\documentclass[10pt]{article}
\usepackage{graphicx} %
\usepackage{amsmath}
\usepackage{amsthm}
\usepackage{amssymb}   
\usepackage{mathrsfs}  
\usepackage{enumitem}  
\usepackage[
    margin=1in
]{geometry}
\usepackage[colorlinks,
            linkcolor=red,
            anchorcolor=blue,
            citecolor=green
            ]{hyperref}
\usepackage{chngcntr}  
\numberwithin{equation}{section}
            
\usepackage{mathtools}

\usepackage{algorithm}
\usepackage{algorithmic}
\usepackage{natbib}

\usepackage{microtype}
\usepackage{graphicx}
\usepackage{subfigure}
\usepackage{booktabs} %
\usepackage{makecell}

\usepackage{hyperref}

\newcommand{\bx}{\mathbf{x}}
\newcommand{\by}{\mathbf{y}}
\newcommand{\bz}{\mathbf{z}}

\newcommand{\bb}{\mathbf{b}}

\newcommand{\bu}{\mathbf{u}}

\newcommand{\bv}{\mathbf{v}}

\newcommand{\bs}{\mathbf{s}}

\newcommand{\gtil}{\widetilde G}
\newcommand{\Et}{\mathbb{E}_t}
\newcommand{\fil}{\mathcal{F}}
\DeclareMathOperator{\gap}{gap}
\DeclareMathOperator{\Reg}{Reg}
\DeclareMathOperator{\proj}{proj}

\newcommand{\R}{\mathbb{R}}

\newcommand{\E}{\mathbb{E}}

\renewcommand{\phi}{\varphi}

\newcommand{\res}{\mathsf{Res}}

\DeclareFontFamily{OT1}{pzc}{}
\DeclareFontShape{OT1}{pzc}{m}{it}{<-> s * [1.200] pzcmi7t}{}
\DeclareMathAlphabet{\mathpzc}{OT1}{pzc}{m}{it}

\DeclareMathOperator{\dist}{dist}

\DeclareMathOperator{\dom}{dom}

\DeclareMathOperator{\prox}{prox}

\newcommand{\inprod}[2]{\left\langle{#1},{#2}\right\rangle}

\usepackage{xcolor,colortbl}

\hypersetup{
  linkcolor  = blue,
  citecolor  = teal,
  colorlinks = true,
}
\makeatletter
\newcommand\blfootnote[1]{%
  \begingroup
  \renewcommand{\@makefntext}[1]{\noindent\makebox[1.8em][r]#1}
  \renewcommand\thefootnote{}\footnote{#1}%
  \addtocounter{footnote}{-1}%
  \endgroup
}
\makeatother

\usepackage[capitalize,noabbrev]{cleveref}

\theoremstyle{plain}
\newtheorem{theorem}{Theorem}[section]

\newtheorem{lemma}[theorem]{Lemma}
\newtheorem{corollary}[theorem]{Corollary}
\theoremstyle{definition}

\newtheorem{assumption}[theorem]{Assumption}
\newtheorem{fact}[theorem]{Fact}

\crefname{fact}{Fact}{Facts}
\Crefname{fact}{Fact}{Facts}

\theoremstyle{remark}
\newtheorem{remark}[theorem]{Remark}

\crefname{assumption}{assumption}{assumptions}
\Crefname{assumption}{Assumption}{Assumptions}

\usepackage[textsize=tiny]{todonotes}

\title{Improving the Last-Iterate Guarantees of Anytime Algorithms for Stochastic Monotone Variational Inequalities}

\author{Jun-Hyun Kim\footnote{University of British Columbia, Vancouver. \url{junhyun@student.ubc.ca}} \and Ahmet Alacaoglu\footnote{Department of Mathematics, University of British Columbia, Vancouver. \url{ahmet.alacaoglu@ubc.ca}}}
\date{}

\begin{document}
\maketitle
\begin{abstract}
We analyze a stochastic algorithm with Halpern-type anchoring for constrained convex-concave problems and monotone variational inequalities. This single-loop and single-call algorithm uses one unbiased sample of the gradient operator at every iteration, to be applicable to monotone games with noisy feedback. With $t$ denoting the iteration counter, we prove an anytime last-iterate convergence rate of $O(t^{-1/4})$ for both the gradient-mapping norm and restricted gap, bypassing the $O(t^{-1/5})$ constrained-anytime bottleneck in the literature. Specializing then to multi-point oracles, we use variance reduction to achieve the $O(t^{-1/2})$ rate with an anytime single-loop algorithm using $2$ samples per iteration.
Our results allow constrained problems with a potentially unbounded feasible set; as well as a structured class of stochastic oracles whose variance need not be uniformly bounded.
\end{abstract}

\section{Introduction}
We focus on the unifying framework of monotone variational inequalities (VIs) that cover such problems as monotone games and convex-concave min-max problems. In particular, we focus on the specific inclusion describing this problem, given as 
\begin{equation}\label{eq:inclusion}
    0\in G(\bz^\star)+\partial r(\bz^\star),
\end{equation}
where $G\colon\mathbb{R}^d\to\mathbb{R}^d$ is monotone, $L$-Lipschitz; and $r\colon\mathbb{R}^d\to \mathbb{R}\cup\{+\infty\}$ is proper, convex, closed.
This is equivalent to the following form of the monotone VI, where the goal is to find $\bz^\star$ such that
\begin{equation}\label{eq: vi_def}
\inprod{G(\bz^\star)}{\bz-\bz^\star}+r(\bz)-r(\bz^\star)\geq0
\text{~for every~} \bz\in\R^d. 
\end{equation}
An important application of \eqref{eq: vi_def} is learning in monotone games \cite{cesa2006prediction}, where Nash equilibria can be characterized as solutions of a monotone VI, see, e.g., \cite{mertikopoulos2019learning}. In this setting, one action profile is played and one noisy payoff feedback is observed at each round. Hence, single-call methods fit this feedback model, whereas two-call methods require an additional gradient evaluation at a different action profile. 

Since the current action profile is the one actually played, the horizon is generally unknown, and we are interested in observing the dynamics of the learning process, last-iterate and anytime guarantees are particularly relevant \cite{cai2023doubly}. As a result, standard results for stochastic VI algorithms that show convergence rates on the averaged iterate or those that use increasing mini-batches, variance reduction, or multi-loop algorithms are not suitable for this setting. Motivated by this setup, in Section \ref{sec: alg_results}, we study an anytime, single-call algorithm with last iterate guarantees.

Another classical application outside of the learning in games setting is constrained optimization
\begin{align}\label{eq: lin_cons_prob}
    \min_{\bx} f(\bx)\colon A\bx\leq \bb  \iff \min_{\bx}\max_{\by\geq 0} f(\bx) + \langle A\bx-\bb, \by \rangle,
\end{align}
where single-call algorithms are not necessary and variance reduced methods are applicable. In particular, we often have a large matrix $A$ from which we sample rows or columns to get unbiased estimates for gradients, which is amenable to the oracle access for variance reduction. An important aspect in this setting is that we need algorithms that can be applied for unbounded constraint sets since neither the primal nor the dual domains are bounded in \eqref{eq: lin_cons_prob}. In Section \ref{sec: storm}, we study a single-loop algorithm suitable for the optimization setting by using variance reduction with $2$ calls per iteration.
\paragraph{Optimality measures.} Two standard optimality measures we will consider are gradient mapping norm, also known as the natural residual, and the restricted gap function \cite{facchinei2003finite}. In particular,
for a fixed $\rho>0$, define the gradient mapping by
\begin{align}\label{eq: gradmap}
    \mathcal G_\rho(\bz)
    =\frac{1}{\rho}\big(\bz-\prox_{\rho r}(\bz-\rho G(\bz))\big). 
\end{align}
The norm $\|\mathcal G_\rho(\bz)\|$ vanishes if and only if
\eqref{eq:inclusion} holds at $\bz$. For simplicity we will set $\rho=\frac{1}{L}$ throughout.
This is an optimality measure generalizing the gradient norm in the unconstrained case and is suitable for problems with unbounded feasible sets, in contrast to the gap we introduce next.

We present the gap function for the special case of $r(\bz) = \delta_{Z}(\bz)$ where $\delta_Z$ is the indicator function for a closed and convex set $Z$:
\begin{align}\label{eq: gap_def}
    \gap(\bz)=\max_{\bu\in Z}\left\{ \langle G(\bz), \bz-\bu \rangle\right\}.
\end{align}
Since this quantity is not suitable when set $Z$ is not bounded, restricted versions of gap functions are often used \cite{nesterov2007dual}. 
For restricted gap functions, they generally take the maximum over a compact set $\mathcal{U}$ and this set needs to satisfy certain requirements for restricted gap functions to be valid optimality measures \cite[Lemma 1]{nesterov2007dual}. 
For example, the set needs to contain the output of the algorithm and a solution. Since it is generally not possible to show the iterates stay bounded for stochastic algorithms, this optimality measure is less meaningful without a bounded domain or for stochastic algorithms. 
On the other hand, when the domain is bounded, such as in matrix games, then the gap functions are well-defined optimality measures.

To compare with some relevant works, we will prove rates for the gap function, which will be a direct consequence of the rates we prove on the natural residual/gradient mapping norm, given in \eqref{eq: gradmap}. The relationship between the gap function and the gradient mapping is formalized in Corollary \ref{cor: resgap} for completeness, which is a standard argument.
\begin{table*}[t!]
\centering
\scriptsize
\begin{tabular}{l l c c c c l l}
\toprule
\textbf{Method}
& \makecell{\textbf{Constraint}}
& \textbf{Anytime} &\textbf{Batch}& \makecell{\textbf{Variance}\\\textbf{Reduction}}&
& \makecell[l]{\textbf{Last-iterate rate}\\\textbf{and measure}}
& \makecell[l]{\textbf{Variance}\\\textbf{assumption}}
\\
\midrule
\cite{abe2025boosting} 
& Bounded
& $\times$ & $2$& $\times$&
& $\widetilde{O}(T^{-1/7})$, gap
& Bounded
\\[3mm]
\hline\\
\cite{zheng2026lastiterate} 
& Unbounded
& $\times$ & $2$ &$\times$&
& $O(T^{-1/4})$, gap
& Bounded$^\ddagger$
\\[3mm]
\cite{zheng2026lastiterate} 
& Unbounded
& $\checkmark$ &$2$&$\times$&
& $O(t^{-1/5})$, gap
& Bounded$^\ddagger$
\\[3mm]
\hline\\
\cite{ito2026last} 
& Bounded
& $\times$ & $1$&$\times$&
& $O(T^{-1/4})$, gap
& \makecell[l]{Bounded }
\\[3mm]
\cite{ito2026last} 
& Bounded
& $\checkmark$ & $1$&$\times$&
& $O(t^{-1/5})$, gap
& \makecell[l]{Bounded }
\\[3mm]
\hline\\
\cite{sohrabi2026accelerated} 
& Unconstrained
& $\checkmark$ & $1$&$\times$&
& $O(t^{-1/4})$, grad. norm
& \eqref{eq: goma_asp}
\\[3mm]
\hline\\
\textbf{Section \ref{sec: alg_results}}
& Unbounded
& $\checkmark$& $1$&$\times$&
& \makecell[l]{
$O(t^{-1/4})$, grad. mapping\\
$O(t^{-1/4})$, gap
}
& Blum-Gladyshev 
\\
\bottomrule
\end{tabular}
\label{tab: table1}
\caption{\small{Comparison of single-loop and anytime stochastic methods with last-iterate guarantees. The rates are stated with $T$ for fixed horizon and $t$ for anytime results. The  Blum-Gladyshev condition is given in \Cref{asp: 2}. $^\ddagger$\cite[Section 2.1]{zheng2026lastiterate} remarked that their analysis extends to a stronger variant of our \Cref{asp: 2}, but did not provide a proof.}}
\end{table*}
\begin{table*}[t!]
\centering
\scriptsize
\begin{tabular}{l l c c c l l}
\toprule
\textbf{Method}
& \makecell{\textbf{Constraint}}
& \textbf{Anytime} &\textbf{Batch}
& \makecell[l]{\textbf{Complexity}\\\textbf{and output}}
& \makecell[l]{\textbf{Variance}\\\textbf{assumption}}
\\
\midrule
\cite{cai2022stochastic} 
& Unconstrained
& $\times$ & incr.
& $\widetilde{O}(\varepsilon^{-3})$, last-iter.
& Bounded
\\[3mm]
\cite{tran2026unbiased} 
& Unbounded
& $\times$ & incr.
& $O(\varepsilon^{-10/3})$, random-iter.
& Bounded
\\[3mm]
\cite{alacaoglu2025towards} 
& Unbounded
& $\times$ & $2$
& $O(\varepsilon^{-4})$, random-iter.
& Blum-Gladyshev
\\[3mm]
\textbf{\textbf{Section \ref{sec: storm}}}
& Unbounded
& $\checkmark$& $2$
& 
$O(\varepsilon^{-2})$, last-iter.
& Blum-Gladyshev
\\
\bottomrule
\end{tabular}
\label{tab: table1}
\caption{\small{Comparison of single-loop methods with last-iterate guarantees and variance reduction.}}
\end{table*}
\paragraph{Related works.} 
In the deterministic case, optimal last iterate guarantees are obtained recently by anytime algorithms \cite{yoon2021accelerated,cai2022accelerated,cai2023doubly} by using anchoring; and suboptimal guarantees were shown in \cite{golowich2020last,gorbunov2022last} for standard VI algorithms, such as the classical extragradient method \cite{korpelevich1976extragradient}. In the game setting, last iterate guarantees are proven in \cite{golowich2020tight} for the optimistic gradient algorithm \cite{popov1980modification,rakhlin2013optimization}. In the stochastic case,
last iterate guarantees without using mini-batches or multi-loop algorithms has been of interest for learning in games with noisy feedback, and many works focused on this direction, including \cite{chen2024last,azizian2021last,abe2025boosting,sohrabi2026accelerated,zheng2026lastiterate,ito2026last,abe2023adaptively,hsieh2022no}.

The literature on the last iterate guarantees for anytime, single-loop algorithms using a single sample (or $2$ samples) at every iteration had recent interesting developments. In particular, for unconstrained problems \cite{sohrabi2026accelerated} showed the $O(t^{-1/4})$ rate for an anchored algorithm. For constrained problems, two parallel works showed the anytime rate $O(t^{-1/5})$ for the restricted gap function \cite{zheng2026lastiterate, ito2026last}. Surprisingly, both of these works concluded the following separation: when we let go of the \emph{anytime} requirement, that is, when the horizon $T$ is known in advance, it was possible to get the rate $O(T^{-1/4})$ for the last iterate where $T$ is the horizon; yet for the anytime case, both works got the worse $O(t^{-1/5})$ rate.

Other single-loop algorithms in the literature are not compatible with the requirements of our setting: for example the developments in \cite{cai2022stochastic} focused on unconstrained problems (for monotone and Lipschitz operators) and used variance reduction along with increasing batches; the works \cite{pethick2023solving,alacaoglu2026solving,alacaoglu2025towards} relied on variance reduction and multi-point oracles and did not prove last iterate guarantees. Classical works obtain optimal rates (in terms of $\varepsilon$ independence) on the gap when it is evaluated at the averaged iterate, see for example \cite{nemirovski2009robust}.

A large body of literature used increasing mini-batch sizes to obtain $O(\varepsilon^{-4})$ complexity \cite{iusem2017extragradient,pethick2023stable,kotsalis2022simple,lee2021fast}. Our complexity in Section \ref{sec: alg_results} matches this by avoiding mini-batches. When we allow increasing mini-batches or multi-loop algorithms, the complexity for residual-type optimality measures was improved to $O(\varepsilon^{-10/3})$ by using mini-batches and variance reduction in the work \cite{tran2026unbiased}, under \Cref{asp: 3}. Only very recently, methods based on multiple loops obtained the order-optimal $\widetilde{O}(\varepsilon^{-2})$ complexity for gradient norm for unconstrained problems \cite{chen2024near} and natural residual and tangent residual in the constrained case \cite{alacaoglu2026make,ji2026computation}. 

To our knowledge, the best complexity with a single-loop method for the residual remained at $O(\varepsilon^{-10/3})$. We obtain the best-known and order optimal rate $O(\varepsilon^{-2})$ under \Cref{asp: 3} with a single-loop algorithm using $2$ unbiased oracles at each iteration. \Cref{asp: 3} is stronger than the assumptions of \cite{alacaoglu2026make,ji2026computation}, however, unlike these works, we use a single-loop method without using increasing mini-batch sizes and multiple loops. \\[2mm]
\textbf{Context and Contributions. } The best anytime rate for the restricted gap stayed at $O\left(t^{-1/5}\right)$ \cite{zheng2026lastiterate,ito2026last}, whereas both works could obtain the horizon-dependent $O(T^{-1/4})$ rate where $T$ is the horizon. At the same time, an anytime $O(t^{-1/4})$ was known for unconstrained problems \cite{sohrabi2026accelerated, zheng2026lastiterate}. The difficulty for extending to the constrained case was discussed in \cite[Remark 4.2]{zheng2026lastiterate}; open questions to derive an anytime $O(t^{-1/4})$ rate for constrained problems were mentioned in \cite[Section 7]{sohrabi2026accelerated} and \cite[Section 6]{zheng2026lastiterate}. More technical details are provided in Section \ref{sec: pointwise}.

In this work, we prove that the separation from \cite{zheng2026lastiterate,ito2026last} can be avoided, that is, we prove the anytime rate $O\left(t^{-1/4}\right)$ for constrained problems. To allow unbounded constraint sets, we show rates on gradient mapping norm (in addition to the gap function), without the bounded variance assumption, by allowing variance to grow as fast as the displacement of the iterates from an initial point. Our algorithm is of stochastic gradient-type, with anchoring (in view of Halpern \cite{halpern1967fixed}), using a single-call of the unbiased operator at every iteration.

Next, we show that by incorporating the STORM variance reduction, we can improve the rate on the last iterate to $O(t^{-1/2})$. The algorithm is still single-loop and anytime, but not single-call anymore since it samples two unbiased oracles. This leads to the complexity $O(\varepsilon^{-2})$ which is optimal in terms of $\varepsilon$-dependence, under the stronger stochastic oracle, see  \cite{foster2019complexity}. This complexity was previously obtained only with multi-loop algorithms \cite{alacaoglu2026make,ji2026computation}.
\subsection{Assumptions}\label{subsec: asp}

Monotonicity and Lipschitz continuity of $G$ mean that, for all $\bx,\by$, we have
\begin{align*}
\langle {G(\bx)-G(\by)},{\bx-\by} \rangle\geq0,
\qquad
\|G(\bx)-G(\by)\|\leq L\|\bx-\by\|.
\end{align*}
We now collect the assumptions made on our main problem.
\begin{assumption}\label{asp: 1}
The function $r\colon\mathbb{R}^d \to \mathbb{R}\cup \{ +\infty \}$ is proper, closed, and convex.
The operator $G:\R^d\to\R^d$ is monotone and $L$-Lipschitz, and the set of solutions to \eqref{eq:inclusion} is nonempty.
\end{assumption}

We fix a deterministic initial point $\bz_0\in \dom r$. Let $\fil_t$ denote
the history before the oracle call at iteration $t$, that is, it is the $\sigma$-algebra generated by the randomness of $\xi_0, \dots, \xi_{t-1}$, where $\bz_t$ is
$\fil_t$-measurable, and we write $\Et[\cdot]=\E[\cdot\mid\fil_t]$.
The oracle returns an unbiased sample whose variance may grow with the
distance from the initial point.

\begin{assumption}\label{asp: 2}
At each iteration $t\geq0$, the sample $\gtil(\bz_t,\xi_t)$ satisfies 
\begin{align}\label{eq: bg}
\Et[\gtil(\bz_t,\xi_t)]=G(\bz_t) ~~~\text{and}~~~
\Et\|\gtil(\bz_t,\xi_t)-G(\bz_t)\|^2
\leq B^2\|\bz_t-\bz_0\|^2+\sigma^2.
\end{align}
\end{assumption}
The case $B=0$ recovers the bounded-variance assumption. Let us compare this to the assumption used in \cite{sohrabi2026accelerated} who required in the unconstrained case
\begin{align}\label{eq: goma_asp}
    \mathbb{E}\| \widetilde{G}(\bz_t, \xi_t) - G(\bz_t)\|^2 \leq \sigma^2 + c^2 \| G(\bz_t)\|^2.
\end{align}
To see that our assumption is weaker, we focus on the unconstrained case where $G(\bz^\star)=0$. Then, this assumption implies by Lipschitzness of $G$:
\begin{align*}
    \|G(\bz_t) \|^2 = \| G(\bz_t)-G(\bz^\star)\|^2 \leq L^2 \| \bz_t-\bz^\star\|^2 \leq 2L^2\left( \| \bz_t - \bz_0\|^2 + \| \bz_0 - \bz^\star\|^2 \right).
\end{align*}
That is, the variance assumption of \cite{sohrabi2026accelerated} implies Assumption \ref{asp: 2}.

The reason for focusing on Assumption \ref{asp: 2} is to support our claims for handling problems without bounded domains. In particular, let us assume that we have an affine operator $G(\bz) = A\bz-\bb$ for $A\in\mathbb{R}^{d\times d}$ and we calculate unbiased oracles as $\gtil(\bz) = dA_{:i}z_i-\bb$ where $i$ is selected uniformly at random and $A_{:i}$ is the $i$-th column. Then, Assumption \ref{asp: 2} holds whereas standard bounded variance or assumptions of the form \eqref{eq: goma_asp} may fail, unless the domain of the problem ($\dom r$ in our notation) is bounded. Since problems with affine $G$ and unbounded domains are common, such as linearly constrained convex optimization, our analysis is based on \Cref{asp: 2} to cover such problems.

\paragraph{Notation.} When clear from the context, we shorten
$\gtil(\bz_t,\xi_t)$ as $\gtil(\bz_t)$. For $\eta>0$, we define the proximal operator as
$\prox_{\eta r}(\bx)
=\operatorname*{argmin}_{\bz\in\R^d}
\left\{r(\bz)+\frac{1}{2\eta}\|\bz-\bx\|^2\right\}$.

The optimality condition for the problem appearing in the definition of the proximal operator, under our assumptions on $r$, gives the well known \emph{prox-inequality}:
\begin{equation}\label{eq:prox-optimality}
    \bz=\prox_{\eta r}(\bx)
    \quad\Longleftrightarrow\quad
    \bx-\bz\in \eta\partial r(\bz) \iff \langle \bz-\bx, \bu-\bz \rangle \geq \eta(r(\bz) - r(\bu)) ~~~\forall \bu \in\mathbb{R}^d.
\end{equation}

\section{A Single-Call Algorithm}\label{sec: alg_results}
Algorithm \ref{alg: alg1} is a simple extension of stochastic gradient descent (or stochastic forward-backward method), when Halpern anchoring is introduced \cite{halpern1967fixed}.
This algorithm appeared many times in the literature, with many different names, see for example \cite{ito2026last,neu2024dealing}.
It was analyzed by \cite{lee2021fast} with increasing mini-batch sizes. This method is a single-call algorithm \`{a} la \cite{hsieh2019convergence}, uses no mini-batch, variance reduction or inner loops.
\begin{algorithm}[h]
\caption{A single-call stochastic Halpern method}
\label{alg: alg1}
\begin{algorithmic}[1]
\REQUIRE Initial point $\bz_0\in \dom r$, $a\geq 2$, $H^2 = L^2+2B^2$. Set $\beta_t=\frac{a}{t+a}$ and $\eta_t=\frac{1}{H(t+a)^{3/4}}$.
\FOR{$t=0,1,\ldots$}
\STATE Obtain an unbiased estimate $\widetilde{G}(\bz_t)$ of $G(\bz_t)$
\STATE $\bz_{t+1}=\prox_{\eta_t r}
        \big((1-\beta_t)\bz_t+\beta_t\bz_0-\eta_t\gtil(\bz_t)\big)$
\ENDFOR
\end{algorithmic}
\end{algorithm}

We now present our main result which is an anytime convergence rate on the gradient mapping, or natural residual, norm. Then, we continue with its implications to anytime rates on gap functions. The proof of the result is given in Appendix \ref{sec: app_alg1}.
\begin{theorem}\label{th:main}
Let \Cref{asp: 1,asp: 2} hold. Then, for every $t\geq1$, \Cref{alg: alg1} satisfies
\begin{align*}
\E\|\mathcal G_{1/L}(\bz_t)\| \leq \sqrt{\E\|\mathcal G_{1/L}(\bz_t)\|^2}
= O\left(\frac{ (L+B)\|\bz_0-\bz^\star\| 
+ \sigma}{t^{1/4}} \right).
\end{align*}
\end{theorem}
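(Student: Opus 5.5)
The plan is to compare the stochastic iterates with an auxiliary deterministic Halpern-type sequence and then convert a bound on the step $\|\bz_{t+1}-\bz_t\|$ into a bound on $\|\mathcal G_{1/L}(\bz_t)\|$.

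First write the update in operator form. Set $\bar\bz_{t+1}=\prox_{\eta_t r}((1-\beta_t)\bz_t+\beta_t\bz_0-\eta_t G(\bz_t))$, the step the method would take with the exact operator. Nonexpansiveness of the prox gives $\|\bz_{t+1}-\bar\bz_{t+1}\|\le \eta_t\|\gtil(\bz_t)-G(\bz_t)\|$. We also need the standard fact that the gradient mapping $\|\mathcal G_\rho(\bz)\|$ is nondecreasing in $1/\rho$ up to constants, and that $\rho\mapsto\rho\|\mathcal G_\rho\|$ is nondecreasing. Combining these, for $\eta_t\le 1/L$,
\begin{align*}
\|\mathcal G_{1/L}(\bz_t)\|\lesssim \frac{1}{\eta_t}\|\bz_t-\prox_{\eta_t r}(\bz_t-\eta_tG(\bz_t))\|\lesssim \frac{\|\bz_t-\bar\bz_{t+1}\|+\beta_t\|\bz_t-\bz_0\|}{\eta_t}.
\end{align*}
It therefore suffices to show $\E\|\bz_t-\bar\bz_{t+1}\|^2=O(\eta_t^2 t^{-1/2})$, i.e. $O(t^{-2})$, and $\beta_t\E\|\bz_t-\bz_0\|/\eta_t = O(t^{-1/4})$.

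\textbf{Step 1: boundedness.} Using the prox-inequality \eqref{eq:prox-optimality} with $\bu=\bz^\star$, monotonicity, and $0\in G(\bz^\star)+\partial r(\bz^\star)$, derive a recursion
\begin{align*}
\E\|\bz_{t+1}-\bz^\star\|^2\le (1-\beta_t)\E\|\bz_t-\bz^\star\|^2+\beta_t\|\bz_0-\bz^\star\|^2+\eta_t^2(B^2\E\|\bz_t-\bz_0\|^2+\sigma^2)+\text{(Lipschitz terms)}.
\end{align*}
Because $\eta_t^2\sim t^{-3/2}$ is summable and is also small relative to $\beta_t\sim t^{-1}$, with $H^2=L^2+2B^2$ absorbing the $B$ and $L$ terms, induction gives $\E\|\bz_t-\bz^\star\|^2\le C(\|\bz_0-\bz^\star\|^2+\sigma^2/H^2)$. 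This yields $\beta_t\|\bz_t-\bz_0\|/\eta_t\sim t^{-1/4}$ times the right constant.

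\textbf{Step 2: successive differences.} This is the main obstacle, because it is exactly where the constrained anytime analyses lost to $t^{-1/5}$. Bound $\bz_{t+1}-\bz_t$ by comparing the prox steps at $t$ and $t-1$. The difficulty is that $\eta_t$ changes with $t$, so the two proxes use different parameters. To handle this, use the resolvent identity $\prox_{\eta r}(\bx)=\prox_{\eta' r}\big(\tfrac{\eta'}{\eta}\bx+(1-\tfrac{\eta'}{\eta})\prox_{\eta r}(\bx)\big)$ to rewrite the step at $t$ with parameter $\eta_{t-1}$. This produces a perturbation of size $(1-\eta_t/\eta_{t-1})\|\cdot\|\sim t^{-1}\cdot(\text{bounded}/\eta)$ instead of an uncontrolled term. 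Then nonexpansiveness, monotonicity and $\eta_t L\le1$ give
\begin{align*}
\E\|\bz_{t+1}-\bz_t\|^2\le(1-\beta_t)\E\|\bz_t-\bz_{t-1}\|^2+O\big((\beta_t-\beta_{t-1})^2D^2+t^{-2}D^2\big)+O(\eta_t^2\sigma_t^2)+\text{cross terms},
\end{align*}
where $D$ is the bound from Step 1.

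The noise enters as $\eta_t^2\sim t^{-3/2}$. Crucially, the cross term between the current and previous noises has zero conditional mean, after conditioning on $\fil_t$ and exploiting independence of $\xi_t$. Solving the recursion with contraction factor $1-a/(t+a)$ and $a\ge2$ gives $\E\|\bz_{t+1}-\bz_t\|^2=O(t^{-3/2})$. Here we should work with $\bar\bz$ (the noiseless step) rather than $\bz_{t+1}$, to get $\E\|\bz_t-\bar\bz_{t+1}\|^2=O(t^{-2})$ plus the conditional noise term. Dividing by $\eta_t^2\sim t^{-3/2}$ gives $O(t^{-1/2})$ for the squared mapping, i.e. $t^{-1/4}$ after taking square roots.

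If the noise in the difference recursion turns out not to cancel cleanly, the fallback is to carry out the recursion for $\bar\bz_{t+1}-\bz_t$ directly, where the fresh noise $\xi_t$ does not appear. The first inequality in the theorem is then Jensen.
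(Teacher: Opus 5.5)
Step~1 and the chain $\|\mathcal G_{1/L}(\bz_t)\|\le\|\mathcal G_{\eta_t}(\bz_t)\|\le(\|\bz_t-\bar\bz_{t+1}\|+\beta_t\|\bz_t-\bz_0\|)/\eta_t$ are valid. The gap is that in the constrained stochastic setting this right-hand side does not tend to zero, so Step~2 is aiming at a false bound. Here is a counterexample: take $d=1$, $r=\delta_{[0,\infty)}$, $G\equiv1$ (so $L=1$, $\bz^\star=0$), $\bz_0=0$, and $\gtil(\bz_t,\xi_t)=1+\epsilon_t$ with $\epsilon_t=\pm2$ equiprobable ($B=0$, $\sigma=2$, $H=1$, $a=2$).
\begin{itemize}
\item Whenever $\epsilon_{t-1}=-2$, which has probability $1/2$, the update gives $z_t=(1-\beta_{t-1})z_{t-1}+\eta_{t-1}\ge\eta_{t-1}$.
\item Then your one-step noiseless point satisfies $z_t-\bar z_{t+1}=\min\{z_t,\beta_tz_t+\eta_t\}\ge\eta_t$, and $\mathcal G_{\eta_t}(z_t)=\min\{z_t/\eta_t,1\}=1$.
\item Hence $\E|z_t-\bar z_{t+1}|^2\ge\eta_t^2/2$ for every $t\ge1$, which is not $O(t^{-2})$.
\end{itemize}
Meanwhile the quantity in the theorem, $\mathcal G_1(z_t)=\min\{z_t,1\}$, does go to zero.

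The information is lost when you trade $\rho=1/L$ for $\rho=\eta_t\to0$. $\mathcal G_{\eta_t}$ resolves the distance to the boundary of $\dom r$ at scale $\eta_t$. That is exactly the scale $\eta_{t-1}\sigma$ of the perturbation that $\xi_{t-1}$ pushes through $\prox_{\eta_{t-1}r}$ into $\bz_t$. In the unconstrained case $\mathcal G_\rho\equiv G$ and this loss disappears, but that is not the case at stake. This is also why the noise cannot ``cancel cleanly'' in your difference recursion. Your fallback does not help either, because $\xi_{t-1}$ still enters $\bar\bz_{t+1}-\bz_t$ nonlinearly through $\bz_t$.

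Separately, the displayed recursion is mis-solved. An additive term of order $\eta_t^2\sigma^2\asymp t^{-3/2}$ (resp.\ your $t^{-2}D^2$) against the contraction $1-\beta_t$ only gives $O(t^{-1/2})$ (resp.\ $O(t^{-1})$), not $O(t^{-3/2})$. Reaching $O(t^{-2})$ would require every additive term to be $O(t^{-3})$.

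The missing idea, which is the paper's route, is to keep $\rho=1/L$ fixed and to run the successive-difference recursion only on a noise-free trajectory. Let $\bar\bz_t$ be the whole deterministic sequence generated by the method with $G$ in place of $\gtil$, started at $\bz_0$, rather than a one-step noiseless update from $\bz_t$.
\begin{itemize}
\item Writing $\bar\bz_{t+1}=\prox_{\eta_{t+1}r}(\bar\bz_{t+1}+\eta_{t+1}\bs_{t+1})$ with $\bs_{t+1}\in\partial r(\bar\bz_{t+1})$ handles the changing stepsize.
\item Monotonicity plus the uniform bound $\|\bar\bz_t-\bz^\star\|\le\frac32\|\bz_0-\bz^\star\|$ give $\|\bar\bz_{t+1}-\bar\bz_t\|\le\frac{5a\|\bz_0-\bz^\star\|}{4(t+a)}$. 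Hence $\|\mathcal G_{1/L}(\bar\bz_t)\|\le\|G(\bar\bz_t)+\bs_t\|=O(H\|\bz_0-\bz^\star\|t^{-1/4})$ (\Cref{lem:boundedness,lem:increments,lem:mapping-comparison}).
\item The noise then enters only the tracking recursion $\E\|\bz_{t+1}-\bar\bz_{t+1}\|^2\le(1-\frac{a}{t+a})\E\|\bz_t-\bar\bz_t\|^2+\eta_t^2(\sigma^2+\frac{25}{2}B^2\|\bz_0-\bz^\star\|^2)$. After nonexpansiveness of the prox, the error is an $\fil_t$-measurable vector plus $\eta_t(\gtil(\bz_t)-G(\bz_t))$. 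So the cross term vanishes exactly and $\E\|\bz_t-\bar\bz_t\|^2=O(t^{-1/2})$ (\Cref{lem:tracking}).
\item Because $\mathcal G_{1/L}$ is $\sqrt2L$-Lipschitz, this $O(t^{-1/4})$ tracking error transfers directly to $\E\|\mathcal G_{1/L}(\bz_t)\|^2=O(t^{-1/2})$.
\end{itemize}
For $\mathcal G_{\eta_t}$, whose Lipschitz constant is of order $\eta_t^{-1}\asymp t^{3/4}$, the same transfer fails, which is precisely the obstruction in the example above.
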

We continue with a direct corollary for a rate on the restricted gap, by using \Cref{cor: resgap}.
\begin{corollary}\label{cor: resgap1}
    Let $r=\delta_Z$ for a convex, closed set $Z$.  We then have
\begin{equation*}
\mathbb{E}[\gap_\mathcal{U}(\bz_t)] = O\left(\frac{1}{t^{1/4}}\right),
\end{equation*}
for any compact set $\mathcal{U}$,
where $\gap_\mathcal{U}$ extends \eqref{eq: gap_def} by taking the maximum over a compact set $\mathcal{U}\subseteq Z$.
\end{corollary}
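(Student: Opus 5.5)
The corollary follows from \Cref{th:main} once we have a pointwise bound of the gap by the gradient mapping norm. That bound is \Cref{cor: resgap}, which I state and use as follows. For $r=\delta_Z$ and $\rho=1/L$, let $\hat\bz=\proj_Z(\bz-\rho G(\bz))$, so that $\mathcal G_\rho(\bz)=(\bz-\hat\bz)/\rho$. The projection inequality gives
\[
\inprod{\hat\bz-\bz+\rho G(\bz)}{\bu-\hat\bz}\ge 0 \quad\text{for all } \bu\in Z.
\]

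Next I decompose the gap term for $\bu\in\mathcal U\subseteq Z$:
\[
\inprod{G(\bz)}{\bz-\bu}=\inprod{G(\bz)}{\bz-\hat\bz}+\inprod{G(\bz)}{\hat\bz-\bu}.
\]
The second term is bounded via the projection inequality by $\frac1\rho\inprod{\hat\bz-\bz}{\bu-\hat\bz}\le\|\mathcal G_\rho(\bz)\|\,\|\bu-\hat\bz\|$. The first term is at most $\rho\|G(\bz)\|\,\|\mathcal G_\rho(\bz)\|$. Hence
\[
\gap_{\mathcal U}(\bz)\le \|\mathcal G_\rho(\bz)\|\Big(\rho\|G(\bz)\|+\max_{\bu\in\mathcal U}\|\bu-\hat\bz\|\Big).
\]
To make the factor depend on $\bz$ only in a controllable way, I bound $\|G(\bz)\|\le\|G(\bz^\star)\|+L\|\bz-\bz^\star\|$ and
\[
\|\bu-\hat\bz\|\le D_{\mathcal U}+\|\bz^\star\|+\|\bz-\bz^\star\|+\rho\|\mathcal G_\rho(\bz)\|,
\]
where $D_{\mathcal U}=\max_{\bu\in\mathcal U}\|\bu\|$, which is finite by compactness. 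This yields
\[
\gap_{\mathcal U}(\bz_t)\le \|\mathcal G(\bz_t)\|\big(C_1+C_2\|\bz_t-\bz^\star\|\big)+\rho\|\mathcal G(\bz_t)\|^2.
\]

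Taking expectations, Cauchy--Schwarz gives
\[
\E[\gap_{\mathcal U}(\bz_t)]\le \sqrt{\E\|\mathcal G(\bz_t)\|^2}\,\Big(C_1+C_2\sqrt{\E\|\bz_t-\bz^\star\|^2}\Big)+\rho\,\E\|\mathcal G(\bz_t)\|^2.
\]
By \Cref{th:main}, the first factor is $O(t^{-1/4})$ and the last term is $O(t^{-1/2})$.

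The main obstacle is a uniform-in-$t$ bound on $\E\|\bz_t-\bz^\star\|^2$, which is not stated in \Cref{th:main} itself. I expect it to come out of the same Lyapunov analysis used in Appendix \ref{sec: app_alg1}, or else I would prove it directly. The direct route is to expand $\|\bz_{t+1}-\bz^\star\|^2$ using nonexpansiveness of the prox and the anchoring with $\beta_t$, use monotonicity to drop the $\inprod{G(\bz_t)}{\bz_t-\bz^\star}$ term, and bound the noise contribution $\eta_t^2(B^2\|\bz_t-\bz_0\|^2+\sigma^2)$. This gives a recursion of the form
\[
e_{t+1}\le(1-\beta_t+c\,\eta_t^2)\,e_t+\beta_t\|\bz_0-\bz^\star\|^2+O(\eta_t^2),
\]
with $e_t=\E\|\bz_t-\bz^\star\|^2$. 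Since $\eta_t^2H^2=(t+a)^{-3/2}\ll\beta_t$ and $\sum_t\eta_t^2<\infty$, induction gives $e_t\le C\big(\|\bz_0-\bz^\star\|^2+\sigma^2/H^2\big)$. If that recursion turns out to be delicate, I would first try absorbing $c\,\eta_t^2e_t$ into $\beta_t e_t/2$ for $t$ large, which holds because $a\ge2$ and $H^2\ge 2B^2$.
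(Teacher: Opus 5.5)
Your proposal is correct and follows the paper's proof: the same pointwise bound of $\gap_{\mathcal U}$ by $\|\mathcal G_\rho\|$ as in \Cref{cor: resgap} (the paper centers it at $\bz_0$, you center it at $\bz^\star$), followed by Cauchy--Schwarz and \Cref{th:main}. The uniform moment bound you flag as the main obstacle is already available: \Cref{lem:tracking} gives $\E\|\bz_t-\bz_0\|^2\le\frac{25}{2}\|\bz_0-\bz^\star\|^2+2\E\|\bz_t-\bar\bz_t\|^2$ with $\E\|\bz_t-\bar\bz_t\|^2=O(1/\sqrt{t+a})$, hence $\sup_t\E\|\bz_t-\bz^\star\|^2<\infty$, so your separate Lyapunov recursion is valid but not needed.
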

When we specialize to min-max problems, we can then particularize this result to a rate on the restricted primal-dual gap used in \cite{zheng2026lastiterate}. This result is given in the appendix as Corollary \ref{cor: minimaxgap} for direct comparison to this work.

Finally, we convert the convergence rate guarantees to iteration and sample complexity results.
\begin{corollary}\label{eq: complexity_cor}
    Under the same setup as Theorem \ref{th:main}, to obtain $\mathbb{E}\|\mathcal{G}_{\rho}(\bz_t)\|\leq\varepsilon$ or $\mathbb{E}\gap_{\mathcal{U}}(\bz_t)\leq\varepsilon$ on the last iterate $\bz_t$, the required total number of stochastic oracles is of the order $O(\varepsilon^{-4})$.
\end{corollary}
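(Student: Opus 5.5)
This corollary follows by inverting the rates already established, so my plan is to read off $t$ from Theorem \ref{th:main} and Corollary \ref{cor: resgap1} and then count oracle calls. Theorem \ref{th:main} gives, for every $t\geq1$,
\[
\E\|\mathcal G_{1/L}(\bz_t)\| \leq \frac{C_1}{t^{1/4}}, \qquad C_1 = c\bigl((L+B)\|\bz_0-\bz^\star\|+\sigma\bigr),
\]
for an absolute constant $c$ hidden in the $O(\cdot)$. Requiring $C_1 t^{-1/4}\leq\varepsilon$ gives the sufficient condition $t\geq (C_1/\varepsilon)^4$. Because the guarantee is anytime, any iteration index at least $\lceil (C_1/\varepsilon)^4\rceil$ works, and no horizon needs to be fixed in advance. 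The same argument applies to the gap: Corollary \ref{cor: resgap1} gives $\E[\gap_{\mathcal U}(\bz_t)]\leq C_2 t^{-1/4}$, where $C_2$ depends on $\mathcal U$ (through its diameter or radius and the Lipschitz and problem constants) as in Corollary \ref{cor: resgap}. This yields $t\geq (C_2/\varepsilon)^4$.

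The second step is counting oracles. Algorithm \ref{alg: alg1} is single-call, since line 2 queries $\gtil$ exactly once per iteration. Reaching $\bz_t$ therefore costs exactly $t$ stochastic oracle calls, and the total is $O(\varepsilon^{-4})$ with constants $C_1^4$ or $C_2^4$.

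The only point needing care is that the statement is phrased with a general $\rho$, while Theorem \ref{th:main} uses $\rho=1/L$. I would handle this with the standard fact that the natural residual is comparable across step sizes, up to constants depending on $\rho L$ and $\max(\rho,1/\rho)$. The usual form of this fact is that $\rho\|\mathcal G_\rho(\bz)\|$ is nondecreasing in $\rho$ and $\|\mathcal G_\rho(\bz)\|$ is nonincreasing in $\rho$. Equivalently, the statement intends $\rho=1/L$ as fixed earlier in the paper. In either reading, the comparison changes only the constant and not the $\varepsilon$-dependence, so this step is not a real obstacle and the result is immediate.
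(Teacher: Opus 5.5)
Your proposal is correct and matches the paper, which omits the proof as immediate. The argument is to invert the anytime $O(t^{-1/4})$ bounds of Theorem \ref{th:main} and Corollary \ref{cor: resgap1} to get $t=O(\varepsilon^{-4})$, and then use that Algorithm \ref{alg: alg1} makes exactly one oracle call per iteration. Your handling of a general $\rho$ through the monotonicity of $\rho\mapsto\|\mathcal G_\rho(\bz)\|$ and $\rho\mapsto\rho\|\mathcal G_\rho(\bz)\|$ is valid. Alternatively, Lemma \ref{lem:mapping-comparison} holds for every $\rho>0$, so the proof of Theorem \ref{th:main} goes through verbatim with a $\rho$-dependent constant.
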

The proof of Corollaries \ref{cor: resgap1} and \ref{cor: minimaxgap} are given in Appendix \ref{sec: def_proofs}, and the proof of Corollary \ref{eq: complexity_cor} is omitted since it is immediate. 

We continue with the proof of Theorem \ref{th:main}.
The structure of our proof is inspired by the simple and powerful idea that first appeared in the work \cite{sohrabi2026accelerated} and then used also for the unconstrained result of \cite{zheng2026lastiterate}. The idea is to first analyze the deterministic shadow sequence given in \eqref{eq:reference-update} and then provide a pointwise bound between the deterministic sequence and the original stochastic sequence.

Instead of the gradient norm used in these works, we use the gradient mapping, which satisfies the necessary Lipschitz properties and then we show that the gradient mapping norm majorizes the gap function, to go around the difficulty of working directly with the gap function.

\subsection{Application for Learning in Games}
We now apply our anytime guarantee for monotone games \cite{cesa2006prediction}. Following the setup in \cite{cai2023doubly,ito2026last}, we have $N$ players where $i$-th player is denoted as $z_i$ and $\bz=(z^i; z^{-i})=(z^1, \dots, z^N)$.  Each player selects actions from sets $Z^i$ which are assumed to be compact, and $Z=Z_1\times \dots\times Z_N$. Each player minimizes a convex loss function $\ell_i(z^i; x^{-i})$. The operator $G$ will be defined as
\begin{equation*}
    G(\bz) = \begin{bmatrix}
        \nabla_{z_1} \ell_1(\bz) \\ \vdots \\\nabla_{z_N} \ell_N(\bz)
    \end{bmatrix}.
\end{equation*}
We assume access to noisy estimates of $G(\bz)$ such that $\widetilde{G}(\bz_t) = G(\bz_t) + \zeta_t$ where $\mathbb{E}[\zeta_t]=0$ and $\mathbb{E} \|\zeta_t\|^2 \leq B^2 \| \bz_t - \bz_0\|^2 + \sigma^2$. Even though this allows handling problems without bounded feasible sets, for this section, we assume that $Z$ is compact.

For direct comparison with \cite{ito2026last}, we define the constants
\begin{align*}
D=\max_{\bx,\by\in Z}\|\bx-\by\|,
\qquad
U=\max_{\bz\in Z}\|G(\bz)\|.
\end{align*}
Set $a=2$ in Algorithm \ref{alg: alg1} and define, for $\bu \in \dom r$, the regret
$
\Reg_T (\bu)
= \sum_{t=0}^{T-1} \langle G(\bz_t),\bz_t-\bu\rangle.$

A direct consequence of Theorem \ref{th:main}, Corollary \ref{cor: resgap1}, and \cite[Section 7]{ito2026last} (by selecting parameters as this work)  is that we obtain the convergence and regret guarantees
\begin{align}
\E\gap(\bz_t)
= O\left( \frac{D(LD+U+\sigma)}{t^{1/4}} \right)
~~~
\text{and}
~~~
\E\Reg_T(\bu) = O\left( D(LD+U+\sigma)T^{3/4} \right). \label{eq: comp}
\end{align}
\begin{remark}
Under the same bounded-domain and bounded-variance setting, \cite[Theorem~5.1]{ito2026last} obtained the gap bound $O(D(LD+U+\sigma)t^{-1/5})$. Thus our
result improves the anytime rate from $O(t^{-1/5})$ to $O(t^{-1/4})$. Our analysis also improves the regret guarantee obtained for the anytime algorithm in \cite{ito2026last} which was $O(T^{4/5})$ to $O(T^{3/4})$. Of course, the regret guarantee is suboptimal, however, the aim is to prove that Algorithm \ref{alg: alg1} has sublinear regret, while improving the last iterate anytime guarantee for approaching a Nash equilibrium.
\end{remark}

We now continue with the details of the analysis, following the high-level description in Section \ref{sec: alg_results}.
\subsection{Convergence of the Deterministic Sequence}

Starting from $\bar{\bz}_0:=\bz_0$, define the deterministic reference iterates by
\begin{equation}\label{eq:reference-update}
    \bar{\bz}_{t+1}
    =\prox_{\eta_t r}\big(
        (1-\beta_t)\bar{\bz}_t+\beta_t\bz_0
        -\eta_tG(\bar{\bz}_t)\big).
\end{equation}
These iterates are used only in the analysis. This sequence is indeed the algorithm analyzed by \cite{cai2026lastiterate}, referred to as the anchored gradient descent. We follow the arguments of
\cite{cai2026lastiterate}, by adapting the parameter choices. Their $t^{-1/2}$ stepsize is replaced here by
$t^{-3/4}$; the estimates below require only $H\geq L$.

The first result proves the uniform boundedness of this deterministic sequence. The proof is deferred to Appendix \ref{subsec: lem31}.
\begin{lemma}\label{lem:boundedness}
Under \Cref{asp: 1}, for the iterates generated by \eqref{eq:reference-update}, we have for any $t\geq 0$ that
\begin{equation}\label{eq:reference-bounds}
    \|\bar{\bz}_t-\bz^\star\|
    \leq\frac32\|\bz_0-\bz^\star\|,
    \qquad
    \|\bar{\bz}_t-\bz_0\|\leq \frac52 \|\bz_0-\bz^\star\|.
\end{equation}
\end{lemma}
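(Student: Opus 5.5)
The plan is to prove the first bound by induction on $t$, using a one-step recursion for $d_t:=\|\bar{\bz}_t-\bz^\star\|$; the second bound then follows from the triangle inequality $\|\bar{\bz}_t-\bz_0\|\le d_t+\|\bz_0-\bz^\star\|\le \frac52\|\bz_0-\bz^\star\|$. Write $D:=\|\bz_0-\bz^\star\|$. Since $\bz^\star$ solves \eqref{eq:inclusion}, the prox characterization \eqref{eq:prox-optimality} gives $\bz^\star=\prox_{\eta_t r}(\bz^\star-\eta_t G(\bz^\star))$ for every $t$. Nonexpansiveness of $\prox_{\eta_t r}$ then yields
\begin{align*}
d_{t+1}\le \big\|(1-\beta_t)(\bar{\bz}_t-\bz^\star)+\beta_t(\bz_0-\bz^\star)-\eta_t\big(G(\bar{\bz}_t)-G(\bz^\star)\big)\big\|.
\end{align*}

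For $t\ge1$ I split off the anchor term with the triangle inequality. Set $\bz=\bar{\bz}_t-\bz^\star$ and $\Delta=G(\bar{\bz}_t)-G(\bz^\star)$. Monotonicity gives $\langle \bz,\Delta\rangle\ge0$, and Lipschitz continuity gives $\|\Delta\|\le L\|\bz\|$. Expanding the square therefore gives
\begin{align*}
\|(1-\beta_t)\bz-\eta_t\Delta\|^2\le \big((1-\beta_t)^2+\eta_t^2L^2\big)\|\bz\|^2,
\end{align*}
and hence
\begin{align*}
d_{t+1}\le q_t\, d_t+\beta_t D,\qquad q_t:=\sqrt{(1-\beta_t)^2+\eta_t^2L^2}.
\end{align*}
For $t=0$ I do not split, because $\beta_0=1$ and $\bar{\bz}_0=\bz_0$. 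The two vectors inside the norm then coincide, and the same monotonicity argument gives $d_1\le\sqrt{1+\eta_0^2L^2}\,D\le\sqrt{1+a^{-3/2}}\,D\le \frac32 D$, using $\eta_0L\le a^{-3/4}$ (since $H\ge L$) and $a\ge2$.

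The inductive step asks that $d_t\le\frac32D$ imply $\frac32 q_t+\beta_t\le\frac32$, i.e.\ $q_t\le 1-\frac23\beta_t$. Squaring, this becomes
\begin{align*}
\eta_t^2L^2\le \tfrac23\beta_t\Big(2-\beta_t-\tfrac23\beta_t\Big)=\tfrac23\beta_t\Big(2-\tfrac53\beta_t\Big).
\end{align*}
With $\eta_t^2L^2\le (t+a)^{-3/2}$ and $\beta_t=a/(t+a)$, it suffices that $(t+a)^{-1/2}\le \frac23 a\big(2-\frac{5a}{3(t+a)}\big)$.

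The main obstacle is verifying this elementary inequality at the smallest cases, $t=1$ with $a=2$ or $a$ close to $2$, where the margin is thin. A crude bound such as $\sqrt{x^2+y^2}\le x+y^2/(2x)$ loses too much there, so I would keep the exact square root. At $t=1$, $a=2$ the right side is $\frac43\cdot\frac{8}{9}\approx1.19$, versus the left side $3^{-1/2}\approx0.58$, so the inequality holds. For larger $t$ the right side increases towards $\frac43 a$ while the left side decreases. For larger $a$, the right side grows linearly in $a$ (note $\frac{5a}{3(t+a)}\le\frac53\beta_t<\frac53$) while the left side shrinks. So a monotonicity-in-$t$ argument plus the check at $t=1$ should close it.

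Nothing beyond $H\ge L$ is used, which matches the remark preceding \Cref{lem:boundedness}.
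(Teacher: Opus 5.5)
Your proof follows the paper's. It uses the same recursion $d_{t+1}\le\sqrt{(1-\beta_t)^2+\eta_t^2L^2}\,d_t+\beta_tD$ for $t\ge1$, obtained from prox-nonexpansiveness and \Cref{lem: normsum}. It handles $t=0$ without splitting, exactly as the paper does; this is needed, since splitting there gives only $(1+\eta_0L)D$, which can reach $(1+2^{-3/4})D>\frac32D$. It then uses the contraction $q_t\le1-\frac23\beta_t$, which is the first estimate of \Cref{fact: sces}, and the triangle inequality for the second bound.

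There is an arithmetic slip in the step you identify as delicate. The correct difference is $(1-\tfrac23\beta_t)^2-(1-\beta_t)^2=\tfrac13\beta_t\bigl(2-\tfrac53\beta_t\bigr)$, not $\tfrac23\beta_t\bigl(2-\tfrac53\beta_t\bigr)$. So the condition to verify is
\begin{equation*}
(t+a)^{-1/2}\le \tfrac13\, a\Bigl(2-\tfrac{5a}{3(t+a)}\Bigr).
\end{equation*}
At $t=1$, $a=2$ the right side is $16/27\approx0.593$, not $1.19$, compared with $1/\sqrt3\approx0.577$. The inequality still holds, but with a margin under $3\%$. As written, your check verifies a bound twice as large as the one needed, so it does not by itself give the contraction. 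Your instinct that the margin is thin was correct; your arithmetic hid it.

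With the corrected constant, your monotonicity reduction to $(t,a)=(1,2)$ works. The right side increases in $t$. It also increases in $a$, because
\begin{equation*}
\partial_a\Bigl[a\Bigl(2-\tfrac{5a}{3(t+a)}\Bigr)\Bigr]=2-\tfrac{5(a^2+2at)}{3(t+a)^2}\ge\tfrac13.
\end{equation*}
Your phrase ``grows linearly in $a$'' needs this computation, since $\beta_t$ also increases with $a$.

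The paper's reduction is a little cleaner. Using $t\ge1$, it bounds $\tfrac{2a}{3}-\tfrac{5a^2}{9(t+a)}-\tfrac{1}{\sqrt{t+a}}\ge\tfrac{a(a+6)}{9(a+1)}-\tfrac{1}{\sqrt{a+1}}$. It then needs only the one-variable inequality $a(a+6)\ge9\sqrt{a+1}$ for $a\ge2$, which at $a=2$ reads $16$ versus $9\sqrt3\approx15.6$.
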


Let us remark that using the ideas from \cite{cai2026lastiterate}, we get a guarantee on the optimality measure $\|G(\bz)+\bs\|$ where $\bs\in\partial r(\bz)$. However, we later use this bound to transfer the guarantee to the gradient mapping norm since the measure above does not admit the Lipschitzness properties required for the analysis. 

This is definitely an \emph{unsurprising} result since it is now classical. Indeed, starting from the work of \cite{yoon2021accelerated} which is extended to the constrained case by \cite{kovalev2022first,cai2022accelerated,cai2022acceleratedb}, it is known how to prove that the residual has the last iterate rate of $O(1/k)$ (with more sophisticated algorithms). In our case, since we are limited by the pointwise bounds in Section \ref{sec: pointwise}, we seek a simple bound for this sequence that will be \emph{just} sufficient for our purposes with the parameter choices for $\beta_t, \eta_t$. The proof, following \cite{cai2026lastiterate} is given in Appendix \ref{subsec: lem32}.
\begin{lemma}
\label{lem:increments}
Let \Cref{asp: 1} hold. For every $t\geq0$, we have
\begin{equation}\label{eq:increment-bound}
    \|\bar{\bz}_{t+1}-\bar{\bz}_t\|
    \leq\frac{5a\|\bz_0-\bz^\star\|}{4(t+a)}.
\end{equation}
Moreover, for every $t\geq1$, we find
$\bs_t\in\partial r(\bar{\bz}_t)$ such that
\begin{equation}\label{eq:tangent-residual-bound}
\|G(\bar{\bz}_t)+\bs_t\|
\leq\frac{5aH\|\bz_0-\bz^\star\|}{2(t-1+a)^{1/4}}
\left(\frac32+\frac{L}{2H(t-1+a)^{3/4}}\right).
\end{equation}
\end{lemma}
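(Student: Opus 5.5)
We prove the increment bound \eqref{eq:increment-bound} first, by induction on $t$, and then read off the residual bound \eqref{eq:tangent-residual-bound} from the prox optimality condition \eqref{eq:prox-optimality}.

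\textbf{Increment bound.} Write $T_t(\bz)=(1-\beta_t)\bz+\beta_t\bz_0-\eta_t G(\bz)$, so that $\bar\bz_{t+1}=\prox_{\eta_t r}(T_t(\bar\bz_t))$. For $t\ge1$, subtract consecutive updates and use nonexpansiveness of the prox. Since $\eta_t$ varies with $t$, compare through the resolvent identity, or bound the difference of prox operators with different parameters via \eqref{eq:prox-optimality}. The cleaner route is to test \eqref{eq:prox-optimality} at iterations $t$ and $t-1$ against each other's outputs and add the two inequalities. Monotonicity of $G$ is then used through the standard splitting
\[
\|(1-\beta_t)(\bx-\by)-\eta_t(G(\bx)-G(\by))\|^2\le (1-\beta_t)^2\|\bx-\by\|^2+\eta_t^2L^2\|\bx-\by\|^2 ,
\]
which holds because the cross term is nonnegative. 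Together with $\eta_t L\le (t+a)^{-3/4}$ (from $H\ge L$), this gives a contraction factor $\approx 1-\beta_t+O(\eta_t^2L^2)$.

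The perturbation terms come from $\beta_t-\beta_{t-1}=O(a/(t+a)^2)$ multiplying $\|\bar\bz_{t-1}-\bz_0\|\le\frac52\|\bz_0-\bz^\star\|$ (\Cref{lem:boundedness}), and from $\eta_{t-1}-\eta_t=O(\eta_t/(t+a))$. The latter multiplies the quantity $\|G(\bar\bz_{t-1})+\bs_{t}\|$-type residual. I would rather control it by $\|\bar\bz_t-\bar\bz_{t-1}\|/\eta_{t-1}$ plus $\beta_{t-1}\|\bar\bz_{t-1}-\bz_0\|/\eta_{t-1}$, which again follows from the prox identity.

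These bounds produce a recursion of the form $\Delta_{t+1}\le(1-\frac{a}{t+a}+\frac{c}{t+a})\Delta_t+\frac{C a\|\bz_0-\bz^\star\|}{(t+a)^2}$. The induction hypothesis $\Delta_t\le\frac{5a\|\bz_0-\bz^\star\|}{4(t-1+a)}$ then closes, using $a\ge2$. The base case $t=0$ follows from $\|\bar\bz_1-\bz_0\|\le \frac52\|\bz_0-\bz^\star\|$ (since $\beta_0=1$), together with $\frac{5a}{4a}=\frac54$. For the base case I would instead directly compare with $\bz^\star$, using $\bz^\star=\prox_{\eta r}(\bz^\star-\eta G(\bz^\star))$, to get the sharper constant.

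\textbf{Residual bound.} By \eqref{eq:prox-optimality} applied at step $t-1$, the vector
\[
\bs_t:=\frac{1}{\eta_{t-1}}\big((1-\beta_{t-1})\bar\bz_{t-1}+\beta_{t-1}\bz_0-\eta_{t-1}G(\bar\bz_{t-1})-\bar\bz_t\big)
\]
lies in $\partial r(\bar\bz_t)$. Hence
\[
G(\bar\bz_t)+\bs_t=G(\bar\bz_t)-G(\bar\bz_{t-1})+\frac{1}{\eta_{t-1}}\big(\bar\bz_{t-1}-\bar\bz_t+\beta_{t-1}(\bz_0-\bar\bz_{t-1})\big).
\]
Taking norms:
\begin{itemize}[leftmargin=*]
\item the first term is at most $L\Delta_{t}$;
\item the second term is at most $\Delta_t/\eta_{t-1}=H(t-1+a)^{3/4}\Delta_t$;
\item the third term is at most $\beta_{t-1}\frac52\|\bz_0-\bz^\star\|/\eta_{t-1}$.
\end{itemize}
Plugging in $\Delta_t\le\frac{5a\|\bz_0-\bz^\star\|}{4(t-1+a)}$ and $\beta_{t-1}=\frac{a}{t-1+a}$ gives
\[
\frac{5aH\|\bz_0-\bz^\star\|}{2(t-1+a)^{1/4}}\left(\frac12+1+\frac{L}{2H(t-1+a)^{3/4}}\right),
\]
which is exactly \eqref{eq:tangent-residual-bound}.

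\textbf{Main obstacle.} The hard step is the increment recursion with time-varying $\eta_t$. The difference-of-prox term has to be absorbed without losing the $1/(t+a)$ rate, which requires checking that the mismatch $\eta_{t-1}/\eta_t-1=O(1/t)$ contributes at most a $c/(t+a)$ degradation with $c<a-1$.
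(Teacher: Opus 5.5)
\textbf{There is a genuine gap: the increment induction, as you set it up, does not close with the stated constant $\frac{5a}{4}$.} Your residual bound is the paper's computation, but the induction step fails for $a$ near $2$.

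Compare consecutive steps correctly. Write $\bar\bz_t=\prox_{\eta_t r}(\bar\bz_t+\eta_t\bs_t)$ as the paper does, or equivalently scale the step-$(t-1)$ prox inequality by $\eta_t/\eta_{t-1}$ before adding (otherwise the $r$-terms do not cancel). This bounds $\|\bar\bz_{t+1}-\bar\bz_t\|$ by the norm of
\[
\Big(\frac{\eta_t}{\eta_{t-1}}-\beta_t\Big)(\bar\bz_t-\bar\bz_{t-1})-\eta_t\big(G(\bar\bz_t)-G(\bar\bz_{t-1})\big)+\Big(\beta_t-\frac{\eta_t}{\eta_{t-1}}\beta_{t-1}\Big)(\bz_0-\bar\bz_{t-1}).
\]
You instead take $1-\beta_t$ as the contraction coefficient. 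You then treat $(\beta_{t-1}-\beta_t)(\bar\bz_{t-1}-\bz_0)$ and $(\eta_{t-1}-\eta_t)(G(\bar\bz_{t-1})+\bs_t)$ as separate perturbations, bounding the latter by $(1-\frac{\eta_t}{\eta_{t-1}})(\Delta_t+\beta_{t-1}\|\bar\bz_{t-1}-\bz_0\|)$. This loses in two places.

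First, since $\eta_t$ decreases, the mismatch actually helps. The true coefficient is $\frac{\eta_t}{\eta_{t-1}}-\beta_t\le 1-\frac{a+3/4}{t+a}$, whereas yours is at least $1-\frac{a-3/4}{t+a}$.

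Second, the anchor coefficients nearly cancel. $\beta_t-\frac{\eta_t}{\eta_{t-1}}\beta_{t-1}$ is the sum of $\beta_t-\beta_{t-1}=-\frac{a}{(t-1+a)(t+a)}$ and $(1-\frac{\eta_t}{\eta_{t-1}})\beta_{t-1}\ge\frac{3a}{4(t-1+a)(t+a)}$. Its absolute value is therefore at most $\frac{a}{4(t-1+a)(t+a)}$, while your triangle inequality charges at least $\frac{7a}{4(t-1+a)(t+a)}$.

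With $D=\|\bz_0-\bz^\star\|$, $s=t+a$, and $\Delta_t=\|\bar\bz_t-\bar\bz_{t-1}\|$, your recursion is at best
\[
\Delta_{t+1}\le\Big(1-\frac{a-3/4}{s}\Big)\Delta_t+\frac{35aD}{8(s-1)s}.
\]
Propagating $\Delta_t\le\frac{KaD}{s-1}$ to $\Delta_{t+1}\le\frac{KaD}{s}$ requires $K(a-\frac74)\ge\frac{35}{8}$. For $K=\frac54$ this means $a\ge\frac{21}{4}$, so the claim that the induction closes ``using $a\ge2$'' is false. Worse, at $a=2$, $t=1$ and $L=H$, your coefficient in front of $\Delta_1$ is $\sqrt{1/9+3^{-3/2}}+1-(2/3)^{3/4}\approx 0.81>\frac23$. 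So your recursion cannot propagate any bound of the form $\frac{KaD}{t-1+a}$ from $t=1$ to $t=2$. Your residual bound plugs in exactly $\frac{5a}{4}$, so it inherits this gap.

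The missing idea is to keep the stepsize mismatch inside the contraction. Apply \Cref{lem: normsum} with $\alpha=\frac{\eta_t}{\eta_{t-1}}-\beta_t\ge0$ and $\gamma=\eta_t$. Together with $\eta_tL\le(t+a)^{-3/4}$, this gives the factor $1-\frac{3}{2(t+a)}$. Then bound only the anchor term separately, using the cancelled coefficient. The recursion becomes $\Delta_{t+1}\le(1-\frac{3}{2s})\Delta_t+\frac{5aD}{8(s-1)s}$, which propagates $\frac{5aD}{4(s-1)}$ to exactly $\frac{5aD}{4s}$.

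The base case also needs care. If ``compare with $\bz^\star$'' means nonexpansiveness plus the triangle inequality, you only get $\|\bar\bz_1-\bz_0\|\le(1+\sqrt{1+a^{-3/2}})D$, which exceeds $\frac54 D$. The paper instead proceeds as follows:
\begin{enumerate}
\item It tests the prox inequality at $\bu=\bz^\star$ and uses the three-point identity.
\item It uses the VI at $\bz^\star$ to discard the $r$-terms.
\item It keeps the term $-\|\bar\bz_1-\bz^\star\|^2$ to absorb the cross term via Young's inequality.
\end{enumerate}
This yields $\|\bar\bz_1-\bz_0\|^2\le(1+\eta_0^2L^2)D^2\le\frac{25}{16}D^2$.
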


\subsection{Pointwise Bounds between Stochastic and Deterministic Sequences}\label{sec: pointwise}
We follow the stochastic comparison argument of \cite{sohrabi2026accelerated}, bounding the distance between the stochastic and deterministic iterates. For the constrained problem, we adapt the argument using Lipschitzness of the gradient mapping (which is due to the nonexpansiveness of the proximal operator), under \Cref{asp: 2}. Since this variance assumption is weaker than the assumption in \cite{sohrabi2026accelerated} (see Section \ref{subsec: asp}), our result extends this work even in the unconstrained case. 

Interestingly, a similar analysis was used in \cite[Section 5.4]{zheng2026lastiterate} in the unconstrained case, but not in the constrained case where the strategy was characterizing the solution of a perturbed problem, similar to \cite{ito2026last}. Both works got the $O(t^{-1/5})$ anytime rate for the gap.

Our insight for the constrained extension departs from \cite[Section 7]{sohrabi2026accelerated} and \cite[Remark 4.2]{zheng2026lastiterate} in that we do not directly analyze the gap function in the constrained setting, but go though the gradient mapping. The advantages are three-fold: (1) gradient mapping norm is Lipschitz; (2) gradient mapping norm is a classical optimality measure for problems without bounded constraint sets, making our result valid for problems with unbounded constraint sets; (3) since gradient mapping norm majorizes the (restricted) gap, the anytime rate we prove for the gradient mapping norm directly transfers to the same rate on the gap. Proof is given in Appendix \ref{subsec: lem33}.
\begin{lemma}\label{lem:tracking}
Under \Cref{asp: 1,asp: 2}, for every $t\geq0$, we have
\begin{equation}\label{eq:tracking-rate}
\E\|\bz_t-\bar{\bz}_t\|^2
\leq \frac{\sigma^2+\frac{25}{2}B^2\|\bz_0-\bz^\star\|^2}{H^2(a-\frac12)\sqrt{t+a}} \text{~and~} \E\|\bz_t-\bz_0\|^2
\leq
\frac{25}{2}\|\bz_0-\bz^\star\|^2
+2\mathbb{E}\|\bar\bz_t - \bz_t\|^2.
\end{equation}
\end{lemma}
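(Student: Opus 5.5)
The natural approach is to track the error $\mathbf{e}_t := \bz_t-\bar\bz_t$ through a one-step recursion in conditional expectation, then close it by induction against the target $K/\sqrt{t+a}$ with $K:=\frac{\sigma^2+\frac{25}{2}B^2\|\bz_0-\bz^\star\|^2}{H^2(a-\frac12)}$.

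Write $\zeta_t:=\gtil(\bz_t)-G(\bz_t)$. Both $\bz_{t+1}$ and $\bar\bz_{t+1}$ are images under the same map $\prox_{\eta_t r}$, which is nonexpansive. Hence
\begin{equation*}
\|\mathbf{e}_{t+1}\|\le \big\|(1-\beta_t)\mathbf{e}_t-\eta_t\big(G(\bz_t)-G(\bar\bz_t)\big)-\eta_t\zeta_t\big\|.
\end{equation*}
Square this and apply $\Et$. Since $\mathbf{e}_t$, $\bz_t$ and $\bar\bz_t$ are $\fil_t$-measurable and $\Et\zeta_t=0$, the cross term with $\zeta_t$ vanishes. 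Expanding the deterministic part, the term $-2(1-\beta_t)\eta_t\langle \mathbf{e}_t,G(\bz_t)-G(\bar\bz_t)\rangle$ is nonpositive by monotonicity. Lipschitzness gives $\eta_t^2\|G(\bz_t)-G(\bar\bz_t)\|^2\le \eta_t^2L^2\|\mathbf{e}_t\|^2$.

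For the noise term, use \Cref{asp: 2}. Then $\|\bz_t-\bz_0\|^2\le 2\|\mathbf{e}_t\|^2+2\|\bar\bz_t-\bz_0\|^2$, and \Cref{lem:boundedness} bounds the last term by $\frac{25}{4}\|\bz_0-\bz^\star\|^2$. With $H^2=L^2+2B^2$ and $\eta_t^2H^2=(t+a)^{-3/2}$, taking total expectation yields
\begin{equation*}
\E\|\mathbf{e}_{t+1}\|^2\le\Big((1-\beta_t)^2+(t+a)^{-3/2}\Big)\E\|\mathbf{e}_t\|^2+\frac{C}{H^2(t+a)^{3/2}},\qquad C:=\sigma^2+\tfrac{25}{2}B^2\|\bz_0-\bz^\star\|^2 .
\end{equation*}

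Next, prove $\E\|\mathbf{e}_t\|^2\le K/\sqrt{t+a}$ by induction. The base case is $\mathbf{e}_0=0$. The first step is handled directly: $\beta_0=1$ gives $\E\|\mathbf{e}_1\|^2\le C/(H^2a^{3/2})$, and one checks $a^{-3/2}\le \frac{1}{(a-\frac12)\sqrt{a+1}}$ for $a\ge2$. For general $t$, set $s=t+a$ and multiply the required inequality by $\sqrt s$. It then reads
\begin{equation*}
\Big(1-\frac as\Big)^2+s^{-3/2}+\frac{a-\frac12}{s}\le\sqrt{\frac{s}{s+1}}.
\end{equation*}
The left side equals $1-\frac{a+1/2}{s}+\frac{a^2}{s^2}+s^{-3/2}$. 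The right side is at least $1-\frac1{2s}+O(s^{-2})$. So the leading $1/s$ terms leave a margin of $\frac{a}{s}$, which must absorb $\frac{a^2}{s^2}+s^{-3/2}$ and the $O(s^{-2})$ correction from the square root.

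This elementary but delicate inequality, for small $s$ (say $s=a+1,a+2$, with $a=2$), is where I expect the main obstacle. I would first try to verify it by splitting into $s\ge s_0$, where the $a/s$ margin clearly dominates, and a finite list of small $s$ checked by hand. If it fails at some small $s$, I would use the slack available in the step-one bound (or the sharper bound $(1-\beta_t)^2\le 1-\beta_t$ is not helpful, so rather track the exact product $\prod(1-\beta_j)^2$ for the first few steps) to start the induction at a later index.

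Finally, the second inequality in \eqref{eq:tracking-rate} is immediate. Apply $\|\bz_t-\bz_0\|^2\le 2\|\bar\bz_t-\bz_0\|^2+2\|\bz_t-\bar\bz_t\|^2$, use \Cref{lem:boundedness} for the first term (giving $\frac{25}{2}\|\bz_0-\bz^\star\|^2$), and take expectations.
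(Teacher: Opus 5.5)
Your proposal is correct and follows the paper's proof: nonexpansiveness of $\prox_{\eta_t r}$, the vanishing noise cross term, monotonicity plus Lipschitzness (the paper's \Cref{lem: normsum}), \Cref{asp: 2} with \Cref{lem:boundedness} for the variance, the same recursion, and induction against $K/\sqrt{t+a}$ with the $t=0$ step handled directly via $\bz_0=\bar\bz_0$.

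The scalar inequality you flagged as the main obstacle holds for all $t\geq1$, so no fallback is needed. You also have the sign of the square-root correction backwards: it helps rather than needing to be absorbed. Convexity of $x\mapsto x^{-1/2}$ gives $\sqrt{s/(s+1)}\geq 1-\frac{1}{2s}$ exactly. It then suffices that $\frac{a^2}{s^2}+s^{-3/2}\leq\frac{a}{s}$, i.e.\ $a-\frac{a^2}{s}-\frac{1}{\sqrt s}\geq 0$. This expression is increasing in $s$. At $s=a+1$ it equals $\frac{a}{a+1}-\frac{1}{\sqrt{a+1}}$, which is nonnegative because $a^2\geq a+1$ for $a\geq 2$.

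This is exactly how the paper splits the step. First, \eqref{eq: tracking-coefficient} bounds the contraction factor $(1-\beta_t)^2+\eta_t^2H^2$ by $1-\frac{a}{t+a}$. The same convexity bound $(s+1)^{-1/2}\geq s^{-1/2}-\frac12 s^{-3/2}$ then closes the induction.
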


\section{A Halpern Method with STORM Variance Reduction}\label{sec: storm}
In this section, we depart from the game setting which came with specific requirements on access to the oracle. In particular, in the previous section, it was essential that the algorithm only accessed one unbiased oracle per random seed $\xi$, that is $\gtil(z_t, \xi_t)$. However, in optimization setting, such as for linearly constrained optimization, it is possible to access two-point oracles $(\gtil(z_t, \xi_t), \gtil(z_{t-1}, \xi_t))$. As we now prove, this oracle will helps us obtain a better, in fact $\varepsilon$-optimal, complexity.

\Cref{alg: alg2} is simple combination of \Cref{alg: alg1} and the STORM estimator \cite{cutkosky2019momentum}. STORM has previously been combined with Halpern anchoring for extragradient methods \cite{alacaoglu2025towards} and forward-backward-forward methods \cite{alacaoglu2026solving}, yet only suboptimal complexity results with $\widetilde{O}(\varepsilon^{-4})$ were shown for a randomly selected iterate. 

We replace the single-call stochastic gradient in \cref{alg: alg1} with the STORM estimator, without an additional extragradient correction. This method uses two stochastic-calls and one proximal update, with no mini-batch or inner loops. We will prove the $\varepsilon$-optimal complexity of $O(\varepsilon^{-2})$ for the last iterate. Because we use variance reduction, we will use the mean-square smoothness property for the operator. This assumption implies Assumption \ref{asp: 2} since $\mathbb{E}\|G_{\xi}(\bx_0)\|^2$ is bounded. Moreover, for the example sketched in Section \ref{subsec: asp}, the following assumption is also satisfied.

\begin{algorithm}[h]
\caption{A STORM proximal Halpern method}
\label{alg: alg2}
\begin{algorithmic}[1]
\REQUIRE Initial point $\bz_0\in \dom r$, and $H=\max\left\{2L,\;4\sqrt{1024\Lambda^2+4+16B^2}\right\}$. Set $\alpha_t=\beta_t=\frac{1}{t+1}$ and $\eta_t=\frac{1}{H\sqrt{t+1}}$. Draw $\xi_0$, set $\bv_0 = \gtil(\bz_0, \xi_0)$ and $\bz_1 = \prox_{\eta_0 r}(\bz_0-\eta_0 \bv_0)$
\FOR{$t=1,2,\ldots$}
\STATE Draw one sample $\xi_t$ and obtain unbiased estimates $\gtil(\bz_t,\xi_t)$ and $\gtil(\bz_{t-1},\xi_t)$
\STATE $\bv_t = \gtil(\bz_t,\xi_t) +(1-\alpha_t)\big(\bv_{t-1}-\gtil(\bz_{t-1},\xi_t)\big)$
\STATE $\bz_{t+1}=\prox_{\eta_t r}
        \big((1-\beta_t)\bz_t+\beta_t\bz_0-\eta_t\bv_t\big)$
\ENDFOR
\end{algorithmic}
\end{algorithm}

\begin{assumption}\label{asp: 3}
For some $\xi$, we can obtain the multi-point oracle $\gtil(\bx,\xi)$ and $\gtil(\by,\xi)$. Let $G$ be Lipschitz in mean square-sense, that is:
\begin{align*}
\E_{\xi} \|\gtil(\bx, \xi)
-\gtil(\by, \xi)\|^2 
\leq\Lambda^2\|\bx-\by\|^2.
\end{align*}
Let also $\mathbb{E}\|\gtil(\bz_0, \xi)\|^2<+\infty$.
\end{assumption}

We now show a last-iterate guarantee for the tangent residual, which is a stronger optimality measure than the gradient mapping. In particular, \emph{tangent residual} (which we often call \emph{residual}) is defined as
\begin{align*}
    \res(\bz) = \min_{\bs\in\partial r(\bz)}\|G(\bz) + \bs\|.
\end{align*}
In fact, we prove the rate for the quantity $\|G(\bz) + \bs\|$, where $\bs \in \partial r(\bz)$ which majorizes the tangent residual, as well as the gradient mapping norm, see for example \cite[Prop. 2]{cai2022accelerated}.
\begin{theorem}\label{th: stmain}
Let \Cref{asp: 1,asp: 3} hold. Then, for every $t\geq1$, \Cref{alg: alg2} satisfies
\begin{align*}
\E[\res(\bz_t)] 
\leq \frac{27H{\max\{\sigma,4\|\bz_0-\bz^\star\|\}}}{\sqrt{t}}
= O\left(t^{-1/2} \right).
\end{align*}
\end{theorem}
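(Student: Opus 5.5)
Theorem \ref{th: stmain} will be proved by a Lyapunov argument that combines a Halpern-type potential for the iterates with the STORM error $\bs$-free quantity $\epsilon_t:=\bv_t-G(\bz_t)$. No shadow sequence is used, because variance reduction makes the error small enough to be absorbed directly.

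The first step is the recursion for the estimator error. From the update,
\[
\epsilon_t=(1-\alpha_t)\epsilon_{t-1}+\big(\gtil(\bz_t,\xi_t)-G(\bz_t)\big)-(1-\alpha_t)\big(\gtil(\bz_{t-1},\xi_t)-G(\bz_{t-1})\big).
\]
The last two terms form a martingale difference given $\fil_t$. By \Cref{asp: 3} and Young's inequality we then get
\[
\Et\|\epsilon_t\|^2\le(1-\alpha_t)^2\|\epsilon_{t-1}\|^2+2\Lambda^2\|\bz_t-\bz_{t-1}\|^2+2\alpha_t^2\,\Et\|\gtil(\bz_t,\xi_t)-G(\bz_t)\|^2 .
\]
The last variance term is controlled by $\sigma^2+B^2\|\bz_t-\bz_0\|^2$; \Cref{asp: 3} implies \Cref{asp: 2}. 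With $\alpha_t=1/(t+1)$, multiplying by $(t+1)^2$ makes the weighted error $(t+1)^2\|\epsilon_t\|^2$ grow by at most $O\big(\Lambda^2 t^2\|\bz_t-\bz_{t-1}\|^2+\sigma^2+B^2\|\bz_t-\bz_0\|^2\big)$ per step.

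The second step is the deterministic Halpern part, following the proof of Lemma \ref{lem:increments} (\cite{cai2026lastiterate}). Set $\bs_{t+1}:=\big((1-\beta_t)\bz_t+\beta_t\bz_0-\eta_t\bv_t-\bz_{t+1}\big)/\eta_t\in\partial r(\bz_{t+1})$ via \eqref{eq:prox-optimality}. Then
\[
\bz_{t+1}-\bz_t=\beta_t(\bz_0-\bz_t)-\eta_t\big(G(\bz_{t+1})+\bs_{t+1}\big)-\eta_t\big(G(\bz_t)-G(\bz_{t+1})\big)-\eta_t\epsilon_t .
\]
I would compare consecutive increments $\bz_{t+1}-\bz_t$ and $\bz_t-\bz_{t-1}$ using monotonicity of $G+\partial r$, $L$-Lipschitzness, $H\ge2L$, and nonexpansiveness. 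This should give a recursion
\[
(t+1)^2\|\bz_{t+1}-\bz_t\|^2\le t^2\|\bz_t-\bz_{t-1}\|^2+O(1)\,(\text{anchor terms})+O\big(t^2\eta_t^2\|\epsilon_t-\epsilon_{t-1}\|^2\big)
\]
or an analogous one. In parallel I would prove $\E\|\bz_t-\bz^\star\|^2\le C\max\{\sigma^2/H^2,\|\bz_0-\bz^\star\|^2\}$ from the usual one-step inequality for prox-SGD with anchoring. Here the cross term $\eta_t\langle\epsilon_t,\bz^\star-\bz_{t+1}\rangle$ is handled with Young's inequality and the summable $\eta_t^2\E\|\epsilon_t\|^2$, which is where the $\max\{\sigma,4\|\bz_0-\bz^\star\|\}$ appears. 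This bound also controls the $B^2\|\bz_t-\bz_0\|^2$ term.

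The third step combines everything into a potential
\[
\Phi_t=t^2\|\bz_t-\bz_{t-1}\|^2+c\,\eta_t^2t^2\|\epsilon_{t-1}\|^2 .
\]
The constant $c$ is chosen so that the $2\Lambda^2\|\bz_t-\bz_{t-1}\|^2$ inflow of the error is dominated by the contraction of the increment term. This is what the large constant $H\ge4\sqrt{1024\Lambda^2+4+16B^2}$ buys. Inducting then gives $\E\Phi_t\le O\big(t\cdot H^{-2}\max\{\sigma,\|\bz_0-\bz^\star\|\}^2\cdot H^2\big)$. Hence $\E\|\bz_{t+1}-\bz_t\|=O(\max\{\cdot\}/\sqrt t)$ and $\E\|\epsilon_t\|=O(\max\{\cdot\}/\sqrt t)$. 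Finally,
\[
\|G(\bz_{t+1})+\bs_{t+1}\|\le\frac{\|\bz_{t+1}-\bz_t\|+\beta_t\|\bz_0-\bz_t\|}{\eta_t}+L\|\bz_{t+1}-\bz_t\|+\|\epsilon_t\| .
\]
With $\eta_t^{-1}=H\sqrt{t+1}$ and $\beta_t=1/(t+1)$, every term is $O(H\max\{\sigma,\|\bz_0-\bz^\star\|\}/\sqrt t)$, and $\res(\bz_{t+1})\le\|G(\bz_{t+1})+\bs_{t+1}\|$.

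The main obstacle is step two: getting a clean increment recursion for the stochastic prox-Halpern map with varying $\eta_t$ and $\beta_t$ that leaves only the error difference $\epsilon_t-\epsilon_{t-1}$, which the STORM recursion can absorb. The change of stepsize $\eta_t\ne\eta_{t-1}$ produces extra terms of relative order $1/t$, and these must be bounded using the iterate-boundedness estimate. I would first try the approach of \cite{cai2026lastiterate}, using the prox-inequality at $\bz_{t+1}$ and $\bz_t$ with $\bu$ swapped, and add the errors as perturbations.
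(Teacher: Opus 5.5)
Your skeleton matches the paper's: a STORM error recursion, a distance bound, an increment recursion, a joint induction, and the same final estimate through $\bs_{t+1}$. The gap is that the rates you carry through steps two and three are too weak by a factor $\sqrt t$, so the last step does not follow. In your final inequality the increment is multiplied by $\eta_t^{-1}=H\sqrt{t+1}$. Hence $\E\|\bz_{t+1}-\bz_t\|=O(t^{-1/2})$ only gives $\eta_t^{-1}\E\|\bz_{t+1}-\bz_t\|=O(H\max\{\sigma,\|\bz_0-\bz^\star\|\})$, which does not decay. Also, $\E\Phi_t=O(t)$ gives no decay of $\E\|\epsilon_{t-1}\|^2$ at all, since $c\,\eta_t^2t^2\approx ct/H^2$. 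The theorem needs $\E\|\bz_{t+1}-\bz_t\|^2=O(t^{-2})$ and $\E\|\epsilon_t\|^2=O(t^{-1})$; the paper proves $256C/(t+2)^2$ and $\Gamma C/(t+1)$ with $\Gamma=1024\Lambda^2+4+16B^2$ in \Cref{lem: idbd}. In other words, your potential must stay bounded rather than grow linearly. The recursion you sketch cannot give this: with coefficient exactly $1$ on $t^2\|\bz_t-\bz_{t-1}\|^2$ and $O(1)$ forcing, summing yields exactly the $O(t)$ growth you found.

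The missing ingredient is a strictly contractive increment recursion whose unweighted forcing is $O(t^{-3})$. The paper writes $\bz_t=\prox_{\eta_t r}(\bz_t+\eta_t\bs_t)$ and applies nonexpansiveness against the update for $\bz_{t+1}$ (\Cref{lem: stdis_app}):
\begin{align*}
\|\bz_{t+1}-\bz_t\|
&\leq\Big\|\Big(\frac{\eta_t}{\eta_{t-1}}-\beta_t\Big)(\bz_t-\bz_{t-1})-\eta_t\big(G(\bz_t)-G(\bz_{t-1})\big)\\
&\qquad+\Big(\frac{\eta_t}{\eta_{t-1}}\beta_{t-1}-\beta_t\Big)(\bz_{t-1}-\bz_0)+\eta_t\alpha_t\epsilon_{t-1}-\eta_t\zeta_t\Big\|,
\end{align*}
where $\zeta_t:=\gtil(\bz_t,\xi_t)-G(\bz_t)-(1-\alpha_t)(\gtil(\bz_{t-1},\xi_t)-G(\bz_{t-1}))$. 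Three facts then carry the argument. (i) Anchoring plus the decaying stepsize give $0\le\frac{\eta_t}{\eta_{t-1}}-\beta_t\le1-\frac{3}{2(t+1)}$. Combined with \Cref{lem: normsum}, $\eta_t^2L^2\le\frac{1}{4(t+1)}$, and a Young split with weight $\frac{1}{4(t+1)}$, the coefficient on $\|\bz_t-\bz_{t-1}\|^2$ is about $1-\frac{5}{2(t+1)}$, strictly below $(\frac{t+1}{t+2})^2\approx1-\frac{2}{t+1}$. (ii) The anchor-drift coefficient cancels to $\frac{1}{\sqrt{t(t+1)}}-\frac{1}{t+1}=O(t^{-2})$, not $O(\beta_t)$. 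So even after the Young factor $10(t+1)$ this term is $O(t^{-3})$, as is $10(t+1)\eta_t^2\alpha_t^2\E\|\epsilon_{t-1}\|^2$. (iii) $\zeta_t$ must not pass through Young, because that multiplies it by $\approx4(t+1)$ and gives $O(t^{-2})$ forcing, i.e.\ your $O(1)$ after weighting. Since $\zeta_t$ is conditionally mean-zero and everything else is $\fil_t$-measurable, it contributes only $\eta_t^2\Et\|\zeta_t\|^2\le\eta_t^2\big(2\Lambda^2\|\bz_t-\bz_{t-1}\|^2+2\alpha_t^2(\sigma^2+B^2\|\bz_{t-1}-\bz_0\|^2)\big)=O(t^{-3})$.

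A related correction concerns your distance bound, which is not controlled by a summable $\eta_t^2\E\|\epsilon_t\|^2$. Because $\bv_t$ is biased, the cross term must be absorbed by the $\beta_t$-contraction. This leaves $\frac{2\eta_t^2}{\beta_t}\|\epsilon_t\|^2=\frac{2}{H^2}\|\epsilon_t\|^2$, which is $O(\beta_t)$ only once $\E\|\epsilon_t\|^2\le\Gamma C/(t+1)$ with $\Gamma/H^2\le\frac{1}{16}$. That is why the distance, error, and increment bounds must be closed together in one induction.
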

Proof of this theorem appears in Appendix \ref{app: storm}.
Unlike the analysis of Algorithm \ref{alg: alg1}, we are not using the deterministic-tracking argument of \cite{sohrabi2026accelerated}. Instead, the proof will follow from induction, inspired by the proof in \cite{cai2026lastiterate}. The main distinction is that variance reduction brings a new recursion that is coupled with other recursions. In particular, we obtain recursions depending on the sequences $
    \mathbb{E}\|\bz_t-\bz_{t+1}\|^2, \mathbb{E}\|\bz_t - \bz^\star\|^2, \E \| \bv_t-G(\bz_t)\|^2$. After showing all these sequences have a certain rate, we can convert the result to the claimed rate on the residual.
\subsection{Three Lemmas}
We start with a recursion showing the evolution of the difference of iterates of the method. The explicit form is rather involved, however, we include the orders of the terms in the statement below.
\begin{lemma}[See \Cref{lem: stdis_app} for the full expression]\label{lem: stdis}
Under \Cref{asp: 1,asp: 3}, for every $t\geq1$ and for some $a>1$, we have
\begin{align*}
\E\|\bz_{t+1}-\bz_t\|^2
&\leq \Theta\left(1-\frac{a}{t+1}\right)
\E\|\bz_t-\bz_{t-1}\|^2\\
&\quad+ \Theta\left(\frac{1}{t^2}\right)\E \| \bv_{t-1}-G(\bz_{t-1})\|^2 + \Theta\left(\frac{1}{t^3} \right) (\E\|\bz_{t-1}-\bz_0\|^2+\sigma^2).
\end{align*}
\end{lemma}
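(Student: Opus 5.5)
We compare the two consecutive proximal steps through the nonexpansiveness of $\prox_{\eta r}$, after reducing both to a common stepsize. Write $\bw_t=(1-\beta_t)\bz_t+\beta_t\bz_0-\eta_t\bv_t$, so that $\bz_{t+1}=\prox_{\eta_t r}(\bw_t)$ and $\bz_t=\prox_{\eta_{t-1}r}(\bw_{t-1})$. The stepsizes differ, so we first rescale. By the prox-inequality \eqref{eq:prox-optimality}, $\bz_t=\prox_{\eta_t r}(\bz_t+\tfrac{\eta_t}{\eta_{t-1}}(\bw_{t-1}-\bz_t))$. Nonexpansiveness of $\prox_{\eta_t r}$ then gives
\begin{align*}
\|\bz_{t+1}-\bz_t\|\le \Big\|\bw_t-\bz_t-\tfrac{\eta_t}{\eta_{t-1}}(\bw_{t-1}-\bz_t)\Big\|.
\end{align*}
Expanding the right-hand side with $\beta_t=\frac{1}{t+1}$ produces three groups of terms:
\begin{itemize}[leftmargin=*]
\item a contraction term of the form $(1-\beta_t)(\bz_t-\bz_{t-1})-\eta_t(G(\bz_t)-G(\bz_{t-1}))$;
\item anchor and stepsize-mismatch terms of size $O(1/t)\cdot\|\bz_{t-1}-\bz_0\|$ and $O(1/t)\cdot\eta_{t-1}\|\bv_{t-1}\|$, which come from $\beta_{t-1}-\beta_t=O(1/t^2)$ and $1-\eta_t/\eta_{t-1}=O(1/t)$;
\item the estimator increment $\eta_t\big[(\bv_t-G(\bz_t))-(\bv_{t-1}-G(\bz_{t-1}))\big]$.
\end{itemize}

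For the contraction term we use monotonicity and Lipschitzness:
\begin{align*}
\|\bx-\by-\eta(G\bx-G\by)\|^2\le(1+\eta^2L^2)\|\bx-\by\|^2.
\end{align*}
Since $\eta_t^2L^2\le 1/(4(t+1))$ (because $H\ge 2L$), this gives a factor $(1-\beta_t)^2(1+\eta_t^2L^2)\le 1-\frac{a'}{t+1}$ with some $a'>1$.

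For the estimator increment, the STORM recursion gives
\begin{align*}
\bv_t-G(\bz_t)=(1-\alpha_t)(\bv_{t-1}-G(\bz_{t-1}))+\epsilon_t-\alpha_t(\ldots).
\end{align*}
Here $\epsilon_t=\gtil(\bz_t,\xi_t)-\gtil(\bz_{t-1},\xi_t)-(G(\bz_t)-G(\bz_{t-1}))$ has conditional mean zero given $\fil_t$, and by \Cref{asp: 3}, $\E_t\|\epsilon_t\|^2\le\Lambda^2\|\bz_t-\bz_{t-1}\|^2$. The increment therefore equals $-\alpha_t(\bv_{t-1}-G(\bz_{t-1}))+\epsilon_t$, plus an $\alpha_t$-weighted noise term $\gtil(\bz_t,\xi_t)-G(\bz_t)$. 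After multiplying by $\eta_t$, its contributions are:
\begin{itemize}[leftmargin=*]
\item $\eta_t^2\alpha_t^2\E\|\bv_{t-1}-G(\bz_{t-1})\|^2=\Theta(t^{-3})$ times the estimator error (certainly within the claimed $\Theta(t^{-2})$);
\item $\eta_t^2\Lambda^2\E\|\bz_t-\bz_{t-1}\|^2=O(1/(H^2t))$ times the previous increment, which is absorbed into the contraction because $H^2\ge 16\cdot1024\Lambda^2$;
\item $\eta_t^2\alpha_t^2(\sigma^2+B^2\|\bz_t-\bz_0\|^2)$, the variance term. We use the consequence of \Cref{asp: 3} that $\E_\xi\|\gtil(\bz,\xi)-G(\bz)\|^2\le 2\Lambda^2\|\bz-\bz_0\|^2+\sigma^2$-type bound with $B$ as in the paper. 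We then replace $\|\bz_t-\bz_0\|$ by $\|\bz_{t-1}-\bz_0\|+\|\bz_t-\bz_{t-1}\|$ and absorb the second piece as before.
\end{itemize}
The key point is that the zero-mean term $\epsilon_t$ is orthogonal in conditional expectation to everything $\fil_t$-measurable. So when we expand $\E\|\cdot\|^2$, its cross terms vanish.

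To assemble, we take $\E\|\cdot\|^2$ of the sum and use Young's inequality $\|x+y\|^2\le(1+\gamma)\|x\|^2+(1+1/\gamma)\|y\|^2$ with $\gamma=\Theta(1/t)$ between the contraction part and the deterministic perturbation part. This yields:
\begin{itemize}[leftmargin=*]
\item on $\E\|\bz_t-\bz_{t-1}\|^2$, a coefficient $(1-\frac{a'}{t+1})(1+\gamma)+O(\frac{1}{H^2t})\le 1-\frac{a}{t+1}$ with $a>1$, provided $\gamma=c/(t+1)$ with $c$ small;
\item on $\|\bz_{t-1}-\bz_0\|^2$ and on the $\bv_{t-1}$-dependent mismatch terms, a coefficient $(1+1/\gamma)\cdot O(t^{-4})=O(t^{-3})$.
\end{itemize}
The mismatch term $\eta_{t-1}\|\bv_{t-1}\|$ is bounded by $\eta_{t-1}(\|\bv_{t-1}-G(\bz_{t-1})\|+\|G(\bz_{t-1})\|)$. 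Its first piece contributes $(1/\gamma)\cdot t^{-2}\eta^2=\Theta(t^{-2})$ times the estimator error; this is the claimed $\Theta(1/t^2)$ coefficient. For the second piece we use $\|G(\bz_{t-1})\|\le L\|\bz_{t-1}-\bz_0\|+\|G(\bz_0)\|$, where $\|G(\bz_0)\|$ is absorbed into $\sigma$-type constants; this gives the $\Theta(t^{-3})(\E\|\bz_{t-1}-\bz_0\|^2+\sigma^2)$ term.

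The main obstacle is the bookkeeping of $\gamma$. The perturbation terms are only $O(1/t)$ in norm, so a naive split would contribute $O(t^{-1})$ to the leading coefficient and destroy the contraction. We must choose $\gamma=\Theta(1/t)$ small enough to keep the leading coefficient below $1-\frac{a}{t+1}$ with $a>1$. At the same time, we must confirm that every perturbation is genuinely $O(t^{-3/2})$ in norm, including the factor $\eta$, so that multiplying by $1/\gamma$ still leaves $O(t^{-2})$ and $O(t^{-3})$ coefficients. The constants in $H$ are tuned precisely to make this absorption work.
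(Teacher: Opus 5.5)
Your overall route is the paper's: write $\bz_t=\prox_{\eta_t r}(\bz_t+\eta_t\bs_t)$ with $\bs_t=(\mathbf{w}_{t-1}-\bz_t)/\eta_{t-1}\in\partial r(\bz_t)$, and apply nonexpansiveness. Then control the operator part with \Cref{lem: normsum}, split $\bv_t-\bv_{t-1}$ into a conditionally mean-zero noise term plus $-\alpha_t(\bv_{t-1}-G(\bz_{t-1}))$, and apply Young's inequality with a $\Theta(1/t)$ parameter (the paper uses $1/(4(t+1))$).

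The gap is in the expansion. As listed, your three groups add up to $\mathbf{w}_t-\mathbf{w}_{t-1}$, so they drop the rescaling correction $(1-\frac{\eta_t}{\eta_{t-1}})(\mathbf{w}_{t-1}-\bz_t)$. Once that correction is included, there is no stepsize-mismatch term in $\bv_{t-1}$, because $\frac{\eta_t}{\eta_{t-1}}\cdot\eta_{t-1}\bv_{t-1}=\eta_t\bv_{t-1}$. One gets exactly
\begin{align*}
\mathbf{w}_t-\bz_t-\tfrac{\eta_t}{\eta_{t-1}}(\mathbf{w}_{t-1}-\bz_t)
&=\Big(\tfrac{\eta_t}{\eta_{t-1}}-\beta_t\Big)(\bz_t-\bz_{t-1})\\
&\quad+\Big(\tfrac{\eta_t}{\eta_{t-1}}\beta_{t-1}-\beta_t\Big)(\bz_{t-1}-\bz_0)
-\eta_t(\bv_t-\bv_{t-1}).
\end{align*}
So the mismatch only shrinks the coefficient of $\bz_t-\bz_{t-1}$ to $\frac{\eta_t}{\eta_{t-1}}-\beta_t\le 1-\frac{3}{2(t+1)}$. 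The anchor coefficient equals $\frac{1}{\sqrt{t(t+1)}}-\frac{1}{t+1}=O(t^{-2})$ by cancellation.

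This is not cosmetic, because the phantom term cannot be bounded at the order you claim. Your term $(1-\frac{\eta_t}{\eta_{t-1}})\eta_{t-1}\bv_{t-1}$ has norm $\Theta(t^{-3/2})\|\bv_{t-1}\|$, so after Young with $\gamma=\Theta(1/t)$ it contributes $\Theta(t^{-2})\|\bv_{t-1}\|^2$. Its piece
\begin{align*}
\Theta(t^{-2})\|G(\bz_{t-1})\|^2\le\Theta(t^{-2})\big(L^2\|\bz_{t-1}-\bz_0\|^2+\|G(\bz_0)\|^2\big)
\end{align*}
is of order $t^{-2}$, not $t^{-3}$ as you assert, and $\|G(\bz_0)\|$ is not controlled by $\sigma$. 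With a $\Theta(t^{-2})$ forcing term, the induction in \Cref{lem: idbd} only yields $\E\|\bz_{t+1}-\bz_t\|^2=O(1/t)$. Then $(L+\eta_{t-1}^{-1})\sqrt{\E\|\bz_t-\bz_{t-1}\|^2}=O(1)$, and \Cref{th: stmain} is lost.

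Your closing heuristic has the same slip. An $O(t^{-3/2})$ perturbation gives only an $O(t^{-2})$ coefficient after the $1/\gamma$ factor. That is acceptable solely for $\eta_t\alpha_t(\bv_{t-1}-G(\bz_{t-1}))$, which is why the statement has a $\Theta(1/t^2)$ slot there. Whatever multiplies $\|\bz_{t-1}-\bz_0\|$ must be $O(t^{-2})$ in norm.

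A related loss: bounding the contraction coefficient by $1-\beta_t$ gives $a$ below $7/4$. That meets the literal ``$a>1$'' of the statement, but not what is used afterwards. Propagating $256C/(t+1)^2$ to $256C/(t+2)^2$ in \eqref{eq: fc4} requires $a>2$. The exact coefficient $\frac{\eta_t}{\eta_{t-1}}-\beta_t$ supplies this, giving $a\approx 5/2$.
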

On a high level, one can see that if $\mathbb{E}\|\bz_{t-1}-\bz_0\|^2$ was constant and if $\mathbb{E}\|\bv_{t-1} - G(\bz_{t-1})\|^2$ had a rate $O(1/t)$, then inducting on this inequality would give $\mathbb{E}\|\bz_t-\bz_{t-1}\|^2 = O(1/t^2)$. Indeed, we do not have boundedness of $\mathbb{E}\|\bz_{t-1}-\bz_0\|^2$ or the rate of decrease of $\mathbb{E}\|\bv_{t-1} - G(\bz_{t-1})\|^2$, we will  characterize the evolution of these sequences and then use induction.

We follow with the recursion for the variance reduced estimator $\bv_t$ with its proof in App. \ref{app: lem_storm1}.
\begin{lemma}\label{lem: esterr}
Under \Cref{asp: 1} and \ref{asp: 3}, for every $t\geq1$, the estimator in \cref{alg: alg2} satisfies
\begin{align*}
\Et\|\bv_t-G(\bz_t)\|^2
&\leq(1-\alpha_t)^2\|\bv_{t-1}-G(\bz_{t-1})\|^2 + 2\Lambda^2\|\bz_t-\bz_{t-1}\|^2 \\
&\quad+ 2\alpha_t^2\big(\sigma^2+B^2\|\bz_{t-1}-\bz_0\|^2\big).
\end{align*}
\end{lemma}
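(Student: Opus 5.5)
We prove Lemma \ref{lem: esterr} with the standard STORM error decomposition. Write $\varepsilon_t:=\bv_t-G(\bz_t)$. Subtracting $G(\bz_t)$ from the update rule and adding and subtracting $(1-\alpha_t)G(\bz_{t-1})$ gives
\begin{align*}
\varepsilon_t=(1-\alpha_t)\varepsilon_{t-1}+\underbrace{(1-\alpha_t)\big(\gtil(\bz_t,\xi_t)-\gtil(\bz_{t-1},\xi_t)-G(\bz_t)+G(\bz_{t-1})\big)}_{=:X_t}+\underbrace{\alpha_t\big(\gtil(\bz_{t-1},\xi_t)-G(\bz_{t-1})\big)}_{=:Y_t}.
\end{align*}
Alternatively, one can group $X_t+Y_t=\big(\gtil(\bz_t,\xi_t)-G(\bz_t)\big)-(1-\alpha_t)\big(\gtil(\bz_{t-1},\xi_t)-G(\bz_{t-1})\big)$. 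The point of the first grouping is that $\varepsilon_{t-1}$ is $\fil_t$-measurable: it depends only on $\xi_0,\dots,\xi_{t-1}$ and on $\bz_{t-1}$, and $\bz_t$ is also $\fil_t$-measurable. By unbiasedness (\Cref{asp: 2}, which \Cref{asp: 3} implies), $\Et[X_t]=\Et[Y_t]=0$, since the fresh sample $\xi_t$ is evaluated at the $\fil_t$-measurable points $\bz_t,\bz_{t-1}$.

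Expanding the square and taking $\Et$, the cross term $2(1-\alpha_t)\langle\varepsilon_{t-1},\Et[X_t+Y_t]\rangle$ vanishes. This leaves
\[
\Et\|\varepsilon_t\|^2=(1-\alpha_t)^2\|\varepsilon_{t-1}\|^2+\Et\|X_t+Y_t\|^2\le(1-\alpha_t)^2\|\varepsilon_{t-1}\|^2+2\Et\|X_t\|^2+2\Et\|Y_t\|^2 ,
\]
where the last step uses Young's inequality $\|a+b\|^2\le2\|a\|^2+2\|b\|^2$.

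Next we bound the two noise terms. For $X_t$, use $(1-\alpha_t)^2\le1$ and the fact that variance is at most the second moment:
\[
\Et\|X_t\|^2\le\Et\|\gtil(\bz_t,\xi_t)-\gtil(\bz_{t-1},\xi_t)\|^2\le\Lambda^2\|\bz_t-\bz_{t-1}\|^2 .
\]
The second inequality is the mean-square Lipschitz property in \Cref{asp: 3}, applied conditionally on $\fil_t$ with $\bx=\bz_t$ and $\by=\bz_{t-1}$ fixed. For $Y_t$, the variance bound gives $\Et\|Y_t\|^2\le\alpha_t^2(\sigma^2+B^2\|\bz_{t-1}-\bz_0\|^2)$. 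Multiplying each bound by $2$ yields exactly the stated inequality.

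The only delicate point is that this variance bound is needed at the point $\bz_{t-1}$ with the sample $\xi_t$, not with $\xi_{t-1}$ as \Cref{asp: 2} literally states. Because $\xi_t$ is independent of $\fil_t$ and $\bz_{t-1}$ is $\fil_t$-measurable, the bound in \eqref{eq: bg}, read as a pointwise bound over the sample distribution, applies equally well with $\xi_t$. This is how the constants $\sigma,B$ enter the STORM section, where \Cref{asp: 3} is said to imply \Cref{asp: 2}. Beyond checking this measurability, the argument is routine.
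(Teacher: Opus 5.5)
Your route is the same as the paper's: kill the cross term using $\fil_t$-measurability of $\varepsilon_{t-1}$, then use Young to split the noise into a mean-square-Lipschitz piece and an $\alpha_t$-weighted variance piece at $\bz_{t-1}$. However, your displayed decomposition is false as written. Set $a:=\gtil(\bz_t,\xi_t)-G(\bz_t)$ and $b:=\gtil(\bz_{t-1},\xi_t)-G(\bz_{t-1})$. The update gives
$\varepsilon_t=(1-\alpha_t)\varepsilon_{t-1}+a-(1-\alpha_t)b$. Your terms instead give
$X_t+Y_t=(1-\alpha_t)(a-b)+\alpha_t b=(1-\alpha_t)a-(1-2\alpha_t)b$.
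The two differ by $\alpha_t(a-b)$, which is nonzero in general. So $\varepsilon_t=(1-\alpha_t)\varepsilon_{t-1}+X_t+Y_t$ fails, and your ``alternatively, one can group'' sentence contradicts your own definitions of $X_t$ and $Y_t$. You have mixed two valid splittings: the textbook STORM one, $(1-\alpha_t)(a-b)+\alpha_t a$, and $(a-b)+\alpha_t b$.

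The repair is one line. Take $X_t:=a-b$ without the factor $(1-\alpha_t)$, and keep $Y_t=\alpha_t b$. Then $\Et[X_t]=0$, and directly
$\Et\|X_t\|^2\le\Et\|\gtil(\bz_t,\xi_t)-\gtil(\bz_{t-1},\xi_t)\|^2\le\Lambda^2\|\bz_t-\bz_{t-1}\|^2$,
with no need for $(1-\alpha_t)^2\le 1$. The rest of your argument then goes through verbatim and gives exactly the stated bound. This is precisely the split the paper uses in \Cref{lem: noiseupper}.

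Do not repair it the other way, with $Y_t:=\alpha_t a$. That split is valid, but it produces $B^2\|\bz_t-\bz_0\|^2$ instead of $B^2\|\bz_{t-1}-\bz_0\|^2$, so it does not prove the lemma as stated.

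Your remark about applying the variance bound at the pair $(\bz_{t-1},\xi_t)$ is fair. The paper uses the same reading of the assumption without comment.
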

We next have a recursion on the distance $\mathbb{E}\|\bz_t-\bz^\star\|^2$ which is necessary for showing expected boundedness of the iterates $\bz_t$ even when the domain of the problem is not bounded. Its proof is given in Appendix \ref{app: proof_bdd}.
\begin{lemma}\label{lem: stbd}
Let \Cref{asp: 1} hold. For every $t\geq1$ we have that
\begin{align*}
\|\bz_{t+1}-\bz^\star\|^2
\leq \left(1-\frac{\beta_t}{2}\right)\|\bz_t-\bz^\star\|^2 
+ 2\beta_t \|\bz_0-\bz^\star\|^2+\frac{2\eta_t^2}{\beta_t}\|\bv_t-G(\bz_t)\|^2.
\end{align*}
\end{lemma}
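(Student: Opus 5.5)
We prove \Cref{lem: stbd} deterministically, pathwise. The only ingredients are the prox-inequality \eqref{eq:prox-optimality}, monotonicity of $G+\partial r$, and one Young inequality that absorbs the estimator error into a fraction of the anchoring contraction.

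Write $\bz_{t+1}=\prox_{\eta_t r}(\bw_t)$ with $\bw_t=(1-\beta_t)\bz_t+\beta_t\bz_0-\eta_t\bv_t$. Apply \eqref{eq:prox-optimality} with $\bu=\bz^\star$:
\begin{align*}
\langle \bz_{t+1}-\bw_t,\bz^\star-\bz_{t+1}\rangle\geq \eta_t(r(\bz_{t+1})-r(\bz^\star)).
\end{align*}
Since $\bz^\star$ solves \eqref{eq:inclusion}, we have $-G(\bz^\star)\in\partial r(\bz^\star)$, so $r(\bz_{t+1})-r(\bz^\star)\geq -\langle G(\bz^\star),\bz_{t+1}-\bz^\star\rangle$. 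Expanding $\bw_t$ and using $2\langle \bz_{t+1}-\bx,\bz^\star-\bz_{t+1}\rangle=\|\bx-\bz^\star\|^2-\|\bz_{t+1}-\bz^\star\|^2-\|\bz_{t+1}-\bx\|^2$ with $\bx=(1-\beta_t)\bz_t+\beta_t\bz_0$, we get
\begin{align*}
\|\bz_{t+1}-\bz^\star\|^2\leq \|\bx-\bz^\star\|^2-\|\bz_{t+1}-\bx\|^2-2\eta_t\langle \bv_t-G(\bz^\star),\bz_{t+1}-\bz^\star\rangle .
\end{align*}

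Next, split $\bv_t-G(\bz^\star)=(\bv_t-G(\bz_t))+(G(\bz_t)-G(\bz^\star))$.

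\textbf{Monotone part.} Monotonicity gives $\langle G(\bz_t)-G(\bz^\star),\bz_t-\bz^\star\rangle\geq0$. This leaves the cross term $-2\eta_t\langle G(\bz_t)-G(\bz^\star),\bz_{t+1}-\bz_t\rangle$, which we bound by $\eta_t^2L^2\|\bz_t-\bz^\star\|^2+\|\bz_{t+1}-\bz_t\|^2$. This is the step I expect to be the main obstacle, because the negative term available is $-\|\bz_{t+1}-\bx\|^2$, not $-\|\bz_{t+1}-\bz_t\|^2$. We reconcile them via $\|\bz_{t+1}-\bz_t\|^2\leq 2\|\bz_{t+1}-\bx\|^2+2\beta_t^2\|\bz_t-\bz_0\|^2$. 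The leftover $\beta_t^2\|\bz_t-\bz_0\|^2$ is split between $\|\bz_t-\bz^\star\|^2$ and $\|\bz_0-\bz^\star\|^2$. The factor-2 mismatch must be handled by weighting Young's inequality as $\tfrac12\|\bz_{t+1}-\bz_t\|^2+2\eta_t^2L^2\|\cdot\|^2$. The resulting extra terms of size $\eta_t^2L^2$ and $\beta_t^2$ are at most $\beta_t/8$ each, because $H\geq 2L$ gives $\eta_t^2L^2\leq \frac{1}{4(t+1)}$ and $\beta_t\leq 1/2$ for $t\geq1$. (If this budget turns out too tight, I would instead write $G(\bz_t)-G(\bz^\star)=(G(\bz_t)-G(\bz_{t+1}))+(G(\bz_{t+1})-G(\bz^\star))$ and use monotonicity at $\bz_{t+1}$. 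Then only $2\eta_tL\|\bz_{t+1}-\bz_t\|\|\bz_{t+1}-\bz^\star\|$ appears, which is handled similarly.)

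\textbf{Noise part.} We bound $-2\eta_t\langle \bv_t-G(\bz_t),\bz_{t+1}-\bz^\star\rangle\leq \frac{2\eta_t^2}{\beta_t}\|\bv_t-G(\bz_t)\|^2+\frac{\beta_t}{2}\|\bz_{t+1}-\bz^\star\|^2$. This produces exactly the stated error term.

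\textbf{Anchor part.} Convexity of the squared norm gives $\|\bx-\bz^\star\|^2\leq(1-\beta_t)\|\bz_t-\bz^\star\|^2+\beta_t\|\bz_0-\bz^\star\|^2$.

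Finally, collect terms. The $\frac{\beta_t}{2}\|\bz_{t+1}-\bz^\star\|^2$ on the right is moved to the left, and we divide by $1-\beta_t/2$, which lies in $[3/4,1]$. Combining with the small losses from the monotone part, the coefficient of $\|\bz_t-\bz^\star\|^2$ is $(1-\beta_t+O(\beta_t/4))/(1-\beta_t/2)\leq 1-\beta_t/2$. The coefficient of $\|\bz_0-\bz^\star\|^2$ is at most $2\beta_t$. The noise term's coefficient $2\eta_t^2/\beta_t$ may need a slightly smaller Young constant so that it survives the division. If so, I would use $\frac{\beta_t}{4}$ in the noise Young step, so that the factor stays within $2\eta_t^2/\beta_t$ after dividing. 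The remaining work is checking these constants.
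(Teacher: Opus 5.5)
Your proposal has a genuine gap. The contraction budget does not close, and this is not a matter of constants left to check. After your noise step
\[
2\eta_t|\langle \bv_t-G(\bz_t),\bz_{t+1}-\bz^\star\rangle|\le \frac{2\eta_t^2}{\beta_t}\|\bv_t-G(\bz_t)\|^2+\frac{\beta_t}{2}\|\bz_{t+1}-\bz^\star\|^2 ,
\]
you are left with $(1-\frac{\beta_t}{2})\|\bz_{t+1}-\bz^\star\|^2\le(1-\beta_t+\ell)\|\bz_t-\bz^\star\|^2+\cdots$, where $\ell\ge0$ collects the losses from the monotone part. Since $(1-\frac{\beta_t}{2})^2=1-\beta_t+\frac{\beta_t^2}{4}$, dividing yields the coefficient $1-\frac{\beta_t}{2}$ if and only if $\ell\le\frac{\beta_t^2}{4}$. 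So the noise step has already spent the whole margin between $1-\beta_t$ and $1-\frac{\beta_t}{2}$, and your claim $(1-\beta_t+O(\beta_t/4))/(1-\beta_t/2)\le 1-\beta_t/2$ is false.

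Your monotone part cannot meet $\ell\le\frac{\beta_t^2}{4}$. Under any Young split, the leftover $\beta_t^2\|\bz_t-\bz_0\|^2$ puts at least $\beta_t^2>\frac{\beta_t^2}{4}$ on $\|\bz_t-\bz^\star\|^2$. Also $2\eta_t^2L^2=\frac{2L^2}{H^2}\beta_t$ is of order $\beta_t$, so it exceeds $\frac{\beta_t^2}{4}$ for all large $t$. Separately, $H\ge 2L$ and $\beta_t\le\frac12$ only give $2\eta_t^2L^2\le\frac{\beta_t}{2}$ and $2\beta_t^2\le\beta_t$, not $\frac{\beta_t}{8}$. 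Your fallback via monotonicity at $\bz_{t+1}$ has the same defect: it adds a multiple of $\eta_t^2L^2\|\bz_{t+1}-\bz^\star\|^2$ on the right, which shrinks the divisor further.

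In fact, even with $\ell=0$ this route cannot reach the stated constants. With Young weight $\gamma\beta_t$ on $\|\bz_{t+1}-\bz^\star\|^2$, the requirement $\frac{1-\beta_t}{1-\gamma\beta_t}\le1-\frac{\beta_t}{2}$ forces $\gamma\le\frac{1}{2-\beta_t}$. The noise coefficient after division is then $\frac{\eta_t^2}{\gamma\beta_t(1-\gamma\beta_t)}\ge\frac{(2-\beta_t)^2}{2(1-\beta_t)}\cdot\frac{\eta_t^2}{\beta_t}>\frac{2\eta_t^2}{\beta_t}$. Your suggested remedy of using $\frac{\beta_t}{4}$ moves the wrong way: it raises the noise coefficient to $\frac{4\eta_t^2}{\beta_t}$ even before division.

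The missing idea is to never evaluate the error terms at $\bz_{t+1}$. Your three-point inequality is exactly firm nonexpansiveness of $\prox_{\eta_t r}$ at $\mathbf{w}_t$ and $\bz^\star-\eta_tG(\bz^\star)$. Discarding its negative part leaves plain nonexpansiveness, $\|\bz_{t+1}-\bz^\star\|\le\|A+B\|$, with $A=(1-\beta_t)(\bz_t-\bz^\star)-\eta_t(G(\bz_t)-G(\bz^\star))$ and $B=\beta_t(\bz_0-\bz^\star)-\eta_t(\bv_t-G(\bz_t))$; this is where the paper starts. Then split $\|A+B\|^2\le\frac{1}{1-\beta_t}\|A\|^2+\frac{1}{\beta_t}\|B\|^2$:
\begin{itemize}
\item Lemma~\ref{lem: normsum} gives $\|A\|^2\le((1-\beta_t)^2+\eta_t^2L^2)\|\bz_t-\bz^\star\|^2$. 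The contraction is therefore $1-\beta_t+\frac{\eta_t^2L^2}{1-\beta_t}\le1-\frac{\beta_t}{2}$, using $\eta_t^2L^2\le\frac{\beta_t}{4}$ and $\beta_t\le\frac12$.
\item For the other part, $\frac{1}{\beta_t}\|B\|^2\le2\beta_t\|\bz_0-\bz^\star\|^2+\frac{2\eta_t^2}{\beta_t}\|\bv_t-G(\bz_t)\|^2$.
\end{itemize}
The Young weights match the factors $1-\beta_t$ and $\beta_t$ already present in $A$ and $B$. Hence the noise and anchor terms consume none of the contraction, and the single Lipschitz loss is charged against the full $\frac{\beta_t}{2}$ margin.
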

\subsection{Convergence Analysis}
\begin{proof}[Proof sketch of Theorem \ref{th: stmain}]
We use induction on the three estimates given in Lemmas \ref{lem: stdis}, \ref{lem: esterr}, and \ref{lem: stbd}. This is carried out in \Cref{lem: idbd} and we obtain
\begin{align}\label{eq: ind_results}
    \E\|\bz_t-\bz_0\|^2 < C,~~~ \E\|\bv_t-G(\bz_t)\|^2 = O(1/k),~~~ \E\|\bz_t-\bz_{t+1}\bz\|^2 = O(1/k^2).
\end{align}
Moreover, expanding on the definition of the tangent residual, we obtain
\begin{align*}
    \E\|G(\bz_{t})+\bs_{t}\|
&\leq(L+\eta_{t-1}^{-1})\E\|\bz_{t}-\bz_{t-1}\|
+\left(\frac{\beta_{t-1}}{\eta_{t-1}}\right)\E\|\bz_{t-1}-\bz_0\|\\
&\quad+\E\|\bv_{t-1}-G(\bz_{t-1})\|,
\end{align*}
where the precise estimation appears in Appendix \ref{app: storm}.
Since $\eta_t = \Theta(1/\sqrt{t})$ and $\beta_t=\Theta(1/t)$, plugging in the bounds from \eqref{eq: ind_results} gives the assertion.
\end{proof}
\section{Preliminary Numerical Results}\label{sec: numres}
Our numerical experiments evaluate two constrained problems to complement our theoretical results. We first consider the problem of matrix games given as $\min_{\bx\in \Delta^{m}}\max_{\by\in \Delta^{n}}\bx^\top A\by$, where $\Delta^d$ denotes the $d$-dimensional simplex. For this problem, we set $m=n=50$ and set $A$ by using the random payoff game of \cite{abe2025boosting}, replacing their additive noise with row and column sampling for computing the unbiased oracles. We report the gap defined in \eqref{eq: gap_def}, which is equivalent with the primal dual gap for this problem. Next we show the robust least squares problem studied by \cite{cai2022stochastic}, with $\ell_\infty$ constraints: $\min_{\bx\in \mathcal{X}} \max_{\by\in\mathcal Y} \frac{1}{2n}\|A\bx-\by\|^2-\frac{1.5}{2n}\|\by-\bb\|^2$, where $\mathcal X=\{\bx:\|\bx\|_\infty\leq1\}$ and $\mathcal Y=\{\by:\|\by-\bb\|_\infty\leq1/2\}$. Following \cite{cai2022stochastic}, we use the same real dataset size of $21263\times81$ from \cite{hamidieh2018data,dua2017uci}. We estimate the operator by uniformly sampling a data row and report the gradient mapping norm.

We compare \Cref{alg: alg1} and \Cref{alg: alg2} with RG and ROG of \cite{ito2026last}, and anytime PS-OGDA of \cite{zheng2026lastiterate}. For each method, the hyperparameters are selected using three validations and then fixed for evaluation. The reported curves are averages over ten independent runs. We observe that \Cref{alg: alg1} performs comparably to or slightly better than the baselines, while \Cref{alg: alg2} outperforms over the baselines on both problems. 

\begin{figure*}[ht!]
\centering
\subfigure{%
\includegraphics[width=0.45\linewidth]{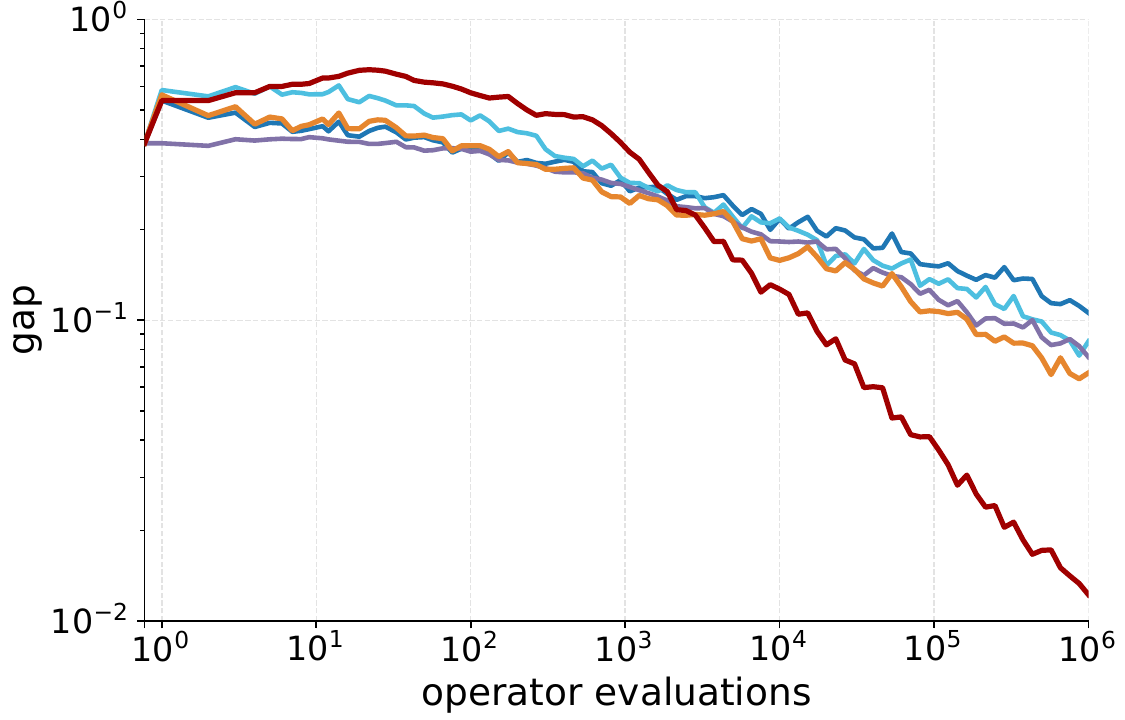}
}\hfill
\subfigure{%
\includegraphics[width=0.45\linewidth]{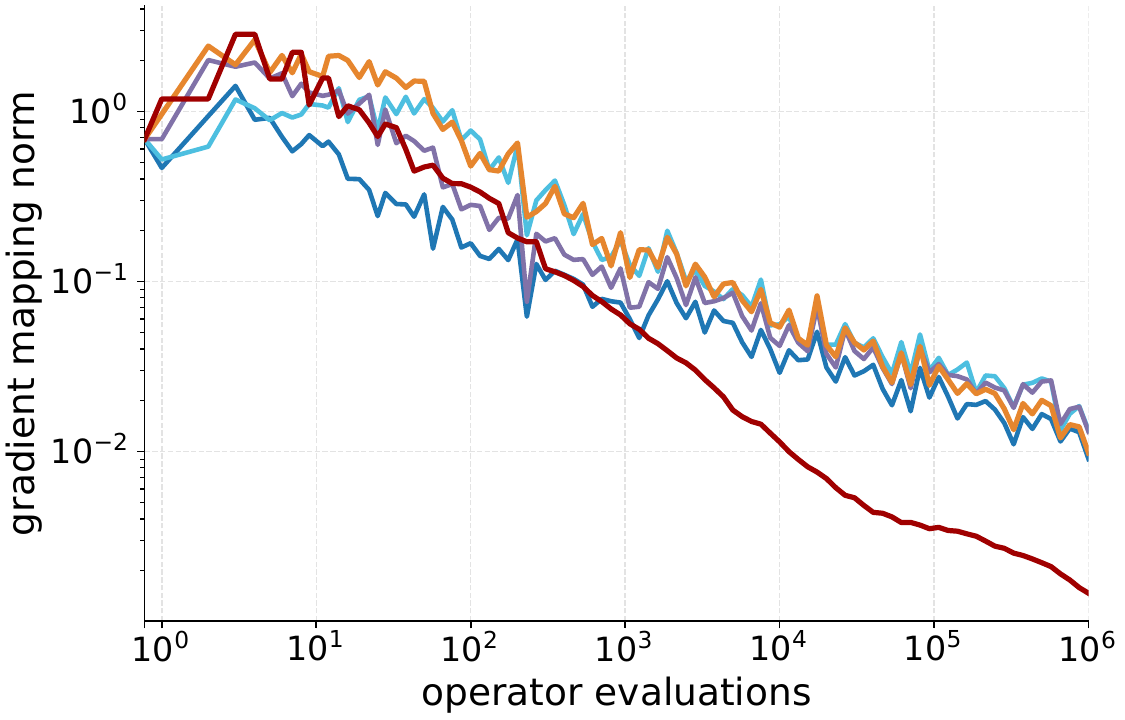}
}\hfill
\subfigure{%
\includegraphics[width=0.60\linewidth]{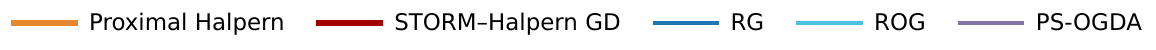}
}
\caption{\small{Left: matrix games. Right: robust least squares problem.}
} 
\label{fig: fig1}
\end{figure*}
\section{Conclusions}
We studied single-loop, anytime algorithms and proved the rates of convergence $O(t^{-1/4})$ and $O(t^{-1/2})$, where the former is by using a single call to the unbiased oracle at every iteration with no variance reduction and the latter is by using two calls, with variance reduction. Directions for future work include obtaining a $\widetilde{O}(\varepsilon^{-2})$ complexity with a single-loop method, handling nonmonotonicity, and getting optimal dependence on non-dominant terms in convergence bounds.

\section*{AI Usage Disclosure}
The authors used generative AI tools for  the straightforward but tedious numerical estimations used in Facts \ref{fact: sces} and \ref{fact: stes}, in particular to obtain precise constants. The authors then verified, simplified, and rewritten all of these estimations. After setting up the algorithm and the proof template for Section \ref{sec: storm}, and verifying the orders of the terms in the analysis for $\eta_t$ of order $1/t^{1/3}$, the authors asked these tools to check the orders of the terms when $\eta_t$ has order $1/t^{1/2}$ to check feasibility. After this, the authors wrote the proof. The authors used generative AI tools for coding for the numerical experiments.

\section*{Acknowledgments}
This research was funded by the Natural Sciences and Engineering Research Council of Canada (NSERC), [funding reference number RGPIN-2025-06634].

\bibliographystyle{alpha}
\bibliography{lit}

\newpage
\appendix

\section{Analysis of Algorithm \ref{alg: alg1}}\label{sec: app_alg1}
\begin{proof}[Proof of \Cref{th:main}]
Recall that $\rho=1/L$. By adding and subtracting $\mathcal G_\rho(\bar \bz_t)$ and applying Young's inequality, we obtain 
\begin{align}
\E\|\mathcal G_\rho(\bz_t)\|^2
&\leq 2\|\mathcal G_\rho(\bar{\bz}_t)\|^2+2 \E\|\mathcal G_\rho (\bz_t)-\mathcal G_\rho(\bar{\bz}_t)\|^2 \notag \\
&\leq 2\|\mathcal G_\rho(\bar{\bz}_t)\|^2+2(\rho^{-2}+L^2) \E\|\bz_t-\bar{\bz}_t\|^2,\label{eq: uio5}
\end{align}
where the last line is by \eqref{eq:mapping-lipschitz}.
We bound the first term on the right-hand side. By \Cref{lem:mapping-comparison,lem:increments}, we get
\begin{align*}
\|\mathcal G_\rho (\bar \bz_t)\| 
\leq \frac{5aH\|\bz_0-\bz^\star\|}{2(t-1+a)^{1/4}} \left(\frac32+\frac{L}{2H(t-1+a)^{3/4}}\right).
\end{align*}
Taking the square of both sides and multiplying by $2$ give us
\begin{align*}
2\|\mathcal G_\rho (\bar \bz_t)\|^2
&\leq \frac{25a^2H^2\|\bz_0-\bz^\star\|^2}{2\sqrt{t-1+a}} \left(\frac32+\frac{L}{2Ha^{3/4}}\right)^2,
\end{align*}
where we used $t-1+a\geq a$. 

For the second term on the right-hand side of \eqref{eq: uio5}, by \Cref{lem:tracking}, we have
\begin{align*}
2(\rho^{-2}+L^2) \E\|\bz_t-\bar{\bz}_t\|^2
\leq\frac{2(\rho^{-2}+L^2)}{H^2(a-\frac12)\sqrt{t-1+a}}
\left(\sigma^2+\frac{25}{2}B^2\|\bz_0-\bz^\star\|^2\right),
\end{align*}
where we used $t+a\geq t-1+a$.

Combining the last two estimates in \eqref{eq: uio5} yields
\begin{align*}
\E \|\mathcal G_\rho(\bz_t)\|^2
&\leq \frac{1}{\sqrt{t-1+a}}
\Bigg[
\frac{25a^2H^2\|\bz_0-\bz^\star\|^2}{8} \left(3+\frac{L}{Ha^{3/4}} \right)^2 \\
&\qquad\qquad+
\frac{2(\rho^{-2}+L^2)}{H^2(a-\frac12)} \left( \sigma^2+\frac{25}{2}B^2\|\bz_0-\bz^\star\|^2 \right) 
\Bigg].
\end{align*}
Using $H\geq L$ and $a\geq 2$, absorbing numerical constants  and Jensen's inequality, that is,
\begin{align*}
\E\|\mathcal G_\rho(\bz_t)\|
\leq \left(\E\|\mathcal G_\rho(\bz_t)\|^2\right)^{1/2},
\end{align*}
provides the assertion.
\end{proof}
\subsection{Proof of Lemma \ref{lem:boundedness}}\label{subsec: lem31}
\begin{proof}[Proof of Lemma \ref{lem:boundedness}]
By the definition of the solution, we have $-G(\bz^\star)\in\partial r(\bz^\star)$ and
$\bz^\star=\prox_{\eta_t r}(\bz^\star-\eta_tG(\bz^\star))$.
With this, nonexpansiveness of the proximal operator, and triangle inequality, we obtain 
\begin{align}
    \|\bar\bz_{t+1} - \bz^\star\| &\leq \|(1-\beta_t)\bar\bz_{t} +\beta_t\bz_0 - \bz^\star - \eta_t G(\bar\bz_t)+\eta_t G(\bz^\star)\|\notag \\
    &\leq \| (1-\beta_t)(\bar\bz_t-\bz^\star) - \eta_t (G(\bar\bz_t) - G(\bz^\star))\| + \beta_t\|\bz_0-\bz^\star\|.\label{eq: sfi4}
\end{align}
For the first norm on the right-hand side, we use Lemma \ref{lem: normsum} with $\bu=\bar\bz_t$, $\bs=\bz^\star$, $\alpha=(1-\beta_t)$, $\gamma=\eta_t$
to get
\begin{equation}\label{eq:boundedness-recursion}
    \|\bar{\bz}_{t+1}-\bz^\star\|
    \leq
    \sqrt{(1-\beta_t)^2+\eta_t^2L^2}\,
        \|\bar{\bz}_t-\bz^\star\|
    +\beta_t\|\bz_0-\bz^\star\|.
\end{equation}
We first verify the induction base case $t=1$. At the first step, $\bar{\bz}_0=\bz_0$, $\beta_0=1$ and $\eta_0=1/(Ha^{3/4})$. 
By definition of $\bar\bz_1$ and Lemma \ref{lem: normsum} with $\bu=\bz_0$, $\bs=\bz^\star$, $\alpha=1$, $\gamma=\eta_0$, we have
\begin{align*}
\|\bar{\bz}_1-\bz^\star\|^2 \leq \| \bz_0-\bz^\star - \eta_0(G(\bz_0)-G(\bz^\star))\|^2
\leq(1+\eta_0^2L^2)\|\bz_0-\bz^\star\|^2
\leq\frac{9}{4}\|\bz_0-\bz^\star\|^2.
\end{align*}
Indeed, $\eta_0^2L^2=L^2/(H^2a^{3/2})\leq a^{-3/2}\leq 5/4$ since $H\geq L$, and $a\geq2$.

We now prove the first bound in \eqref{eq:reference-bounds} by induction. The case $t=0$ follows from $\bar{\bz}_0=\bz_0$, and the calculation above proves the case $t=1$. Suppose that, for some $t\geq1$, $\|\bar{\bz}_t-\bz^\star\| \leq\frac32\|\bz_0-\bz^\star\|$. For $t\geq1$, \Cref{fact: sces} gives $\sqrt{(1-\beta_t)^2+\eta_t^2L^2} \leq 1-\frac23\beta_t$. Using this and the inductive assumption in \eqref{eq:boundedness-recursion}, we obtain
\begin{align*}
\|\bar{\bz}_{t+1}-\bz^\star\|
&\leq \left(1-\frac23\beta_t\right)\|\bar{\bz}_t-\bz^\star\| + \beta_t\|\bz_0-\bz^\star\|\notag\\
&\leq \left(1-\frac23\beta_t\right) \frac{3}{2}\|\bz_0-\bz^\star\| +\beta_t \|\bz_0 - \bz^\star\|\notag\\
&=\frac32\|\bz_0-\bz^\star\|. 
\end{align*}
This completes the induction and proves the first bound in \eqref{eq:reference-bounds}. Finally, using the triangle inequality and the first bound in \eqref{eq:reference-bounds} gives
the second bound in \eqref{eq:reference-bounds}.
\end{proof}

\subsection{Proof of Lemma \ref{lem:increments}}\label{subsec: lem32}
\begin{proof}[Proof of \Cref{lem:increments}]
This proof follows the structure of \cite{cai2026lastiterate} only by changing the parameter schedules.

By the definition  in \eqref{eq:reference-update} and the definition of the proximal operator gives
\begin{equation}\label{eq:proximal-subgradient}
    \bs_{t+1}
    :=\frac{(1-\beta_t)\bar{\bz}_t+\beta_t\bz_0
             -\eta_tG(\bar{\bz}_t)-\bar{\bz}_{t+1}}{\eta_t}
    \in\partial r(\bar{\bz}_{t+1}).
\end{equation}
In particular,
$\bar{\bz}_{t+1}
=\prox_{\eta_{t+1}r}(\bar{\bz}_{t+1}+\eta_{t+1}\bs_{t+1}) \iff \bar\bz_{t+1}+\eta_{t+1}\partial r(\bar\bz_{t+1})\ni \bar\bz_{t+1} + \eta_{t+1}\bs_{t+1}$. We compare this expression with the update  $\bar{\bz}_{t+2} = \prox_{\eta_{t+1}r}((1-\beta_{t+1})\bar\bz_{t+1} +\beta_{t+1}\bz_0- \eta_{t+1}G(\bar\bz_{t+1}))$. By \Cref{fact: sces}, we have $\eta_{t+1}/\eta_t-\beta_{t+1}\geq0$. Nonexpansiveness of the proximal operator and substitution of \eqref{eq:proximal-subgradient} give the first inequality below. Collecting the coefficients of $\bar{\bz}_{t+1}$, $\bar{\bz}_t$, and $\bz_0$ gives the equality, while the last inequality follows from the triangle inequality.
\begin{align}
&\|\bar \bz_{t+2}-\bar \bz_{t+1}\|\notag \\
&\leq \Bigg \|-\beta_{t+1}\bar \bz_{t+1}+\beta_{t+1}\bz_0-\eta_{t+1}G(\bar \bz_{t+1}) -\frac{\eta_{t+1}}{\eta_t}\big((1-\beta_t)\bar \bz_{t}+\beta_t\bar \bz_{0}-\eta_tG(\bar \bz_{t}) - \bar\bz_{t+1}\big) \Bigg\| \notag\\
&= \Bigg\| \left(\frac{\eta_{t+1}}{\eta_t}-\beta_{t+1}\right)(\bar \bz_{t+1}-\bar \bz_{t})-\eta_{t+1}\big(G(\bar \bz_{t+1})-G(\bar \bz_{t})\big) +\left(\beta_{t+1}-\frac{\eta_{t+1}}{\eta_t}\beta_t\right)(\bar \bz_{0}-\bar \bz_{t})\Bigg\| \notag\\
&\leq \Bigg\| \left(\frac{\eta_{t+1}}{\eta_t}-\beta_{t+1}\right)(\bar \bz_{t+1}-\bar \bz_{t})-\eta_{t+1}\big(G(\bar \bz_{t+1})-G(\bar \bz_{t})\big)\Bigg\| +\left|\beta_{t+1}-\frac{\eta_{t+1}}{\eta_t}\beta_t\right|\|\bar \bz_{0}-\bar \bz_{t}\|. \label{eq: referdisplacement}
\end{align}
We next estimate the first norm on the right-hand side. Using Lemma \ref{lem: normsum} with $\bu=\bar\bz_{t+1}$, $\bs=\bar\bz_t$, $\alpha = \frac{\eta_{t+1}}{\eta_t} - \beta_{t+1}$, $\gamma=\eta_{t+1}$ yields
\begin{align*}
&\Bigg\| \left(\frac{\eta_{t+1}}{\eta_t}-\beta_{t+1}\right)(\bar \bz_{t+1}-\bar \bz_{t})-\eta_{t+1}\big(G(\bar \bz_{t+1})-G(\bar \bz_{t})\big)\Bigg\|^2\\
&\leq \left[\left(\frac{\eta_{t+1}}{\eta_t}-\beta_{t+1}\right)^2 +\eta_{t+1}^2L^2 \right]
\|\bar{\bz}_{t+1}-\bar{\bz}_t\|^2.
\end{align*}
Taking square roots, substituting this estimate into \eqref{eq: referdisplacement}, and using  \Cref{lem:boundedness} gives
\begin{align}\label{eq:increment-general-recursion}
\|\bar{\bz}_{t+2}-\bar{\bz}_{t+1}\|
&\leq
\sqrt{\left(\frac{\eta_{t+1}}{\eta_t}-\beta_{t+1}\right)^2+\eta_{t+1}^2L^2}\,
\|\bar{\bz}_{t+1}-\bar{\bz}_t\|
\notag \\
&\quad+ 
\frac52\left|\beta_{t+1}-\frac{\eta_{t+1}}{\eta_t}\beta_t\right|\|\bz_0-\bz^\star\|.
\end{align}
By \cref{fact: sces}, we have
\begin{align*}
\left(\frac{\eta_{t+1}}{\eta_t}-\beta_{t+1}\right)^2 +\eta_{t+1}^2L^2
\leq \left(1-\frac{3}{2s}\right)^2,
~~~
\text{and}
~~~
\left|\beta_{t+1} -\frac{\eta_{t+1}}{\eta_t}\beta_t\right|
\leq \frac{a}{4(t+a)(t+a+1)}.
\end{align*}
Substituting these bounds into \eqref{eq:increment-general-recursion} gives
\begin{equation}\label{eq:increment-recursion}
\|\bar{\bz}_{t+2}-\bar{\bz}_{t+1}\|
\leq\left(1-\frac{3}{2(t+a+1)}\right) \|\bar{\bz}_{t+1}-\bar{\bz}_t\|
+\frac{5a\|\bz_0-\bz^\star\|}{8(t+a)(t+a+1)}.
\end{equation}
For $t=0$, since $\bar\bz_1 = \prox_{\eta_0 r}(\bz_0 - \eta_0 G(\bz_0))$, the definition of the proximal operator gives
\begin{align*}
    \langle \bar\bz_1 - \bz_0 + \eta_0 G(\bz_0), \bz^\star-\bar\bz_1 \rangle \geq \eta_0\left( r(\bar\bz_1) - r(\bz^\star)\right)
\end{align*}
By $2\langle \bu, \bs \rangle = \|\bu+\bs\|^2 - \| \bu\|^2 - \| \bs\|^2$ and adding and subtracting $\eta_0\langle G(\bz^\star), \bz^\star-\bar\bz_1\rangle$, we get
\begin{align*}
    &\| \bz_0-\bz^\star\|^2-\|\bar\bz_1 - \bz_0\|^2 - \| \bar\bz_1-\bz^\star\|^2  + 2\eta_0\langle G(\bz_0)-G(\bz^\star), \bz^\star-\bar\bz_1\rangle \\
    &\qquad\qquad\qquad\qquad\geq 2\eta_0\left(r(\bar\bz_1) - r(\bz^\star) +\langle G(\bz^\star), \bar\bz_1-\bz^\star \rangle \right)
\end{align*}
The right-hand side is nonnegative by \eqref{eq: vi_def}
and rearranging gives
\begin{align*}
    \|\bar{\bz}_1-\bz_0\|^2
    &\leq\|\bz_0-\bz^\star\|^2-\|\bar{\bz}_1-\bz^\star\|^2
        -2\eta_0\inprod{\bar{\bz}_1-\bz^\star}
                          {G(\bz_0)-G(\bz^\star)}\\
    &\leq\|\bz_0-\bz^\star\|^2
        +\eta_0^2\|G(\bz_0)-G(\bz^\star)\|^2
    \leq(1+a^{-3/2})\|\bz_0-\bz^\star\|^2
    \leq \frac{25}{16}\|\bz_0-\bz^\star\|^2.
\end{align*}
Where the last two inequalities use $H\geq L$ and $a\geq2$, respectively. Taking square roots in the preceding estimate gives $\|\bar \bz_{1}-\bar \bz_{0}\|\leq \frac54 \|\bz_{0}-\bz^\star\|$, which proves \eqref{eq:increment-bound} for $t=0$. Suppose that, for some $t \geq 0$,
\begin{align*}
\|\bar \bz_{t+1} -\bar \bz_{t}\|
\leq
\frac{5a\|\bz_0 - \bz^\star\|}{4(t+a)}.
\end{align*}
Substituting this bound into \eqref{eq:increment-recursion} gives
\begin{align*}
\|\bar \bz_{t+2} -\bar \bz_{t+1}\|
&\leq \left(1-\frac{3}{2(t+a+1)}\right) \frac{5a\|\bz_0-\bz^\star\|}{4(t+a)} +\frac{5a\|\bz_0-\bz^\star\|}{8(t+a)(t+a+1)} \\
&=  \left(1-\frac{3}{2(t+a+1)}+\frac{1}{2(t+a+1)}\right)\frac{5a\|\bz_0-\bz^\star\|}{4(t+a)} \\
&= \frac{5a\|\bz_0-\bz^\star\|}{4(t+a+1)}.
\end{align*}
This completes the induction and proves \eqref{eq:increment-bound}. 

Finally, setting $t\leftarrow t-1$ in \eqref{eq:proximal-subgradient} and rearranging gives
\begin{align*}
G(\bar \bz_T)+\bs_T = G(\bar \bz_T)-G(\bar \bz_{T-1})
+\frac{\beta_{T-1}(\bz_0-\bar \bz_{T-1})+\bar \bz_{T-1}-\bar \bz_{T}}{\eta_{T-1}}.
\end{align*}
Therefore, the triangle inequality and Lipschitzness gives
\begin{align*}
\|G(\bar{\bz}_t)+\bs_t\|
&\leq L\|\bar{\bz}_t-\bar{\bz}_{t-1}\|
+\frac{\beta_{t-1}\|\bz_0-\bar{\bz}_{t-1}\|
+\|\bar{\bz}_t-\bar{\bz}_{t-1}\|}{\eta_{t-1}}\\
&\leq \frac{5aL\|\bz_0-\bz^\star\|}{4(t-1+a)}+H(t-1+a)^{3/4}
\left(\frac{a}{t-1+a}\cdot\frac52\|\bz_0-\bz^\star\|+\frac{5a\|\bz_0-\bz^\star\|}{4(t-1+a)}\right) \\
&= \frac{5aL\|\bz_0-\bz^\star\|}{4(t-1+a)}
+\frac{15aH\|\bz_0-\bz^\star\|}{4(t-1+a)^{1/4}} \\
&= \frac{5aH\|\bz_0-\bz^\star\|}{2(t-1+a)^{1/4}}
\left(\frac32 +\frac{L}{2H(t-1+a)^{3/4}} \right).
\end{align*}
The second inequality uses \Cref{lem:boundedness}, \eqref{eq:increment-bound}, $\beta_{t-1}=a/(t-1+a)$, and $\eta_{t-1}^{-1}=H(t-1+a)^{3/4}$.
Since $\bs_t\in\partial r(\bar{\bz}_t)$, this proves \eqref{eq:tangent-residual-bound}.
\end{proof}

\subsection{Proof of Lemma \ref{lem:tracking}}\label{subsec: lem33}
\begin{proof}[Proof of Lemma \ref{lem:tracking}]
By the Young's inequality and \Cref{lem:boundedness}, we have
\begin{equation}\label{eq:radius-comparison}
\|\bz_t-\bz_0\|^2\leq 2\|\bz_t-\bar{\bz}_t\|^2 + 2\|\bz_0-\bar{\bz}_t\|^2
\leq2\|\bz_t-\bar{\bz}_t\|^2
+\frac{25}{2}\|\bz_0-\bz^\star\|^2.
\end{equation}
Using the definitions of $\bz_{t+1}$, $\bar\bz_{t+1}$, and the
nonexpansiveness of the proximal operator, we obtain
\begin{align*}
 \|\bz_{t+1}-\bar{\bz}_{t+1}\|^2 
 &= \|\prox_{\eta_t r}\big((1-\beta_t)\bz_t+\beta_t\bz_0 - \eta_t\gtil(\bz_t)\big) \\
 &\quad-\prox_{\eta_t r}\big((1-\beta_t)\bar{\bz}_t+\beta_t\bz_0-\eta_tG(\bar{\bz}_t)\big)\|^2 \\
&\leq \big\|(1-\beta_t)(\bz_t-\bar{\bz}_t)
-\eta_t\big(\gtil(\bz_t) -G(\bar{\bz}_t)\big)\big\|^2.
\end{align*}
We add and subtract $G(\bz_t)$ inside the norm and take conditional expectation. Since the stochastic oracle is unbiased, the noise cross term vanishes, yielding
\begin{align}
\Et\|\bz_{t+1}-\bar{\bz}_{t+1}\|^2
&\leq
\big\|(1-\beta_t)(\bz_t-\bar{\bz}_t) -\eta_t\big(G(\bz_t)-G(\bar{\bz}_t)\big)\big\|^2 
+\eta_t^2\Et\|\gtil(\bz_t)-G(\bz_t)\|^2.
\label{eq: referencing}
\end{align}
We use Lemma \ref{lem: normsum} with $\bu=\bz_t$, $\bs=\bar\bz_t$, $\alpha=1-\beta_t$, $\gamma=\eta_t$ to get
\begin{align*}
\big\|
(1-\beta_t)(\bz_t-\bar{\bz}_t) - \eta_t(G(\bz_t)-G(\bar{\bz}_t))
\big\|
\leq \sqrt{(1-\beta_t)^2+\eta_t^2L^2}\|\bz_t-\bar{\bz}_t\|.
\end{align*}
Next, \Cref{asp: 2} and \eqref{eq:radius-comparison} bound the last term
on the right-hand side as
\begin{align}\label{eq: bg_bd0}
\Et\|\gtil(\bz_t)-G(\bz_t)\|^2
\leq B^2\|\bz_t-\bz_0\|^2+\sigma^2 \leq 2B^2\|\bz_t-\bar{\bz}_t\|^2
+\frac{25}{2}B^2\|\bz_0-\bz^\star\|^2+\sigma^2.
\end{align}
Substituting these two bounds into \eqref{eq: referencing} gives
\begin{align}\label{eq: bhj5}
\Et\|\bz_{t+1}-\bar{\bz}_{t+1}\|^2
&\leq\big((1-\beta_t)^2+\eta_t^2(L^2+2B^2)\big) \|\bz_t-\bar{\bz}_t\|^2
+\eta_t^2\left( \frac{25}{2}B^2\|\bz_0-\bz^\star\|^2+\sigma^2 \right). 
\end{align}
For the rest of the proof let us set 
\begin{equation}\label{eq: v_def}
    V_\star = \frac{25}{2}B^2\|\bz_0-\bz^\star\|^2+\sigma^2.
\end{equation}
For $t\geq1$, \Cref{fact: sces} gives
\begin{align*}
(1-\beta_t)^2+\eta_t^2(L^2+2B^2)
\leq 1-\frac{a}{t+a}.
\end{align*}
Taking expectation in \eqref{eq: bhj5} and using the bound together with \eqref{eq: v_def} yields
\begin{equation}\label{eq:tracking-recursion}
\E\|\bz_{t+1}-\bar{\bz}_{t+1}\|^2
\leq\left(1-\frac{a}{t+a}\right)
\E\|\bz_t-\bar{\bz}_t\|^2
+\frac{V_\star}{H^2(t+a)^{3/2}},
\qquad t\geq1.
\end{equation}
At $t=0$, $\bz_0=\bar{\bz}_0$. Substituting this into \eqref{eq: referencing} and using $\eta_0^2=1/(H^2a^{3/2})$ gives
\begin{align*}
\E\|\bz_1-\bar{\bz}_1\|^2
\leq \eta_0^2\E\|\gtil(\bz_0)-G(\bz_0)\|^2 \leq \eta_0^2 V_\star =\frac{V_\star} {H^2a^{3/2}},
\end{align*}
where the second inequality used \eqref{eq: bg_bd0} and \eqref{eq: v_def}.
Thus, \eqref{eq:tracking-recursion} also holds at $t=0$.

We now prove \eqref{eq:tracking-rate} by induction. The case $t=0$ follows from $\bz_0=\bar{\bz}_0$. Suppose that, for some $t\geq0$,
\begin{align*}
\E\|\bz_t-\bar{\bz}_t\|^2
\leq \frac{V_\star}{H^2(a-\frac12)\sqrt{t+a}}.
\end{align*}
Since $1-\frac{a}{t+a}\geq0$, substituting the inductive assumption into
\eqref{eq:tracking-recursion} gives
\begin{align*}
\E\|\bz_{t+1}-\bar{\bz}_{t+1}\|^2
&\leq \left(1-\frac{a}{t+a}\right) \frac{V_\star}{H^2(a-\frac12)\sqrt{t+a}} +\frac{V_\star}{H^2(t+a)^{3/2}} \\
&= \frac{V_\star}{H^2(a-\frac12)}
\left( \frac{1}{\sqrt{t+a}}-\frac{1}{2(t+a)^{3/2}} \right) \\
&\leq \frac{V_\star}{H^2(a-\frac12)\sqrt{t+a+1}}.
\end{align*}
The last inequality follows from convexity of $s\mapsto s^{-1/2}$ for $s=t+a$, which gives $(s+1)^{-1/2}\geq s^{-1/2}-\frac12 s^{-3/2}$. This completes the induction and proves \eqref{eq:tracking-rate} after using \eqref{eq: v_def}.

Finally, taking expectations in \eqref{eq:radius-comparison} and using \eqref{eq:tracking-rate} gives
\begin{align*}
\E\|\bz_t-\bz_0\|^2
\leq 2\E\|\bz_t - \bar \bz_t\|^2
+\frac{25}{2}\|\bz_0-\bz^\star\|^2 \leq \frac{25}{2}\|\bz_0-\bz^\star\|^2
+ \frac{2V_\star}{H^2(a-\frac12)\sqrt{t+a}},
\end{align*}
which gives \eqref{eq:tracking-rate} after using \eqref{eq: v_def}.
\end{proof}

\newpage
\section{Analysis for Algorithm \ref{alg: alg2}}\label{app: storm}
\begin{proof}[Proof of \Cref{th: stmain}]
By the update in \Cref{alg: alg2} and the prox-inequality in \eqref{eq:prox-optimality}, for every $t\geq 1$, we have 
\begin{align}\label{eq: stprop}
\bs_{t} =\frac{(1-\beta_{t-1})\bz_{t-1}+\beta_{t-1}\bz_0-\bz_{t}}{\eta_{t-1}} -\bv_{t-1}
\in \partial r(\bz_{t}).
\end{align}
The tangent residual satisfies
\begin{align*}
\dist \big(0,G(\bz_{t})+\partial r(\bz_{t})\big) 
& \leq \|G(z_{t})+\bs_{t}\| \\
&= \Big\|G(\bz_{t}) +\frac{(1-\beta_{t-1})\bz_{t-1}+\beta_{t-1}\bz_0-\bz_{t}}{\eta_{t-1}} -\bv_{t-1} \Big\|\\
&\leq \|G(\bz_{t})-G(\bz_{t-1})\|+\frac{1}{\eta_{t-1}}\|\bz_{t-1}-\bz_{t}\|
+\frac{\beta_{t-1}}{\eta_{t-1}}\|\bz_0-\bz_{t-1}\| \\
&\quad+\|\bv_{t-1}-G(\bz_{t-1})\|\\
&\leq(L+\eta_{t-1}^{-1})\|\bz_{t}-\bz_{t-1}\|
+\frac{\beta_{t-1}}{\eta_{t-1}}\|\bz_{t-1}-\bz_0\|
+\|\bv_{t-1}-G(\bz_{t-1})\|.
\end{align*}
The equality follows from \eqref{eq: stprop}. The second and last inequalities follow from the triangle inequality and Lipschitzness, respectively. 
Taking expectations and applying Cauchy-Schwarz gives
\begin{align*}
\E[\res(\bz_t)]
&\leq (L+\eta_{t-1}^{-1}) \sqrt{\E\|\bz_t- \bz_{t-1}\|^2}\\
&\quad+ \frac{\beta_{t-1}}{\eta_{t-1}} \sqrt{\E\|\bz_{t-1}- \bz_0\|^2}
+ \sqrt{\E\|\bv_{t-1}- G(\bz_{t-1})\|^2}.
\end{align*}
By the parameter definitions, we have $\eta_{t-1}=1/(H\sqrt{t})$ and $\beta_{t-1}/\eta_{t-1}=H/\sqrt t$. Substituting \eqref{eq: stid1} and \eqref{eq: stid2} from \Cref{lem: idbd} therefore give us
\begin{align*}
\E[\res(\bz_t)]
&\leq \frac{16(L+H\sqrt{t}) \sqrt{C}}{t+1}
+\frac{2H\sqrt{C}}{\sqrt{t}}
+\frac{\sqrt{(1024\Lambda^2+4+16B^2)C}}{\sqrt{t}} \\
&\leq \frac{\big(16L+18H+\sqrt{1024\Lambda^2+4+16B^2} \big)\sqrt{C}}{\sqrt{t}} \\
&\leq \frac{27H\sqrt{C}}{\sqrt{t}}.
\end{align*}
The second inequality uses $t+1\geq t\geq\sqrt{t}$ for $t\geq1$. The last inequality follows the definition of $H$, which gives $L\leq H/2$ and $\sqrt{1024\Lambda^2+4+16B^2}\leq H/4$. Thus, we have $16L+18H+\sqrt{1024\Lambda^2+4+16B^2} \leq 27H$.
\end{proof}

We now give the details of the analysis of \cref{alg: alg2}. It remains to control the three term, $\E\|\bz_t-\bz_\star\|$,  $\E\|\bz_t-\bz_{t-1}\|$ and  $\E\|\bv_t-G(\bz_t)\|$. The following lemma gives the bounds used in the proof of \cref{th: stmain}.

\begin{lemma}\label{lem: idbd}
Let Assumptions \ref{asp: 1}, \ref{asp: 2}, and \ref{asp: 3} hold. Then, for every $t\geq0$,
\begin{align}\label{eq: stid1}
\E\|\bz_t-\bz^\star\|^2\leq C, 
~~~ 
\E\|\bz_t-\bz_0\|^2\leq 4 C,
~~~
\E\|\bv_t-G(\bz_t)\|^2\leq \frac{(1024\Lambda^2+4+16B^2)C}{t+1},
\end{align}
and, for every $t\geq1$,
\begin{align}\label{eq: stid2}
\E\|\bz_t-\bz_{t-1}\|^2 \leq \frac{256C}{(t+1)^2}.
\end{align}
Where we define $C$ as $C = \max\left\{\sigma^2, 16\|\bz_0-\bz^\star\|^2\right\}$.
\end{lemma}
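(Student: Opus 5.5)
We will prove all four bounds at once by a single strong induction on $t$. The hypothesis at step $t$ is that \eqref{eq: stid1} holds for all indices up to $t$, and \eqref{eq: stid2} holds for all indices from $1$ up to $t$. The three recursions of \Cref{lem: stdis} (full version \Cref{lem: stdis_app}), \Cref{lem: esterr} and \Cref{lem: stbd} are coupled, so no single quantity can be controlled in isolation. The induction passes the current bounds on all three sequences forward simultaneously. The constant $C=\max\{\sigma^2,16\|\bz_0-\bz^\star\|^2\}$ is chosen so that $\|\bz_0-\bz^\star\|^2\le C/16$ and $\sigma^2\le C$. The choice of $H$ is what makes the stepsizes small enough that every inductive step closes with the same constants.

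\textbf{Base cases.} At $t=0$ we have $\E\|\bz_0-\bz^\star\|^2\le C/16$ and $\E\|\bv_0-G(\bz_0)\|^2\le\sigma^2\le C$. The latter uses \Cref{asp: 2} at $\bz_0$, which Assumption~\ref{asp: 3} implies as remarked. For $\bz_1=\prox_{\eta_0 r}(\bz_0-\eta_0\bv_0)$ we use nonexpansiveness against $\bz^\star=\prox_{\eta_0 r}(\bz^\star-\eta_0G(\bz^\star))$. This gives $\E\|\bz_1-\bz_0\|^2\lesssim \|\bz_0-\bz^\star\|^2+\eta_0^2\sigma^2$, which is at most $256C/4$ once $\eta_0=1/H$ is small. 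It also gives $\E\|\bz_1-\bz^\star\|^2\le C$.

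\textbf{Inductive step, in order.} First, the distance bound. Take expectations in \Cref{lem: stbd} with $\beta_t=1/(t+1)$ and $\eta_t^2/\beta_t=1/H^2$, and insert the hypothesis on $\E\|\bv_t-G(\bz_t)\|^2$. This yields
\[
\E\|\bz_{t+1}-\bz^\star\|^2\le \Bigl(1-\tfrac{\beta_t}{2}\Bigr)C+\beta_t\Bigl(\tfrac{C}{8}+\tfrac{2(1024\Lambda^2+4+16B^2)C}{H^2}\Bigr).
\]
Because $H^2\ge 16(1024\Lambda^2+4+16B^2)$, the bracket is at most $C/2$, so the right-hand side is at most $C$. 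Young's inequality then gives $\E\|\bz_{t+1}-\bz_0\|^2\le 2C+2C/16\le 4C$.

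Second, the estimator bound. Take full expectations in \Cref{lem: esterr} and insert $\E\|\bz_{t+1}-\bz_t\|^2\le 256C/(t+2)^2$, which will be established in the third step, so that step must precede this one. The error $\E\|\bv_{t+1}-G(\bz_{t+1})\|^2$ is then bounded by
\[
\Bigl(\tfrac{t+1}{t+2}\Bigr)^2\tfrac{KC}{t+1}+\tfrac{512\Lambda^2C}{(t+2)^2}+\tfrac{2(\sigma^2+4B^2C)}{(t+2)^2},\qquad K:=1024\Lambda^2+4+16B^2 .
\]
The first term equals $\frac{KC}{t+2}\cdot\frac{t+1}{t+2}$. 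The remaining terms are at most $\frac{(512\Lambda^2+2+8B^2)C}{(t+2)^2}=\frac{KC}{2(t+2)^2}\le\frac{KC}{(t+2)^2}$. Together these give at most $KC/(t+2)$. The constant $1024$ is designed exactly to absorb the factor $2\cdot256$.

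Third, the increment bound, which should be done before the estimator step. Use the explicit form of \Cref{lem: stdis_app}: its contraction factor is $1-a/(t+1)$ with $a>1$, its forcing terms are $\Theta(t^{-2})\cdot KC/t$ from the estimator hypothesis and $\Theta(t^{-3})\cdot(4C+\sigma^2)$ from the hypothesis at index $t-1$. Then check
\[
\Bigl(1-\tfrac{a}{t+1}\Bigr)\tfrac{256C}{(t+1)^2}+\tfrac{c_1C}{(t+1)^3}\le\tfrac{256C}{(t+2)^2}.
\]
This reduces to $a\cdot256-c_1\ge 2\cdot256$ asymptotically, that is, $256(a-2)\ge c_1$ to leading order.

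\textbf{Main obstacle.} The hardest part is the increment bound in the third step, the $O(1/t^2)$ rate. The contraction exponent $a$ must exceed $2$ with enough margin to swallow the forcing constant $c_1$. That constant involves $\Lambda^2/H^2$ and $B^2/H^2$, and $H$ was chosen precisely so that $c_1$ is a small multiple of $256$. I would first extract the exact $a$ and $c_1$ from \Cref{lem: stdis_app}, using $\eta_{t+1}/\eta_t-\beta_{t+1}$ with $\eta_t\propto (t+1)^{-1/2}$. If $a$ turns out to be only slightly above $2$, I would handle small $t$ separately by direct verification, using the crude bound from the base case. I would then rely on Facts \ref{fact: stes}-type numerical estimates for $t$ beyond a threshold.
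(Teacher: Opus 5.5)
Your proposal follows the paper's proof: a simultaneous induction through \Cref{lem: stbd}, \Cref{lem: esterr} and \Cref{lem: stdis_app}, with the same constants $C$, $H$ and $K$. The differences are bookkeeping only. The paper carries the estimator bound at index $t-1$ and derives, in order, $\bv_t$, then $\bz_{t+1}-\bz^\star$, then $\bz_{t+1}-\bz_t$. This avoids having to produce $\E\|\bv_1-G(\bz_1)\|^2\le K C/2$ as part of the base case, which your indexing needs but does not state; your estimator step applied at $t=0$ supplies it.

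The increment step you left open closes as you predict. The contraction factor is at most $1-\frac{5}{2(t+1)}+\frac{25}{16(t+1)^2}+\frac{9}{16(t+1)^3}$, so $a=\frac52$. The forcing constant is at most $40+\frac{41K}{2H^2}\le 64$, which is below $256(a-2)=128$. \Cref{fact: stes} verifies the resulting polynomial inequality exactly for every $t\ge1$, so no separate small-$t$ argument is needed.
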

\begin{proof}[Proof of \Cref{lem: idbd}.] By the definition of $C$, we have $C\geq\sigma^2$, $\|\bz_0-\bz^\star\|^2\leq C/16$ and
\begin{align*}
\E\|\bv_0-G(\bz_0)\|^2 \leq \sigma^2 \leq C \leq(1024\Lambda^2+4+16B^2)C.
\end{align*}
Where the first inequality follows from \Cref{asp: 2} with definition of $\bv_0=\gtil(\bz_0)$, while the last inequality follows from $1024\Lambda^2+4+16B^2\geq4$.

We first verify the base case at $t=1$. Together with prox-inequality  of $\bz_1=\prox_{\eta_0 r}(\bz_0-\eta_0\bv_0)$ and $\bz^\star=\prox_{\eta_t r}(\bz^\star-\eta_tG(\bz^\star))$, using  nonexpansiveness of proximal operator gives,
\begin{align*}
\E\|\bz_1-\bz^\star\|^2 
&\leq \E\big\|\bz_0-\frac{1}{H}\bv_0- \bz^\star+\eta_0 G(\bz^\star)\big\|^2 \\
&\leq 2\Big\|\bz_0-\bz^\star-\frac{1}{H} \big(G(\bz_0)-G(\bz^\star)\big)\Big\|^2
+\frac2{H^2}\E\|\bv_0-G(\bz_0)\|^2\\
&\leq 2\left(1+\frac{L^2}{H^2}\right)\|\bz_0-\bz^\star\|^2
+\frac2{H^2}\E\|\bv_0-G(\bz_0)\|^2\\
&\leq 2\left(1+\frac{1}{4}\right)\frac{C}{16}+\frac{C}{8}
=\frac{9C}{32}\leq C.
\end{align*}
Here, the second inequality follows from Young's inequality, while the third inequality can obtain by \cref{lem: normsum} with $\alpha=1, \gamma=1/H$, $\bu=\bz_0$ and $\bs=z^\star$. The last inequality uses the definition of $H$, which implies $L^2/H^2\leq1/4$ and $(1024\Lambda^2+4+16B^2)/H^2\leq1/16$.

Using Young's inequality implies
\begin{align*}
\E\|\bz_1-\bz_0\|^2
&\leq2 \E\|\bz_1- \bz^\star\|^2+2\|\bz_0- \bz^\star\|^2\\
&\leq 2C+\frac{C}{8} \leq 64C =\frac{256C}{(1+1)^2}.
\end{align*}
These bounds finish the base case of induction at $t=1$.

Now suppose for some $t\geq1$, that
\begin{align}
\E\|\bz_t-\bz^\star\|^2 \leq C,
~~~
\E\|\bz_{t-1}-\bz^\star\|^2 \leq C, 
~~~
\E\|\bz_t-\bz_{t-1}\|^2\leq\frac{256C}{(t+1)^2}, \notag\\
~~~
\text{and}
~~~
\E\|\bv_{t-1}-G(\bz_{t-1})\|^2
\leq\frac{(1024\Lambda^2+4+16B^2)C}{t}. \label{eq: idsp1}
\end{align}

We first estimate the second term in \eqref{eq: stid1}. By Young's inequality, we have
\begin{align}
\E\|\bz_t-\bz_0\|^2
& \leq 2\E\|\bz_t- \bz^\star\|^2+2\|\bz_0-\bz^\star\|^2 \notag\\
& \leq 2C +\frac{C}{8}  \notag\\
&\leq 4C, \label{eq: zrid}
\end{align}
where the second inequality follows the property in \eqref{eq: idsp1} and definition of $C$. The same argument can be applied to $\E\|\bz_{t-1}-\bz_0\|^2\leq4C$.

We next show the last term in \eqref{eq: stid1}. Recall the recursion from \cref{lem: esterr},
\begin{align*}
\Et\|\bv_t-G(\bz_t)\|^2
&\leq(1-\alpha_t)^2\|\bv_{t-1}-G(\bz_{t-1})\|^2 + 2\Lambda^2\|\bz_t-\bz_{t-1}\|^2 \\
&\quad+ 2\alpha_t^2\big(\sigma^2+B^2\|\bz_{t-1}-\bz_0\|^2\big).
\end{align*}
Taking expectations and using \eqref{eq: idsp1} and \eqref{eq: zrid} gives,
\begin{align}
\E\|\bv_t- G(\bz_t)\|^2 
&\leq \left(1- \frac{1}{t+1}\right)^2\frac{(1024\Lambda^2+4+16B^2)C}{t} \notag\\
&\quad +\frac{512\Lambda^2C}{(t+1)^2} + \frac{2(C+4B^2C)}{(t+1)^2} \notag\\
&= \frac{(1024\Lambda^2+4+16B^2)C}{t+1} -\frac{(1024\Lambda^2+4+16B^2)C}{2(t+1)^2}
\label{eq: ideser}
\end{align}
The equality follows from \Cref{fact: stes}.

Next, we verify first term in \eqref{eq: stid1}. Taking expectations in \Cref{lem: stbd} give us
\begin{align}
\E\|\bz_{t+1}-\bz^\star\|^2
\leq \left(1-\frac{\beta_t}{2}\right) \E\|\bz_t-\bz^\star\|^2 
+ 2\beta_t \|\bz_0-\bz^\star\|^2+\frac{2\eta_t^2}{\beta_t}\E\|\bv_t-G(\bz_t)\|^2.
\end{align}
Using \eqref{eq: ideser} and definition of $C$ which implies $\|\bz_0-\bz^\star\|^2 \leq C/16$, and $\eta_t^2/\beta_t=1/H^2$,
\begin{align}
\E\|\bz_{t+1}-\bz^\star\|^2
&\leq \left(1-\frac{1}{2(t+1)}\right) C
+\frac{C}{8(t+1)}
+\frac{2(1024\Lambda^2+4+16B^2)C}{H^2(t+1)} \notag\\
&\leq \left(1- \frac{1}{2(t+1)}+ \frac{1}{8(t+ 1)}+ \frac{1}{8(t+1)}\right)C \notag\\
&=\left(1-\frac{1}{4(t+1)}\right)C\leq C.
\end{align}
The second inequality uses $(1024\Lambda^2+4+16B^2)/H^2 \leq 1/16$.

Finally, we estimate \eqref{eq: stid2}. Bring the recursion in \cref{lem: stdis}, we have 
\begin{align*}
\E\|\bz_{t+1}-\bz_t\|^2
&\leq\left(1+\frac{1}{4(t+1)}\right) 
\left(\left(\frac{\eta_t}{\eta_{t-1}}-\beta_t\right)^2+\eta_t^2L^2\right)
\E\|\bz_t-\bz_{t-1}\|^2\\
&\quad+ 10(t+1) \left(\frac{\eta_t}{\eta_{t-1}}\beta_{t-1}-\beta_t\right)^2 \E\|\bz_{t-1}-\bz_0\|^2\\
&\quad+ 10(t+1) \eta_t^2\alpha_t^2
\E\|\bv_{t-1} -G(\bz_{t-1})\|^2\\
&\quad+ \eta_t^2 \Big( 2\Lambda^2\E\|\bz_t-\bz_{t-1}\|^2 + 2\alpha_t^2\big(\sigma^2+B^2\E\|\bz_{t-1}-\bz_0\|^2\big) \Big).
\end{align*}
Applying the \eqref{eq: idsp1}, \eqref{eq: zrid} and definition of $\eta_t$ and $\beta_t$ with $\sigma^2 \leq C$, together with \cref{fact: stes} gives,
\begin{align*}
\E\|\bz_{t+1}- \bz_t\|^2
&\leq
\frac{256C}{(t+1)^2}
\left(1+ \frac{1}{4(t+1)} \right)
\left(\left(1- \frac{3}{2(t+1)} \right)^2+ \frac{1}{4(t+1)} \right)\\
&\quad+ \left(40 +\frac{41(1024\Lambda^2+4+16B^2)}{2H^2} \right)
\frac{C}{(t+1)^3}\\
&\leq
\frac{256C}{(t+1)^2}
\left(1+\frac{1}{4(t+1)}\right)
\left(\left(1- \frac{3}{2(t+1)} \right)^2+ \frac{1}{4(t+1)} \right)
+\frac{64C}{(t+1)^3}. 
\end{align*}
Moreover, applying \cref{fact: stes} give us,
\begin{align}
\E\|\bz_{t+1}-\bz_t\|^2
\leq\frac{256C}{(t+2)^2}. \label{eq: iddis}
\end{align}
These estimates give \eqref{eq: idsp1} with $t$ replaced by $t+1$. This completes the induction and proves \eqref{eq: stid1} and \eqref{eq: stid2}.
\end{proof}

We prove these bounds through three estimates followed by an induction.
\subsection{Proof of \Cref{lem: esterr}}\label{app: lem_storm1}
\begin{lemma}\label{lem: esterr_app}
Under \Cref{asp: 1} and \ref{asp: 3}, for every $t\geq1$, the estimator in \cref{alg: alg2} satisfies
\begin{align*}
\Et\|\bv_t-G(\bz_t)\|^2
&\leq(1-\alpha_t)^2\|\bv_{t-1}-G(\bz_{t-1})\|^2 + 2\Lambda^2\|\bz_t-\bz_{t-1}\|^2 \\
&\quad+ 2\alpha_t^2\big(\sigma^2+B^2\|\bz_{t-1}-\bz_0\|^2\big).
\end{align*}
\end{lemma}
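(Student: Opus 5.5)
We use the standard STORM decomposition of the estimator error. Subtracting $G(\bz_t)$ from the update of $\bv_t$ and adding and subtracting $(1-\alpha_t)G(\bz_{t-1})$ gives
\begin{align*}
\bv_t-G(\bz_t)
&=(1-\alpha_t)\big(\bv_{t-1}-G(\bz_{t-1})\big)
+\underbrace{(1-\alpha_t)\Big(\gtil(\bz_t,\xi_t)-\gtil(\bz_{t-1},\xi_t)-G(\bz_t)+G(\bz_{t-1})\Big)+\alpha_t\Big(\gtil(\bz_{t-1},\xi_t)-G(\bz_{t-1})\Big)}_{=: \,\bu_t} \\
&\quad+\alpha_t\Big(\gtil(\bz_t,\xi_t)-\gtil(\bz_{t-1},\xi_t)\Big)\cdot 0,
\end{align*}
where the last line is only bookkeeping. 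More cleanly, we write $\bv_t-G(\bz_t)=(1-\alpha_t)(\bv_{t-1}-G(\bz_{t-1}))+\bu_t$ with
\[
\bu_t=(1-\alpha_t)\big(\gtil(\bz_t,\xi_t)-\gtil(\bz_{t-1},\xi_t)-(G(\bz_t)-G(\bz_{t-1}))\big)+\alpha_t\big(\gtil(\bz_t,\xi_t)-G(\bz_t)\big).
\]
Since $\bz_t,\bz_{t-1},\bv_{t-1}$ are $\fil_t$-measurable and the oracle is unbiased at both points, $\Et[\bu_t]=0$. The cross term therefore vanishes under $\Et$, and
\[
\Et\|\bv_t-G(\bz_t)\|^2=(1-\alpha_t)^2\|\bv_{t-1}-G(\bz_{t-1})\|^2+\Et\|\bu_t\|^2 .
\]

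It remains to bound $\Et\|\bu_t\|^2$, which we split with Young's inequality. The obstacle is arranging the split so the variance term is evaluated at $\bz_{t-1}$, as in the statement, rather than at $\bz_t$. To achieve this we group $\bu_t$ instead as
\[
\bu_t=\big(\gtil(\bz_t,\xi_t)-\gtil(\bz_{t-1},\xi_t)-(G(\bz_t)-G(\bz_{t-1}))\big)+\alpha_t\big(\gtil(\bz_{t-1},\xi_t)-G(\bz_{t-1})\big).
\]
This identity checks directly: $\bv_t-G(\bz_t)-(1-\alpha_t)(\bv_{t-1}-G(\bz_{t-1}))=\gtil(\bz_t)-(1-\alpha_t)\gtil(\bz_{t-1})-G(\bz_t)+(1-\alpha_t)G(\bz_{t-1})$. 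Applying $\|a+b\|^2\le 2\|a\|^2+2\|b\|^2$ then gives two terms. For the first, the variance is at most the second moment, so
\[
\Et\|\gtil(\bz_t,\xi_t)-\gtil(\bz_{t-1},\xi_t)-(G(\bz_t)-G(\bz_{t-1}))\|^2\le \E_\xi\|\gtil(\bz_t,\xi)-\gtil(\bz_{t-1},\xi)\|^2\le\Lambda^2\|\bz_t-\bz_{t-1}\|^2
\]
by \Cref{asp: 3}. For the second, \Cref{asp: 2} applied at the point $\bz_{t-1}$ with the fresh sample $\xi_t$ gives $\Et\|\gtil(\bz_{t-1},\xi_t)-G(\bz_{t-1})\|^2\le\sigma^2+B^2\|\bz_{t-1}-\bz_0\|^2$. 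Here we must justify that the variance bound holds for a fresh independent sample at any $\fil_t$-measurable point. We take this as the intended meaning of \Cref{asp: 2}, which is stated pointwise for the oracle, and note that \Cref{lem: idbd} explicitly assumes \Cref{asp: 2} alongside \Cref{asp: 3}.

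Combining these bounds yields exactly
\[
\Et\|\bv_t-G(\bz_t)\|^2\le(1-\alpha_t)^2\|\bv_{t-1}-G(\bz_{t-1})\|^2+2\Lambda^2\|\bz_t-\bz_{t-1}\|^2+2\alpha_t^2\big(\sigma^2+B^2\|\bz_{t-1}-\bz_0\|^2\big).
\]
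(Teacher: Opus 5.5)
Your proof is correct and is essentially the paper's argument: the same decomposition $\bv_t-G(\bz_t)=(1-\alpha_t)(\bv_{t-1}-G(\bz_{t-1}))+\bu_t$ with the cross term killed by $\Et$, then the same Young split of $\bu_t$ (\Cref{lem: noiseupper}) into a mean-square-smoothness term and an $\alpha_t^2$-weighted variance term at $\bz_{t-1}$; the paper likewise uses, without comment, the bound $\sigma^2+B^2\|\bz_{t-1}-\bz_0\|^2$ for the fresh sample $\xi_t$. The only blemish is your first display, whose underbraced $\bu_t$ omits $\alpha_t\big(\gtil(\bz_t,\xi_t)-\gtil(\bz_{t-1},\xi_t)-G(\bz_t)+G(\bz_{t-1})\big)$ (the ``$\cdot 0$'' line does not repair this), so delete it; your two later expressions for $\bu_t$ are correct.
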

\begin{proof}[Proof of \Cref{lem: esterr}]
Throughout the proof, we use the notations
\begin{align*}
\gtil(\bz_t) = \gtil(\bz_t, \xi_t) 
~~~\text{and}~~~
\gtil(\bz_{t-1}) = \gtil(\bz_{t-1}, \xi_t).
\end{align*}
By the estimator update, we have
\begin{align*}
\bv_t-G(\bz_t)
&= \gtil(\bz_t) +(1-\alpha_t)\big(\bv_{t-1}-\gtil(\bz_{t-1})\big)-G(\bz_t) \\
&= (1-\alpha_t)\big(\bv_{t-1}-G(\bz_{t-1})\big) +(1-\alpha_t)\big(G(\bz_{t-1})-\gtil(\bz_{t-1})\big) \\ 
&\quad+ \gtil(\bz_t)-G(\bz_t).
\end{align*}
Since the last two terms on the right-hand side is $0$ under conditional  expectation and since $\bv_{t-1}-G(\bz_{t-1})$ is measurable under the conditioning of $\mathbb{E}_t$,we have
\begin{align*}
\Et\langle \bv_{t-1}-G(\bz_{t-1})&, \gtil(\bz_t)-G(\bz_t) + (1-\alpha_t)(G(\bz_{t-1})-\gtil(\bz_{t-1}))\rangle =0.
\end{align*}
Thus, expanding the squared norm and taking expectations give
\begin{align*}
\Et\|\bv_t&-G(\bz_t) \|^2 =(1-\alpha_t)^2\|\bv_{t-1}-G(\bz_{t-1})\|^2 \notag\\
&\quad+ \Et\left\|
\gtil(\bz_t)-G(\bz_t)+(1-\alpha_t)(G(\bz_{t-1})-\gtil(\bz_{t-1})\right\|^2.
\end{align*}
Applying \Cref{lem: noiseupper} to last term on right-hand gives the assertion.
\end{proof}

\subsection{Proof of \Cref{lem: stbd}}\label{app: proof_bdd}
\begin{lemma}[Restatement of Lemma \ref{lem: stbd}]\label{lem: stbd_app}
Let \Cref{asp: 1} hold. For every $t\geq0$ we have that
\begin{align*}
\|\bz_{t+1}-\bz^\star\|^2
\leq \left(1-\frac{\beta_t}{2}\right)\|\bz_t-\bz^\star\|^2 
+ 2\beta_t \|\bz_0-\bz^\star\|^2+\frac{2\eta_t^2}{\beta_t}\|\bv_t-G(\bz_t)\|^2.
\end{align*}
\end{lemma}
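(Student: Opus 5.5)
Compare $\bz_{t+1}$ with the fixed point $\bz^\star=\prox_{\eta_t r}(\bz^\star-\eta_t G(\bz^\star))$ through the prox-inequality \eqref{eq:prox-optimality}, instead of only through nonexpansiveness, so that monotonicity can be used to remove the Lipschitz term. Write $\bw_t=(1-\beta_t)\bz_t+\beta_t\bz_0-\eta_t\bv_t$. Applying \eqref{eq:prox-optimality} at $\bz_{t+1}=\prox_{\eta_t r}(\bw_t)$ with $\bu=\bz^\star$, and combining with the solution inequality \eqref{eq: vi_def} at $\bz=\bz_{t+1}$, gives
\begin{align*}
\langle \bz_{t+1}-(1-\beta_t)\bz_t-\beta_t\bz_0, \bz^\star-\bz_{t+1}\rangle
\geq \eta_t\langle \bv_t-G(\bz^\star), \bz_{t+1}-\bz^\star\rangle .
\end{align*}
Next split $\bv_t-G(\bz^\star)=(\bv_t-G(\bz_t))+(G(\bz_t)-G(\bz_{t+1}))+(G(\bz_{t+1})-G(\bz^\star))$. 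Monotonicity makes the last piece contribute a nonnegative term, which can be dropped.

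For the left-hand side, write $(1-\beta_t)\bz_t+\beta_t\bz_0-\bz_{t+1}=(1-\beta_t)(\bz_t-\bz_{t+1})+\beta_t(\bz_0-\bz_{t+1})$ and use the three-point identity $2\langle \bu,\bs\rangle=\|\bu+\bs\|^2-\|\bu\|^2-\|\bs\|^2$ on each part. This yields
\begin{align*}
\|\bz_{t+1}-\bz^\star\|^2 \leq (1-\beta_t)\|\bz_t-\bz^\star\|^2+\beta_t\|\bz_0-\bz^\star\|^2-(1-\beta_t)\|\bz_{t+1}-\bz_t\|^2-\beta_t\|\bz_{t+1}-\bz_0\|^2 + 2\eta_t\,\mathcal{E},
\end{align*}
where $\mathcal{E}=\langle \bv_t-G(\bz_t),\bz^\star-\bz_{t+1}\rangle+\langle G(\bz_t)-G(\bz_{t+1}),\bz^\star-\bz_{t+1}\rangle$.

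The main obstacle is absorbing these two inner products with the available budget. There is no martingale cancellation here, since $\bz_{t+1}$ depends on $\bv_t$ and the statement is deterministic. I would bound both terms by Young's inequality with weight $\beta_t/4$ on $\|\bz_{t+1}-\bz^\star\|^2$:
\[
2\eta_t\langle \bv_t-G(\bz_t),\bz^\star-\bz_{t+1}\rangle\leq \tfrac{\beta_t}{4}\|\bz_{t+1}-\bz^\star\|^2+\tfrac{4\eta_t^2}{\beta_t}\|\bv_t-G(\bz_t)\|^2 .
\]
The Lipschitz term is bounded by $\tfrac{\beta_t}{4}\|\bz_{t+1}-\bz^\star\|^2+\tfrac{4\eta_t^2L^2}{\beta_t}\|\bz_{t+1}-\bz_t\|^2$. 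Under the parameters of \Cref{alg: alg2}, $\eta_t^2/\beta_t=1/H^2$ and $H\geq 2L$, so $4\eta_t^2L^2/\beta_t\leq 1$. This is not quite below $1-\beta_t$, so I would rebalance the Young weights, for example splitting $\beta_t/4$ and $3\beta_t/4$, or using $\eta_t L\leq \tfrac12\sqrt{\beta_t}$ more carefully. The goal is to cancel this term against $-(1-\beta_t)\|\bz_{t+1}-\bz_t\|^2$ for $t\geq1$, where $\beta_t\leq \tfrac12$. The case $t=0$ has $\beta_0=1$ and no negative term, so it would be handled directly.

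After the cancellation, move $\tfrac{\beta_t}{2}\|\bz_{t+1}-\bz^\star\|^2$ to the left and divide by $1+\beta_t/2$, using $(1-\beta_t)/(1+\beta_t/2)\leq 1-\beta_t/2$. Also drop $-\beta_t\|\bz_{t+1}-\bz_0\|^2$, and crudely bound the remaining coefficients, for instance $\beta_t/(1+\beta_t/2)\leq 2\beta_t$. To match the stated constant $2\eta_t^2/\beta_t$ on the variance term, I would tune the Young weight (using $\beta_t$ in place of $\beta_t/4$ on the variance term) and compensate with the slack in the $\|\bz_0-\bz^\star\|^2$ coefficient. If constants still do not close, a fallback is to keep the $\|\bz_{t+1}-\bz_0\|^2$ term and use $\|\bz_{t+1}-\bz^\star\|^2\leq 2\|\bz_{t+1}-\bz_0\|^2+2\|\bz_0-\bz^\star\|^2$ to gain extra negative mass.
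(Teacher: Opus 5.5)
Your first step is correct: the prox-inequality at $\bz_{t+1}$ with $\bu=\bz^\star$, the VI at $\bz_{t+1}$, and the two three-point identities give exactly your displayed bound with error $2\eta_t\mathcal{E}$. The gap is in the absorption step, which cannot produce the stated constants as you set it up.

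Because you apply monotonicity at $(\bz_{t+1},\bz^\star)$, both cross terms in $\mathcal{E}$ are charged to $\|\bz_{t+1}-\bz^\star\|^2$, and there is almost no room there. Let $w=w_1+w_2$ be the total Young weight put on $\|\bz_{t+1}-\bz^\star\|^2$. Moving it left means dividing by $1-w$, not by $1+\beta_t/2$ (that is a sign slip). Keeping $(1-\beta_t)/(1-w)\le 1-\beta_t/2$ then forces $w\le \beta_t/(2-\beta_t)$.

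Without your fallback, the variance coefficient is $\eta_t^2/(w_1(1-w))$. Since $w_1(1-w)\le w(1-w)\le 2\beta_t(1-\beta_t)/(2-\beta_t)^2<\beta_t/2$, the constant $2\eta_t^2/\beta_t$ is unreachable even when $L=0$. Concretely, your $w_1=\beta_t/4$ yields at least $4\eta_t^2/\beta_t$. Taking $w_1=\beta_t$, or the split $\beta_t/4+3\beta_t/4$, makes the coefficient of $\|\bz_t-\bz^\star\|^2$ at least $1$.

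The slack in the $\|\bz_0-\bz^\star\|^2$ coefficient can only be used through the fallback, and there the Lipschitz term still competes for the same budget. Take $t=1$ with $H=2L$, which is permitted since $H=\max\{2L,\dots\}$ and $L$ is any Lipschitz constant. Cancelling against $-(1-\beta_1)\|\bz_2-\bz_1\|^2$ requires $w_2\ge \eta_1^2L^2/(1-\beta_1)=1/4$. Now convert the anchor term via $\|\bz_2-\bz_0\|^2\ge p\|\bz_2-\bz^\star\|^2-\frac{p}{1-p}\|\bz_0-\bz^\star\|^2$, and require anchor coefficient $1$ and variance coefficient $4\eta_1^2$. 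This forces the final divisor $D$ to satisfy $D^2-\frac54 D+\frac12\le 0$, which has no real solution. So the proposal, as designed, does not prove the stated inequality.

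The missing idea is where to charge the Lipschitz error. The target factor $1-\beta_t/2$ leaves exactly $\beta_t/2$ of slack on $\|\bz_t-\bz^\star\|^2$, so monotonicity should be applied at $(\bz_t,\bz^\star)$, not at $(\bz_{t+1},\bz^\star)$. Within your framework, this works as follows.

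Split $\bv_t-G(\bz^\star)=(\bv_t-G(\bz_t))+(G(\bz_t)-G(\bz^\star))$ and $\bz_{t+1}-\bz^\star=(\bz_t-\bz^\star)+(\bz_{t+1}-\bz_t)$. After dropping $\langle G(\bz_t)-G(\bz^\star),\bz_t-\bz^\star\rangle\ge0$, and using $\eta_t^2/\beta_t=1/H^2$, the Lipschitz error is at most $\frac{\beta_t}{2}\|\bz_t-\bz^\star\|^2+\frac{2L^2}{H^2}\|\bz_{t+1}-\bz_t\|^2$. For $t\ge1$ we have $2L^2/H^2\le\frac12\le 1-\beta_t$, so the second part cancels. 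Then use Young with weight $\beta_t/2$ on the variance term. Finally, use your fallback $-\beta_t\|\bz_{t+1}-\bz_0\|^2\le-\frac{\beta_t}{2}\|\bz_{t+1}-\bz^\star\|^2+\beta_t\|\bz_0-\bz^\star\|^2$ as the main step. This gives exactly the coefficients $1-\beta_t/2$, $2\beta_t$ and $2\eta_t^2/\beta_t$.

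The paper achieves the same thing more directly and without the prox-inequality. It starts from nonexpansiveness, then uses $\|a+b\|^2\le\frac{1}{1-\beta_t}\|a\|^2+\frac{1}{\beta_t}\|b\|^2$ with $a=(1-\beta_t)(\bz_t-\bz^\star)-\eta_t(G(\bz_t)-G(\bz^\star))$ and $b=\beta_t(\bz_0-\bz^\star)-\eta_t(\bv_t-G(\bz_t))$. Lemma~\ref{lem: normsum}, which is monotonicity at $(\bz_t,\bz^\star)$ inside the square, gives $\frac{1}{1-\beta_t}\|a\|^2\le\big(1-\beta_t+\frac{\eta_t^2L^2}{1-\beta_t}\big)\|\bz_t-\bz^\star\|^2$. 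Since $\eta_t^2L^2\le\beta_t/4$ and $\beta_t\le\frac12$, this is at most $(1-\frac{\beta_t}{2})\|\bz_t-\bz^\star\|^2$, and no cross terms ever arise.
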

\begin{proof}
By the definition of the solution, we have $-G(\bz^\star)\in\partial r(\bz^\star)$ and
$\bz^\star=\prox_{\eta_t r}(\bz^\star-\eta_tG(\bz^\star))$. Comparing with the update in \cref{alg: alg2}, nonexpansiveness of the proximal operator gives
\begin{align*}
\|\bz_{t+1}-\bz^\star\| 
&\leq \|(1-\beta_t)\bz_t+\beta_t\bz_0-\eta_t \bv_t-(\bz^\star-\eta_tG(\bz^\star))\| \\
&= \|(1-\beta_t)(\bz_t-\bz^\star)-\eta_t\big(G(\bz_t)-G(\bz^\star)\big)
+\beta_t(\bz_0-\bz^\star)-\eta_t\big(\bv_t-G(\bz_t) \big)\|.
\end{align*}
Taking the square of this inequality and using Young's inequality with parameter $\beta_t/(1-\beta_t)$ gives
\begin{align}
\|\bz_{t+1}-\bz^\star\|^2 
&\leq \frac{1}{1-\beta_t}\|(1-\beta_t)(\bz_t-\bz^\star)-\eta_t\big(G(\bz_t)-G(\bz^\star)\big)\|^2 \notag\\
&\quad +\frac{1}{\beta_t}\big\|\beta_t(\bz_0-\bz^\star)-\eta_t\big(\bv_t-G(\bz_t) \big)\big\|^2. \label{eq: bdmid}
\end{align}
We first estimate the first term on the right-hand side, using Lemma \ref{lem: normsum} with $\alpha=1-\beta_t$, $\gamma=\eta_t$, $\bu=\bz_{t+1}$ and $\bs=\bz^\star$ yield
\begin{align*}
\frac{1}{1-\beta_t}\|(1-\beta_t)(\bz_t-\bz^\star)-\eta_t\big(G(\bz_t)-G(\bz^\star)\big)\|^2 
&\leq \left(1-\beta_t+\frac{\eta_t^2L^2}{1-\beta_t} \right)\|\bz_t-\bz^\star\|^2 \\
&\leq \left(1-\frac{\beta_t}{2}\right)\|\bz_t-\bz^\star\|^2,
\end{align*}
where the second inequality follows from  \Cref{fact: stes}.

For the second term on the right-hand side of \eqref{eq: bdmid}, Young's inequality gives
\begin{align*}
\frac{1}{\beta_t}\big\|\beta_t(\bz_0-\bz^\star)-\eta_t\big(\bv_t-G(\bz_t) \big)\big\|^2 
\leq 2\beta_t \|\bz_0-\bz^\star\|^2+\frac{2\eta_t^2}{\beta_t}\|\bv_t-G(\bz_t)\|^2.
\end{align*}
Combining these two estimates with \eqref{eq: bdmid} proves the claim.
\end{proof}

\subsection{Full Expression of Lemma \ref{lem: stdis} and Its Proof}
\begin{lemma}\label{lem: stdis_app}
Under \Cref{asp: 1,asp: 3}, for every $t\geq1$, we have
\begin{align*}
\E\|\bz_{t+1}-\bz_t\|^2
&\leq\left(1+\frac{1}{4(t+1)}\right) 
\left(\left(\frac{\eta_t}{\eta_{t-1}}-\beta_t\right)^2+\eta_t^2L^2\right)
\E\|\bz_t-\bz_{t-1}\|^2\\
&\quad+ 10(t+1) \left(\frac{\eta_t}{\eta_{t-1}}\beta_{t-1}-\beta_t\right)^2 \E\|\bz_{t-1}-\bz_0\|^2\\
&\quad+ 10(t+1) \eta_t^2\alpha_t^2
\E\|\bv_{t-1} -G(\bz_{t-1})\|^2\\
&\quad+ \eta_t^2 \Big( 2\Lambda^2\E\|\bz_t-\bz_{t-1}\|^2 + 2\alpha_t^2\big(\sigma^2+B^2\E\|\bz_{t-1}-\bz_0\|^2\big) \Big).
\end{align*}
\end{lemma}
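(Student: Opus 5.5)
We compare the prox characterizations of two consecutive updates of \Cref{alg: alg2}, following the template of the proof of \Cref{lem:increments}. By \eqref{eq:prox-optimality}, $\bz_t=\prox_{\eta_t r}(\bz_t+\eta_t\bs_t)$ with $\bs_t$ as in \eqref{eq: stprop}. Compare this with $\bz_{t+1}=\prox_{\eta_t r}((1-\beta_t)\bz_t+\beta_t\bz_0-\eta_t\bv_t)$ and use nonexpansiveness. Collecting coefficients exactly as in \eqref{eq: referdisplacement}, with $G(\bar\bz_\cdot)$ replaced by $\bv_\cdot$, gives
\begin{align*}
\|\bz_{t+1}-\bz_t\|
\leq \Big\|\Big(\frac{\eta_t}{\eta_{t-1}}-\beta_t\Big)(\bz_t-\bz_{t-1})-\eta_t(\bv_t-\bv_{t-1})+\Big(\beta_t-\frac{\eta_t}{\eta_{t-1}}\beta_{t-1}\Big)(\bz_0-\bz_{t-1})\Big\|.
\end{align*}
Write $\bv_t-\bv_{t-1}=G(\bz_t)-G(\bz_{t-1})+\bb_t+\bu_t$. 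Here $\bu_t=\gtil(\bz_t)-G(\bz_t)-(1-\alpha_t)(\gtil(\bz_{t-1})-G(\bz_{t-1}))$ is conditionally mean zero given $\fil_t$, and the bias part is $\bb_t=-\alpha_t(\bv_{t-1}-G(\bz_{t-1}))$. To check this, use the STORM update: $\bv_t-\bv_{t-1}=\gtil(\bz_t)-\gtil(\bz_{t-1})-\alpha_t(\bv_{t-1}-\gtil(\bz_{t-1}))$, then add and subtract $G$ at both points.

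Square the bound and take $\Et$. Everything except $-\eta_t\bu_t$ is $\fil_t$-measurable, so the cross term with $\bu_t$ vanishes. By \Cref{lem: noiseupper}, the term $\eta_t^2\Et\|\bu_t\|^2$ is at most $\eta_t^2(2\Lambda^2\|\bz_t-\bz_{t-1}\|^2+2\alpha_t^2(\sigma^2+B^2\|\bz_{t-1}-\bz_0\|^2))$, which is exactly the last line of the claim.

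The deterministic remainder is a sum of three pieces:
\begin{itemize}
\item $X=(\frac{\eta_t}{\eta_{t-1}}-\beta_t)(\bz_t-\bz_{t-1})-\eta_t(G(\bz_t)-G(\bz_{t-1}))$;
\item $Y=(\beta_t-\frac{\eta_t}{\eta_{t-1}}\beta_{t-1})(\bz_0-\bz_{t-1})$;
\item $Z=\eta_t\alpha_t(\bv_{t-1}-G(\bz_{t-1}))$.
\end{itemize}
Apply Young's inequality $\|X+Y+Z\|^2\leq(1+\epsilon)\|X\|^2+(1+\epsilon^{-1})\cdot2(\|Y\|^2+\|Z\|^2)$ with $\epsilon=\frac{1}{4(t+1)}$. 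Then $2(1+\epsilon^{-1})=2+8(t+1)\leq10(t+1)$, which produces the factors $10(t+1)$ in front of $\|Y\|^2$ and $\|Z\|^2$. For $\|X\|^2$, \Cref{lem: normsum} with $\alpha=\frac{\eta_t}{\eta_{t-1}}-\beta_t\geq0$ (nonnegativity from \Cref{fact: stes}), $\gamma=\eta_t$, $\bu=\bz_t$, $\bs=\bz_{t-1}$ gives the factor $(\frac{\eta_t}{\eta_{t-1}}-\beta_t)^2+\eta_t^2L^2$. Taking total expectations then yields the stated inequality.

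The main point to watch is the ordering of the decomposition. The noise must be split off \emph{before} Young's inequality, so that it contributes with coefficient $1$ rather than $O(t)$; otherwise the $\eta_t^2\Lambda^2$ term would blow up to order $t\eta_t^2$ and destroy the contraction. Similarly, the $(1-\alpha_t)\bv_{t-1}$ memory must be absorbed so that only the $\alpha_t$-scaled error $Z$ remains as the bias. The $t=1$ case requires $\bs_1$ from the initialization step, which has $\beta_0=1$; this is consistent with \eqref{eq: stprop}.
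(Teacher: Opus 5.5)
Your proposal is correct and follows essentially the same route as the paper. It uses the same prox comparison via $\bs_t$, splits $\bv_t-\bv_{t-1}$ into the Lipschitz part, the $\alpha_t$-scaled bias, and the conditionally mean-zero noise (removed before Young's inequality with parameter $1/(4(t+1))$), and finishes with \Cref{lem: normsum} and \Cref{lem: noiseupper}. One small citation slip: the nonnegativity $\eta_t/\eta_{t-1}-\beta_t=\sqrt{t/(t+1)}-1/(t+1)\geq0$ is not stated in \Cref{fact: stes}, though it follows at once from $\sqrt{t(t+1)}\geq1$.
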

\begin{proof}
Throughout the proof, we use the notations
\begin{align*}
\gtil(\bz_t) = \gtil(\bz_t, \xi_t) 
~~~\text{and}~~~
\gtil(\bz_{t-1}) = \gtil(\bz_{t-1}, \xi_t).
\end{align*}
By the previous update and the prox-inequality in \eqref{eq:prox-optimality}, we have
\begin{align*}
\bs_t = \frac{(1-\beta_{t-1})\bz_{t-1}+ \beta_{t-1}\bz_0- \bz_t}{\eta_{t-1}}
-\bv_{t-1} \in\partial r(\bz_t).
\end{align*}
Consequently, with stepsize $\eta_t$, we have $\bz_t=\prox_{\eta_tr}(\bz_t+\eta_t \bs_t)$.
Comparing this with the update of $\bz_{t+1}$ and using nonexansiveness of the proximal operator gives
\begin{align}\label{eq: stdisup}
\|\bz_{t+1}-\bz_t\| 
&\leq \| (1-\beta_t)\bz_t+\beta_t\bz_0-\eta_t \bv_t-(\bz_t+\eta_t \bs_t)\| \notag \\
&= \Big\|\left(\frac{\eta_t}{\eta_{t-1}}-\beta_t\right)(\bz_t-\bz_{t-1})\notag \\ 
&\quad+\left(\frac{\eta_t}{\eta_{t-1}}\beta_{t-1}-\beta_t\right)(\bz_{t-1}-\bz_0) -\eta_t(\bv_t-\bv_{t-1})\Big\|.
\end{align}
Here the last equality follows from the definition of $\bs_t$.

We first rewrite the last term inside the norm. Using the definition of estimator $\bv_t$ in Algorithm \ref{alg: alg2}, we have,
\begin{align*}
\bv_t-\bv_{t-1} 
&= \gtil(\bz_t)-\gtil(\bz_{t-1})-\alpha_t(\bv_{t-1}-\gtil(\bz_{t-1}))\\
&=\gtil(\bz_t)-\gtil(\bz_{t-1})-\alpha_t(\bv_{t-1}-G(\bz_{t-1})) +\alpha_t (\gtil(\bz_{t-1}, \xi_t) - G(\bz_{t-1})).
\end{align*}
Substituting this into \eqref{eq: stdisup} yields
\begin{align*}
\|\bz_{t+1}-\bz_t\| 
&\leq \Big\|\left(\frac{\eta_t}{\eta_{t-1}}-\beta_t\right)(\bz_t-\bz_{t-1}) -\eta_t\big(G(\bz_t)-G(\bz_{t-1})\big) \\ 
&\quad+\left(\frac{\eta_t}{\eta_{t-1}}\beta_{t-1}-\beta_t\right)(\bz_{t-1}-\bz_0) +\eta_t\alpha_t\big(\bv_{t-1}-G(\bz_{t-1})\big)\\
&\quad-\eta_t \left(\gtil(\bz_t)-G(\bz_t)+(1-\alpha_t)\big(G(\bz_{t-1})-\gtil(\bz_{t-1})\big)\right)\Big\|.
\end{align*}
Because $\Et\left[\gtil(\bz_t)-G(\bz_t)+(1-\alpha_t)\big(G(\bz_{t-1})-\gtil(\bz_{t-1})\big)\right]=0$ and because the terms in the first and second lines of the right-hand side in the display equation above are measurable under the conditioning of $\mathbb{E}_t$, we have
\begin{align*}
\Et\|\bz_{t+1}-\bz_t\|^2
&\leq  \Big\|\left(\frac{\eta_t}{\eta_{t-1}}-\beta_t\right)(\bz_t-\bz_{t-1}) -\eta_t\big(G(\bz_t)-G(\bz_{t-1})\big) \notag\\ 
&\quad+\left(\frac{\eta_t}{\eta_{t-1}}\beta_{t-1}-\beta_t\right)(\bz_{t-1}-\bz_0) +\eta_t\alpha_t\big(\bv_{t-1}-G(\bz_{t-1})\big)\Big\|^2\notag\\
&\quad+\eta_t^2 \Et\Big\|\gtil(\bz_t)-G(\bz_t)+(1-\alpha_t)\big(G(\bz_{t-1})-\gtil(\bz_{t-1})\big)\Big\|^2.
\end{align*}
We now apply Young's inequality to the first term on the right-hand side with the parameter $1/(4(t+1))$ to retain a sufficient contraction.
\begin{align}
\Et\|\bz_{t+1}-\bz_t\|^2
&\leq \left(1+\frac{1}{4(t+1)}\right)\Big\|\left(\frac{\eta_t}{\eta_{t-1}}-\beta_t\right)(\bz_t-\bz_{t-1}) -\eta_t\big(G(\bz_t)-G(\bz_{t-1})\big)\Big\|^2 \notag\\ 
&\quad+ \left(1+{4(t+1)}\right)\Big\|\left(\frac{\eta_t}{\eta_{t-1}}\beta_{t-1}-\beta_t\right)(\bz_{t-1}-\bz_0) +\eta_t\alpha_t\big(\bv_{t-1}-G(\bz_{t-1})\big)\Big\|^2\notag\\
&\quad+\eta_t^2 \Et\Big\|\gtil(\bz_t)-G(\bz_t)
+ (1-\alpha_t)\big(G(\bz_{t-1})-\gtil(\bz_{t-1})\big)\Big\|^2. \label{eq: stdisupex}
\end{align}
For the first term on the right-hand side, using \Cref{lem: normsum} with $\alpha = \eta_t/\eta_{t-1}-\beta_t\geq0$ and $\gamma=\eta_t$ gives
\begin{align}\label{eq: stdis1}
\left(1+\frac{1}{4(t+1)}\right)
&\Big\|\left(\frac{\eta_t}{\eta_{t-1}}-\beta_t\right)(\bz_t-\bz_{t-1}) -\eta_t\big(G(\bz_t)-G(\bz_{t-1})\big)\Big\|^2  \notag\\
&\leq \left(1+\frac{1}{4(t+1)}\right)
\left(\left(\frac{\eta_t}{\eta_{t-1}}-\beta_t\right)^2+\eta_t^2L^2\right)
\|\bz_t-\bz_{t-1}\|^2.
\end{align}

We next estimate second term of the right-hand side, using Young's inequality with the fact that $2(1+4(t+1))\leq 10(t+1)$ for $t\geq0$.
\begin{align}\label{eq: stdis2}
&(1+{4(t+1)})\Big\|\Big(\frac{\eta_t}{\eta_{t-1}}\beta_{t-1}-\beta_t\Big)(\bz_{t-1}-\bz_0) +\eta_t\alpha_t\big(\bv_{t-1}-G(\bz_{t-1})\big)\Big\|^2 \notag\\
&~~~\leq 10(t+1)\left(\frac{\eta_t}{\eta_{t-1}}\beta_{t-1}-\beta_t\right)^2
\|\bz_{t-1}-\bz_0\|^2 +10(t+1)\eta_t^2\alpha_t^2\|\bv_{t-1}-G(\bz_{t-1})\|^2.
\end{align}
Applying \cref{lem: noiseupper} to last term in \cref{eq: stdisupex} and substituting the \eqref{eq: stdis1} and \eqref{eq: stdis2} into \cref{eq: stdisupex}, followed by taking expectation, proves the claim.
\end{proof}

\begin{lemma}\label{lem: noiseupper}
Suppose Assumptions \ref{asp: 1} and \ref{asp: 3} hold. Then,
\begin{align*}
\Et\big\|\gtil(\bz_t)-G(\bz_t)+(1-\alpha_t)\big(G(\bz_{t-1})-\gtil(\bz_{t-1})\big)\big\|^2 &\leq 2\Lambda^2\|\bz_t-\bz_{t-1}\|^2 \\
&\quad+2\alpha_t^2\big(\sigma^2+B^2\|\bz_{t-1}-\bz_0\|^2\big).
\end{align*}
\end{lemma}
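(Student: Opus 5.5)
We need to bound $\Et\|X\|^2$ where $X = \gtil(\bz_t)-G(\bz_t)+(1-\alpha_t)(G(\bz_{t-1})-\gtil(\bz_{t-1}))$, with both oracle evaluations using the same sample $\xi_t$. The plan is to regroup $X$ into a difference term controlled by the mean-square Lipschitz property of \Cref{asp: 3} plus an $\alpha_t$-weighted single-point noise term controlled by the variance bound:
\begin{align*}
X = \Big[\big(\gtil(\bz_t)-\gtil(\bz_{t-1})\big) - \big(G(\bz_t)-G(\bz_{t-1})\big)\Big] + \alpha_t\big(\gtil(\bz_{t-1})-G(\bz_{t-1})\big).
\end{align*}
This identity is checked by expanding the $(1-\alpha_t)$ factor.

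Applying Young's inequality $\|a+b\|^2\leq 2\|a\|^2+2\|b\|^2$ gives two terms to control. For the first term, conditional on $\fil_t$ both $\bz_t$ and $\bz_{t-1}$ are fixed. The conditional mean of $\gtil(\bz_t)-\gtil(\bz_{t-1})$ is $G(\bz_t)-G(\bz_{t-1})$, so its conditional variance is at most its conditional second moment. That second moment is bounded by $\Lambda^2\|\bz_t-\bz_{t-1}\|^2$ by \Cref{asp: 3}. This yields the $2\Lambda^2\|\bz_t-\bz_{t-1}\|^2$ term.

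For the second term we need $\Et\|\gtil(\bz_{t-1},\xi_t)-G(\bz_{t-1})\|^2\leq \sigma^2+B^2\|\bz_{t-1}-\bz_0\|^2$. This is \Cref{asp: 2}, which \Cref{lem: idbd} assumes, applied at the point $\bz_{t-1}$ with the fresh sample $\xi_t$. The main subtlety lies here. \Cref{asp: 2} is phrased for the pair $(\bz_t,\xi_t)$, so we read it as a pointwise condition on the oracle: for any $\fil_t$-measurable point $\bx$, $\E_{\xi_t}\|\gtil(\bx,\xi_t)-G(\bx)\|^2\leq B^2\|\bx-\bz_0\|^2+\sigma^2$. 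This holds because $\xi_t$ is independent of $\fil_t$. Alternatively, it can be derived from \Cref{asp: 3} together with finiteness of $\E\|\gtil(\bz_0,\xi)\|^2$, with suitable constants $B,\sigma$. Multiplying this bound by $2\alpha_t^2$ and combining it with the first term gives the claim.
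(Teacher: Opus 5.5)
Your proof is correct and takes the same route as the paper. It uses the same regrouping into $\big(\gtil(\bz_t)-\gtil(\bz_{t-1})\big)-\big(G(\bz_t)-G(\bz_{t-1})\big)$ plus $\alpha_t\big(\gtil(\bz_{t-1})-G(\bz_{t-1})\big)$, then Young's inequality, then mean-square Lipschitzness for the first piece and the variance bound for the second. You are also a bit more careful than the paper: it attributes the last step only to \Cref{asp: 1,asp: 3} and leaves implicit both the variance-versus-second-moment step and the fact that $\sigma,B$ come from \Cref{asp: 2} (in force in \Cref{lem: idbd}) applied at $\bz_{t-1}$ with the fresh sample $\xi_t$.
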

\begin{proof}
Using Young's inequality gives
\begin{align*}
\Et\big\|\gtil(\bz_t)&-G(\bz_t)+(1-\alpha_t)\big(G(\bz_{t-1})-\gtil(\bz_{t-1})\big)\big\|^2 \\
&\quad\leq 2\Et\big\|\gtil(\bz_t)-G(\bz_t)-\gtil(\bz_{t-1})+G(\bz_{t-1})\big\|^2 +2\alpha_t^2\Et\|\gtil(\bz_{t-1})-G(\bz_{t-1})\|^2 \\
&\quad\leq 2\Lambda^2\|\bz_t-\bz_{t-1}\|^2 +2\alpha_t^2\big(\sigma^2+B^2\|\bz_{t-1}-\bz_0\|^2\big),
\end{align*}
where the last inequality follows from \Cref{asp: 1} and \Cref{asp: 3}. 
\end{proof}

\newpage
\section{Deferred Proofs for Corollaries}\label{sec: def_proofs}

\begin{proof}[Proof of \Cref{cor: resgap1}]
Setting $\bz=\bz_t$ in \eqref{eq: gapproj}, taking expectations, and applying Cauchy--Schwarz give
\begin{align}
\E\gap(\bz_t) 
\leq& \left[ 
\max_{\bu\in\mathcal B}\|\bz_0-\bu\|
+\rho\|G(\bz_0)\| 
\right]\mathbb{E}\|\mathcal{G}_\rho(\bz_t)\| + 
(1+\rho L) \sqrt{\E\|\bz_t-\bz_0\|^2} \sqrt{\E\|\mathcal G_\rho(\bz_t)\|^2} \notag\\
&\quad+\rho\E\|\mathcal G_\rho(\bz_t)\|^2.
\label{eq: gapupper}
\end{align}
By \eqref{eq:tracking-rate}, because $t\geq1$, we have $1/(\sqrt{t+a}) \leq 1/(\sqrt{a+1})$, 
\begin{align*}
\E\|\bz_t-\bz_0\|^2
\leq \frac{25}{2}\|\bz_0-\bz^\star\|^2+
\frac{2\sigma^2+25B^2\|\bz_0-\bz^\star\|^2}{H^2(a-\frac12)\sqrt a}
<\infty.
\end{align*}
Combining this bound with Theorem \ref{th:main} gives the assertion.
\end{proof}

\begin{corollary}\label{cor: minimaxgap}
Under the setup of \Cref{cor: resgap1}, suppose that $\bz=(\bx,\by)$, $\mathcal U=\mathcal U_{\bx}\times\mathcal U_{\by}$, and
\begin{align*}
G(\bz) =
\begin{pmatrix}
\nabla_{\bx}f(\bx,\by)\\
-\nabla_{\by}f(\bx,\by)
\end{pmatrix},
\end{align*}
where $f$ is convex in $\bx$ and concave in $\by$. Then, we have
\begin{align*}
\E\left[
\max_{\by'\in\mathcal U_{\by}}f(\bx_t,\by')
- \min_{\bx'\in\mathcal U_{\bx}}f(\bx',\by_t) \right]
= O\left(\frac{1}{t^{1/4}}\right).
\end{align*}
\end{corollary}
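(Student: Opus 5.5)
The plan is to reduce the restricted primal-dual gap to the restricted VI gap $\gap_{\mathcal U}$ of \Cref{cor: resgap1}, and then reuse that corollary's $O(t^{-1/4})$ rate.

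\textbf{Step 1: a pointwise inequality.} Fix $\bu=(\bx',\by')\in\mathcal U$ and write $\bz_t=(\bx_t,\by_t)$. Convexity of $f(\cdot,\by_t)$ and concavity of $f(\bx_t,\cdot)$ give
\begin{align*}
f(\bx_t,\by_t)-f(\bx',\by_t)&\leq \langle \nabla_{\bx}f(\bx_t,\by_t),\bx_t-\bx'\rangle,\\
f(\bx_t,\by')-f(\bx_t,\by_t)&\leq \langle \nabla_{\by}f(\bx_t,\by_t),\by'-\by_t\rangle.
\end{align*}
Adding the two and using the definition of $G$ yields
\begin{align*}
f(\bx_t,\by')-f(\bx',\by_t)\leq \langle G(\bz_t),\bz_t-\bu\rangle .
\end{align*}

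\textbf{Step 2: decouple the maxima.} Because $\mathcal U=\mathcal U_{\bx}\times\mathcal U_{\by}$ is a product set, maximizing over $\bx'$ and $\by'$ separately on the left is the same as maximizing over $\bu\in\mathcal U$. Hence
\begin{align*}
\max_{\by'\in\mathcal U_{\by}}f(\bx_t,\by')-\min_{\bx'\in\mathcal U_{\bx}}f(\bx',\by_t)\leq \gap_{\mathcal U}(\bz_t).
\end{align*}
Here $\mathcal U$ is compact and contained in $Z$, as in the setup of \Cref{cor: resgap1}.

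\textbf{Step 3: conclude.} Take expectations and invoke \Cref{cor: resgap1} for the bound $O(t^{-1/4})$.

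\textbf{Main obstacle.} The only delicate point is that $\bz_t\in Z$ but may lie outside $\mathcal U$. This does not matter for the argument: Step 1 uses only convexity and concavity at $\bz_t$ and holds for any comparison point $\bu\in\mathcal U$. Measurability of the maxima follows from the continuity of $f$ in its arguments.
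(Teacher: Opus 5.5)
Your proof is correct and follows essentially the same route as the paper. You sum the convexity and concavity inequalities at $\bz_t$ to get $f(\bx_t,\by')-f(\bx',\by_t)\leq\langle G(\bz_t),\bz_t-\bu\rangle$, maximize over the product set $\mathcal U$ to bound the primal-dual gap by $\gap_{\mathcal U}(\bz_t)$, and then take expectations and invoke \Cref{cor: resgap1}.
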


\begin{proof}[Proof of \Cref{cor: minimaxgap}]
For every $(\bx',\by')\in\mathcal U_{\bx}\times\mathcal U_{\by}$, convexity in $\bx$ and concavity in $\by$ give
\begin{align*}
f(\bx,\by')-f(\bx',\by)
&= f(\bx,\by')-f(\bx,\by) + f(\bx,\by)-f(\bx',\by)\\
&\leq \langle\nabla_{\by}f(\bx,\by),\by'-\by\rangle + \langle\nabla_{\bx}f(\bx,\by),\bx-\bx'\rangle\\
&= \left\langle G(\bz), \bz-
\begin{pmatrix}
\bx'\\
\by'
\end{pmatrix}
\right\rangle.
\end{align*}
Taking the maximum over $(\bx',\by')\in\mathcal U_{\bx}\times\mathcal U_{\by}$ gives
\begin{align*}
\max_{\by'\in\mathcal U_{\by}}f(\bx,\by') - \min_{\bx'\in\mathcal U_{\bx}}f(\bx',\by)
\leq
\gap(\bz).
\end{align*}
Taking expectations at $\bz=\bz_t$ and applying
\Cref{cor: resgap1} proves the result.
\end{proof}

\section{Technical Results}

The following lemma is generalizing a useful idea from \cite{cai2026lastiterate} and is used many times in the proofs.
\begin{lemma}\label{lem: normsum}
    Let $G\colon\mathbb{R}^d\to\mathbb{R}^d$ be monotone and $L$-Lipschitz, and $\alpha, \gamma\geq0$. Then, for any $\bu, \bs\in\mathbb{R}^d$ we have
    \begin{align*}
    \| \alpha(\bu-\bs) - \gamma(G(\bu)-G(\bs)) \| \leq \sqrt{\alpha^2+\gamma^2L^2}\|\bu-\bs\|.  
    \end{align*}
\end{lemma}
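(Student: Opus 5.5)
The plan is to expand the square and reduce everything to the cross term, which monotonicity and Lipschitz continuity control. Write $\bw=\bu-\bs$ and $\bg=G(\bu)-G(\bs)$. Then
\begin{align*}
\|\alpha\bw-\gamma\bg\|^2=\alpha^2\|\bw\|^2-2\alpha\gamma\langle \bw,\bg\rangle+\gamma^2\|\bg\|^2 .
\end{align*}
The three terms are handled in turn.

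For the cross term, monotonicity of $G$ gives $\langle \bw,\bg\rangle=\langle \bu-\bs,G(\bu)-G(\bs)\rangle\ge 0$. Since $\alpha,\gamma\ge0$, we get $-2\alpha\gamma\langle\bw,\bg\rangle\le 0$, so this term can be dropped. This is the only place where the sign conditions on $\alpha$ and $\gamma$ enter, and it is why the statement requires them to be nonnegative.

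For the last term, $L$-Lipschitz continuity gives $\|\bg\|\le L\|\bw\|$, hence $\gamma^2\|\bg\|^2\le\gamma^2L^2\|\bw\|^2$. Combining the two estimates yields
\begin{align*}
\|\alpha\bw-\gamma\bg\|^2\le(\alpha^2+\gamma^2L^2)\|\bw\|^2 .
\end{align*}
Taking square roots, both sides being nonnegative, gives the claim.

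There is no genuine obstacle; the argument is a three-line computation. The only point to keep in mind is that the nonnegativity of $\alpha\gamma$ is exactly what makes the monotonicity term favorable. This is consistent with how the lemma is invoked earlier in the paper, where \Cref{fact: sces} is used to check $\eta_{t+1}/\eta_t-\beta_{t+1}\ge0$ before applying the lemma with $\alpha=\eta_{t+1}/\eta_t-\beta_{t+1}$.
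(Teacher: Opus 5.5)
Your proof is correct and is the same as the paper's. Both expand the square and drop the cross term $-2\alpha\gamma\langle \bu-\bs, G(\bu)-G(\bs)\rangle\le 0$ using monotonicity and $\alpha\gamma\ge0$. Both then bound $\gamma^2\|G(\bu)-G(\bs)\|^2\le\gamma^2L^2\|\bu-\bs\|^2$ by Lipschitz continuity and take square roots.
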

\begin{proof}
    By expanding the square of the left-hand side, we obtain
    \begin{equation}\label{eq: est_q1}
    \begin{aligned}
        \| \alpha(\bu-\bs) - \gamma(G(\bu)-G(\bs)) \|^2 &= \alpha^2 \| \bu-\bs\|^2 + \gamma^2 \| G(\bu)- G(\bs)\|^2 \\
        &\quad-2\alpha\gamma\langle \bu-\bs, G(\bu)-G(\bs) \rangle.
    \end{aligned}
    \end{equation}
    Due to Lipschitzness of $G$, we have
    \begin{align*}
        \gamma^2 \| G(\bu)-G(\bs)\|^2 \leq \gamma^2L^2\|\bu-\bs\|^2.
    \end{align*}
    By monotonicity of $G$, we have
    \begin{align*}
        -2\alpha\gamma\langle\bu-\bs, G(\bu)-G(\bs)\rangle\leq 0.
    \end{align*}
    Combining the last two estimates in \eqref{eq: est_q1} and taking the square root of both sides gives the assertion.
\end{proof}

\subsection{Results on Optimality Measures}
The following standard result proves that the rate of convergence that we prove for the gradient mapping norm directly translates to the same rate on the restricted gap, up to an additional multiplicative constant. 
\begin{corollary}\label{cor: resgap}
Under \Cref{asp: 1}, let $\mathcal{U}\subseteq Z$ be a compact set.
Let $r=\delta_Z$, where $Z$ is nonempty, closed, and convex, but not necessarily bounded. Then, for every $\bz \in Z$ and any $\rho>0$, we have
\begin{align}
\gap_\mathcal{U}(\bz) \leq
\left( 
\max_{\bu\in\mathcal B}\|\bz_0-\bu\|
+\rho\|G(\bz_0)\|
+(1+\rho L)\|\bz-\bz_0\|
\right)
\|\mathcal G_\rho(\bz)\|
+\rho\|\mathcal G_\rho(\bz)\|^2. \label{eq: gapproj}
\end{align}

\end{corollary}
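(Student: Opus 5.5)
Set $\bar\bz:=\prox_{\rho r}(\bz-\rho G(\bz))=\Pi_Z(\bz-\rho G(\bz))$, so that $\bz-\bar\bz=\rho\mathcal G_\rho(\bz)$. The idea is to bound $\langle G(\bz),\bz-\bu\rangle$ for a fixed $\bu\in\mathcal U$ by splitting it at $\bar\bz$:
\begin{align*}
\langle G(\bz),\bz-\bu\rangle=\langle G(\bz),\bz-\bar\bz\rangle+\langle G(\bz),\bar\bz-\bu\rangle.
\end{align*}
Taking the maximum over $\bu\in\mathcal U$ at the end then gives \eqref{eq: gapproj}. (I read $\mathcal B$ in the statement as $\mathcal U$.)

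\textbf{Second term.} Apply the prox-inequality \eqref{eq:prox-optimality} with $r=\delta_Z$. Since $\bu\in\mathcal U\subseteq Z$, this gives $\langle \bar\bz-\bz+\rho G(\bz),\bu-\bar\bz\rangle\geq 0$, that is,
\begin{align*}
\rho\langle G(\bz),\bar\bz-\bu\rangle\leq\langle \bar\bz-\bz,\bu-\bar\bz\rangle=\rho\langle\mathcal G_\rho(\bz),\bar\bz-\bu\rangle .
\end{align*}
Hence $\langle G(\bz),\bar\bz-\bu\rangle\leq\|\mathcal G_\rho(\bz)\|\,\|\bar\bz-\bu\|$. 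By the triangle inequality,
\begin{align*}
\|\bar\bz-\bu\|\leq\|\bar\bz-\bz\|+\|\bz-\bz_0\|+\|\bz_0-\bu\|=\rho\|\mathcal G_\rho(\bz)\|+\|\bz-\bz_0\|+\|\bz_0-\bu\|.
\end{align*}
This contributes $\rho\|\mathcal G_\rho(\bz)\|^2$, the term $\|\bz-\bz_0\|\,\|\mathcal G_\rho(\bz)\|$, and the term $\max_{\bu\in\mathcal U}\|\bz_0-\bu\|\,\|\mathcal G_\rho(\bz)\|$.

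\textbf{First term.} We have $\langle G(\bz),\bz-\bar\bz\rangle\leq\rho\|G(\bz)\|\,\|\mathcal G_\rho(\bz)\|$. To avoid a dependence on $\|G(\bz)\|$, use Lipschitzness: $\|G(\bz)\|\leq\|G(\bz_0)\|+L\|\bz-\bz_0\|$. This gives $\rho\|G(\bz_0)\|\,\|\mathcal G_\rho(\bz)\|+\rho L\|\bz-\bz_0\|\,\|\mathcal G_\rho(\bz)\|$.

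\textbf{Combining.} Summing the two parts, the coefficient of $\|\bz-\bz_0\|\,\|\mathcal G_\rho(\bz)\|$ is $(1+\rho L)$, and the remaining terms are exactly those in \eqref{eq: gapproj}. Maximizing over $\bu\in\mathcal U$ completes the proof.

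The argument is elementary; the only delicate point is writing the bound in terms of $\bz_0$ rather than a solution or $\|G(\bz)\|$, since the iterates may be unbounded. This is why both triangle inequalities pass through $\bz_0$.
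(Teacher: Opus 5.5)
Your proof is correct and follows essentially the same route as the paper's. Both split $\bz-\bu$ at $\proj_Z(\bz-\rho G(\bz))$, use the projection optimality condition to bound $\langle G(\bz),\cdot\rangle$ by $\langle \mathcal G_\rho(\bz),\cdot\rangle$ on the second piece, and then apply Cauchy--Schwarz, the triangle inequality through $\bz_0$, and $\|G(\bz)\|\leq\|G(\bz_0)\|+L\|\bz-\bz_0\|$; your reading of $\mathcal B$ as $\mathcal U$ is the intended one.
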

\begin{proof}
Fix $\bz \in C$ and let $\bs = \proj_Z(\bz-\rho G(\bz))$. By \eqref{eq:prox-optimality}, we have $\frac{\bz-\bs-\rho G(\bz)}{\rho} \in \partial \delta_Z(\bs)$, and since $\mathcal{G}_\rho(\bz) = \frac{\bz-\bs}{\rho}$, we have $\mathcal G_\rho(\bz)-G(\bz)\in\partial \delta_Z(\bs)$. As a result, convexity of $\delta_Z$ gives, for every $\bu\in \mathcal B$, that
\begin{align}\label{eq: yhu4}
0\leq \langle\mathcal G_\rho(\bz)-G(\bz),\bs-\bu\rangle \iff \langle G(\bz), \bs-\bu \rangle \leq \langle \mathcal{G}_\rho(\bz), \bs-\bu \rangle.
\end{align}
Using $\bz-\bs=\rho \mathcal G_\rho(\bz)$, we obtain
\begin{align}
\langle G(\bz),\bz-\bu \rangle 
&= \rho\langle G(\bz),  \mathcal{G}_{\rho}(\bz) \rangle+\langle G(\bz),\bs-\bu \rangle \notag \\
&\leq \rho \langle G(\bz), \mathcal G_\rho (\bz) \rangle
+\langle \mathcal G_\rho(\bz), \bs-\bu\rangle \notag\\
&\leq \left(\|\bs-\bu\|+\rho\|G(\bz)\|\right) \|\mathcal G_\rho(\bz)\|, \label{eq: rgap}
\end{align}
where the first inequality is by \eqref{eq: yhu4} and the second by Cauchy-Schwarz.

Since $\bs = \proj_C (\bz-\rho G(\bz)) \in C$, for every $\bu\in\mathcal U$, we have
\begin{align*}
\|\bs-\bu\| &\leq \|\bs-\bz\|+\|\bz-\bz_0\|+\|\bz_0-\bu\|\\
&= \rho\|\mathcal G_\rho(\bz)\| +\|\bz-\bz_0\| +\|\bz_0-\bu\|,
\end{align*}
where the first identity is because of $\bz-\bs=\rho \mathcal G_\rho(\bz)$.
Lipschitzness of $G$ gives $\|G(\bz)\| \leq \|G(\bz_0)\|+L\|\bz-\bz_0\|$. Substituting these inequalities into \eqref{eq: rgap} proves \eqref{eq: gapproj}. 
\end{proof}

The following result shows that the natural residual (gradient mapping norm) is upper bounded by a residual-type quantity. Moreover, we prove that the natural residual is Lipschitz, which is another standard property.
\begin{lemma}\label{lem:mapping-comparison}
Under \Cref{asp: 1}, for $\bz\in\dom\partial r$ and any
$\bs\in\partial r(\bz)$, any $\rho>0$ and for all $\bx,\by\in \dom r$, we have
\begin{align}
    \|\mathcal G_\rho(\bz)\|&\leq\|G(\bz)+\bs\|,\label{eq:mapping-tangent-comparison}
    \\
    \|\mathcal G_\rho(\bx)-\mathcal G_\rho(\by)\|
    &\leq \sqrt{\rho^{-2}+L^2}\|\bx-\by\|.
    \label{eq:mapping-lipschitz}
\end{align}
\end{lemma}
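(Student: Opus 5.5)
We prove the two inequalities separately; both rest on the prox characterization \eqref{eq:prox-optimality} and on nonexpansiveness of $\prox_{\rho r}$.

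\emph{Proof of \eqref{eq:mapping-tangent-comparison}.} Fix $\bz\in\dom\partial r$ and $\bs\in\partial r(\bz)$. Since $(\bz+\rho\bs)-\bz=\rho\bs\in\rho\,\partial r(\bz)$, \eqref{eq:prox-optimality} gives
\begin{align*}
\bz=\prox_{\rho r}(\bz+\rho\bs).
\end{align*}
Hence
\begin{align*}
\rho\,\mathcal G_\rho(\bz)=\prox_{\rho r}(\bz+\rho\bs)-\prox_{\rho r}(\bz-\rho G(\bz)).
\end{align*}
Nonexpansiveness of the proximal operator then yields
\begin{align*}
\rho\|\mathcal G_\rho(\bz)\|\leq\|(\bz+\rho\bs)-(\bz-\rho G(\bz))\|=\rho\|G(\bz)+\bs\|.
\end{align*}
Dividing by $\rho$ gives \eqref{eq:mapping-tangent-comparison}.

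\emph{Proof of \eqref{eq:mapping-lipschitz}.} Set $\bp=\prox_{\rho r}(\bx-\rho G(\bx))$ and $\bq=\prox_{\rho r}(\by-\rho G(\by))$, and write $\Delta=\bx-\by$, $\delta=\bp-\bq$. Then $\rho(\mathcal G_\rho(\bx)-\mathcal G_\rho(\by))=\Delta-\delta$. Since the prox is firmly nonexpansive,
\begin{align*}
\|\delta\|^2\leq\langle \delta,\ \Delta-\rho(G(\bx)-G(\by))\rangle.
\end{align*}
Expanding the square gives
\begin{align*}
\|\Delta-\delta\|^2&=\|\Delta\|^2-2\langle\Delta,\delta\rangle+\|\delta\|^2\\
&\leq\|\Delta\|^2-\langle\Delta,\delta\rangle-\rho\langle\delta,G(\bx)-G(\by)\rangle,
\end{align*}
where we replaced one copy of $\|\delta\|^2$ by its firm-nonexpansiveness bound. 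Using $\|\delta\|^2\leq\langle\delta,\Delta-\rho(G(\bx)-G(\by))\rangle$ once more, we can write
\begin{align*}
-\langle\Delta,\delta\rangle-\rho\langle\delta,G(\bx)-G(\by)\rangle\leq-\|\delta\|^2-2\rho\langle\delta,G(\bx)-G(\by)\rangle.
\end{align*}
The right-hand side is at most $\rho^2\|G(\bx)-G(\by)\|^2$, because
\begin{align*}
-\|\delta\|^2-2\rho\langle\delta,\bw\rangle\leq\rho^2\|\bw\|^2 \quad\text{for any } \bw,
\end{align*}
which is just $\|\delta+\rho\bw\|^2\geq 0$. Combining the displays gives
\begin{align*}
\|\Delta-\delta\|^2\leq\|\Delta\|^2+\rho^2\|G(\bx)-G(\by)\|^2\leq(1+\rho^2L^2)\|\Delta\|^2,
\end{align*}
using $L$-Lipschitzness of $G$. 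Dividing by $\rho^2$ and taking square roots gives \eqref{eq:mapping-lipschitz}.

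The only delicate point is obtaining the sharp constant $\sqrt{\rho^{-2}+L^2}$. The naive triangle-inequality route gives $\rho^{-1}(2+\rho L)$, which only matches it up to a constant factor. If the chain above fails at some step, the fallback is to write $\mathcal G_\rho(\bx)-\mathcal G_\rho(\by)=\rho^{-1}\big[(\mathrm{I}-\prox_{\rho r})(\bu)-(\mathrm{I}-\prox_{\rho r})(\bv)\big]+G(\bx)-G(\by)$, with $\bu=\bx-\rho G(\bx)$ and $\bv=\by-\rho G(\by)$. Then one uses that $\mathrm{I}-\prox_{\rho r}$ is also firmly nonexpansive. Even the constant obtained this way suffices for all uses of the lemma in the paper, since they only need Lipschitzness up to constants.
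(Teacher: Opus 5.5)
Your proof is correct and essentially matches the paper's. The first inequality is proved identically. For \eqref{eq:mapping-lipschitz}, your firm-nonexpansiveness bound $\|\delta\|^2\leq\langle\delta,\Delta-\rho(G(\bx)-G(\by))\rangle$ is the paper's monotonicity-of-$\partial r$ inequality rewritten in the variable $\delta=\Delta-\rho(\mathcal G_\rho(\bx)-\mathcal G_\rho(\by))$. The square $\|\delta+\rho(G(\bx)-G(\by))\|^2$ that you discard is exactly the slack in the paper's Young's inequality, so your main chain already gives the sharp constant $\sqrt{\rho^{-2}+L^2}$ and the fallback is unnecessary.
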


\begin{proof}
If $\bs\in\partial r(\bz)$, then
$\bz=\prox_{\rho r}(\bz+\rho\bs)$ since the latter is equivalent to $\bz+\rho\partial r(\bz) \ni \bz+\rho \bs$. Using this identity with nonexpansiveness gives
\[
    \rho\|\mathcal G_\rho(\bz)\| = \|\bz-\prox_{\rho r}(\bz-\rho G(\bz))\|
    =\|\prox_{\rho r}(\bz+\rho\bs)
                 -\prox_{\rho r}(\bz-\rho G(\bz))\|
    \leq\rho\|G(\bz)+\bs\|.
\]
We divide by $\rho$ 
to obtain \eqref{eq:mapping-tangent-comparison}.

For the second assertion, the definition of the proximal operator gives 
$\mathcal G_\rho(\bx)-G(\bx)
\in\partial r(\bx-\rho\mathcal G_\rho(\bx))$ since $\rho\mathcal{G}_\rho(\bx) = \bx-\prox_{\rho r}(\bx-\rho G(\bx))$,
and the analogous inclusion for $\by$.
Monotonicity of $\partial r$ implies
\begin{align*}
    \langle \mathcal{G}_\rho(\bx) - G(\bx) - [\mathcal{G}_\rho(\by) - G(\by)], \bx-\rho \mathcal{G}_\rho(\bx) - [\by-\rho\mathcal{G}_\rho(\by)] \rangle \geq 0.
\end{align*}
Expanding the inner product gives
\begin{align}\label{eq: huy4}
\rho\|\mathcal G_\rho(\bx)-\mathcal G_\rho(\by)\|^2
&\leq\inprod{\mathcal G_\rho(\bx)-\mathcal G_\rho(\by)}
{\bx-\by+\rho(G(\bx)-G(\by))}-\inprod{G(\bx)-G(\by)}{\bx-\by}.
\end{align}
Young's inequality gives
\begin{align*}
    \langle \mathcal{G}_\rho(\bx) - \mathcal{G}_\rho(\by), \bx-\by &+\rho(G(\bx) - G(\by)) \rangle \leq \frac{\rho}{2}\|\mathcal{G}_\rho(\bx) - \mathcal{G}_\rho(\by)\|^2 \\
    &\quad+ \frac{1}{2\rho}\left( \|\bx-\by\|^2 + 2\rho\langle \bx-\by, G(\bx)-G(\by) \rangle + \rho^2\| G(\bx)-G(\by)\|^2 \right).
\end{align*}
Plugging to \eqref{eq: huy4} yields
\begin{equation*}
\rho\|\mathcal G_\rho(\bx)-\mathcal G_\rho(\by)\|^2\leq\frac\rho2\|\mathcal G_\rho(\bx)-\mathcal G_\rho(\by)\|^2
+\frac{1}{2\rho}\|\bx-\by\|^2 +\frac\rho2\|G(\bx)-G(\by)\|^2.
\end{equation*}
After rearranging and using Lipschitz continuity of $G$, we deduce
\[
    \|\mathcal G_\rho(\bx)-\mathcal G_\rho(\by)\|^2
    \leq\rho^{-2}\|\bx-\by\|^2+\|G(\bx)-G(\by)\|^2
    \leq(\rho^{-2}+L^2)\|\bx-\by\|^2.
\]
Taking square root of both sides proves \eqref{eq:mapping-lipschitz}.
\end{proof}

\subsection{Numerical Consequences of Parameters}
The following fact collects some tedious, yet important estimations we used, which follow from the choice of our parameters.
\begin{fact}\label{fact: sces}
Suppose that $a\geq2$, $H^2= L^2+2B^2$, and
\begin{align*}
\beta_t=\frac{a}{t+a},
\qquad
\eta_t=\frac{1}{H(t+a)^{3/4}}.
\end{align*}
Then, for every $t\geq1$,
\begin{align}
\sqrt{(1-\beta_t)^2+\eta_t^2L^2}
\leq1-\frac23\beta_t.
\label{eq:boundedness-contraction}
\end{align}
Moreover, for every $t\geq0$, we have,
\begin{align}
\left(\frac{\eta_{t+1}}{\eta_t}-\beta_{t+1}\right)^2
+\eta_{t+1}^2L^2
\leq\left(1-\frac{3}{2(t+a+1)}\right)^2,
\label{eq: shj5}
\end{align}
and
\begin{align}
\left|\beta_{t+1}
-\frac{\eta_{t+1}}{\eta_t}\beta_t\right|
\leq\frac{a}{4(t+a)(t+a+1)}.
\label{eq: shj6}
\end{align}
Finally, for every $t\geq1$,
\begin{align}
(1-\beta_t)^2+\eta_t^2(L^2+2B^2)
\leq1-\frac{a}{t+a}.
\label{eq: tracking-coefficient}
\end{align}
\end{fact}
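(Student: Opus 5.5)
Throughout I will write $s=t+a$, so that $\beta_t=a/s$, $\eta_t=1/(Hs^{3/4})$ and $\eta_{t+1}/\eta_t=(s/(s+1))^{3/4}$. I will use $H\geq L$ only through $\eta_t^2L^2\leq s^{-3/2}$. In \eqref{eq: tracking-coefficient}, $H^2=L^2+2B^2$ gives the identity $\eta_t^2(L^2+2B^2)=s^{-3/2}$. Each of the four claims then becomes an elementary one-variable inequality in $s$ (or $u=s+1$). The range is $s\geq a+1\geq3$ for the statements with $t\geq1$, and $s\geq a\geq2$ for those with $t\geq0$. I will verify each one separately.

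For \eqref{eq: shj6} I will compute exactly. From $(s/(s+1))^{3/4}\cdot\frac{a}{s}=a(s+1)^{-3/4}s^{-1/4}$ we get
\begin{align*}
\beta_{t+1}-\frac{\eta_{t+1}}{\eta_t}\beta_t=\frac{a}{s+1}\Big(1-(1+1/s)^{1/4}\Big).
\end{align*}
By concavity (Bernoulli), $0\leq(1+1/s)^{1/4}-1\leq\frac{1}{4s}$, which yields exactly $\frac{a}{4s(s+1)}$.

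For \eqref{eq: tracking-coefficient}, expanding and multiplying by $s$ reduces the claim to $a^2/s+s^{-1/2}\leq a$. Since the left side is decreasing in $s$, it suffices to check $s=a+1$. There the claim reads $(a+1)^{-1/2}\leq a/(a+1)$, i.e. $a\geq\sqrt{a+1}$, which holds for $a\geq2$.

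For \eqref{eq:boundedness-contraction}, I first note that $1-\frac{2a}{3s}>0$ for $s\geq a+1$, so it suffices to compare squares. The difference of squares factors as
\begin{align*}
\Big(1-\frac{2a}{3s}\Big)^2-\Big(1-\frac{a}{s}\Big)^2=\frac{a}{3s}\Big(2-\frac{5a}{3s}\Big).
\end{align*}
So I need $\frac a3\big(2-\frac{5a}{3s}\big)\geq s^{-1/2}$. Both sides move favorably as $s$ grows: the left side increases and the right side decreases. The left side is also increasing in $a$ at $s=a+1$. Hence the binding case is $a=2$, $s=3$, where $\frac23\cdot\frac89\approx0.593\geq3^{-1/2}\approx0.577$. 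The margin here is small, and this is the step I expect to be the main obstacle. It has to be checked carefully: monotonicity in both $a$ and $s$ must be justified, not just the corner value. My first attempt is to write $s=a+1+k$ with $k\geq0$ and check directly.

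For \eqref{eq: shj5}, with $u=s+1\geq a+1\geq3$, Bernoulli gives $(1-1/u)^{3/4}\leq1-\frac{3}{4u}$. Hence
\begin{align*}
\frac{\eta_{t+1}}{\eta_t}-\beta_{t+1}\leq1-\frac{a+3/4}{u}\leq1-\frac{11}{4u}.
\end{align*}
This quantity is also nonnegative, since $(s/(s+1))^{3/4}\geq s/(s+1)\geq a/(s+1)$; so squaring the upper bound is legitimate. Finally, the difference of squares is
\begin{align*}
\Big(1-\frac{3}{2u}\Big)^2-\Big(1-\frac{11}{4u}\Big)^2=\frac{5}{4u}\Big(2-\frac{17}{4u}\Big).
\end{align*}
So I need $\frac54\big(2-\frac{17}{4u}\big)\geq u^{-1/2}$. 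The left side increases and the right side decreases in $u$, so checking $u=3$ suffices: $\approx0.73\geq0.577$.
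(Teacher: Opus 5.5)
Your proposal is correct and follows the paper's proof essentially step for step. It uses the same expansions and differences of squares, the same worst case $t+a=a+1$ (respectively $t+a+1\geq 3$), and the same concavity bounds $(1-1/u)^{3/4}\leq 1-\frac{3}{4u}$ and $(1+1/s)^{1/4}\leq 1+\frac{1}{4s}$.

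The step you flag as the main obstacle is exactly the paper's check $a(a+6)\geq 9\sqrt{a+1}$ for $a\geq 2$. It follows at once because $\frac{a(a+6)}{9(a+1)}$ is increasing in $a$, $(a+1)^{-1/2}$ is decreasing in $a$, and $16\geq 9\sqrt{3}$ holds at $a=2$.
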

\begin{proof}
For the first estimate, we have
\begin{align*}
\left(1-\frac23\beta_t\right)^2
-(1-\beta_t)^2-\eta_t^2L^2
&=\frac23\beta_t-\frac59\beta_t^2-\eta_t^2L^2\\
&=\frac{1}{t+a}
\left(
\frac{2a}{3}
-\frac{5a^2}{9(t+a)}
-\frac{L^2}{H^2\sqrt{t+a}}
\right)\\
&\geq\frac{1}{t+a}
\left(
\frac{2a}{3}
-\frac{5a^2}{9(t+a)}
-\frac{1}{\sqrt{t+a}}
\right)\\
&\geq\frac{1}{t+a}
\left(
\frac{a(a+6)}{9(a+1)}
-\frac{1}{\sqrt{a+1}}
\right)
\geq0.
\end{align*}
Here, the first inequality uses $H\geq L$, and the second uses
$t\geq1$. The last inequality follows from
$a(a+6)\geq9\sqrt{a+1}$ for $a\geq2$. This proves
\eqref{eq:boundedness-contraction}.

We next prove the estimates for $t\geq0$. Since
\begin{align*}
\frac{\eta_{t+1}}{\eta_t}
=\frac{(t+a)^{3/4}}{(t+a+1)^{3/4}}
\geq\frac{t+a}{t+a+1}
\geq\beta_{t+1}
=\frac{a}{t+a+1},
\end{align*}
we have $\eta_{t+1}/\eta_t-\beta_{t+1}\geq0$.
Write $s=t+a+1\geq3$. By concavity of $x^{3/4}$, we have $\left(1-\frac1s\right)^{3/4} \leq 1-\frac{3}{4s}$.
Therefore, by definition of $\eta_t$ and $\beta_{t+1}$, we have
\begin{align*}
0 \leq\frac{\eta_{t+1}}{\eta_t}-\beta_{t+1} =
\left(1-\frac1s\right)^{3/4}-\frac{a}{s} \leq 1-\frac{3}{4s}-\frac{a}{s} \leq 1-\frac{11}{4s},
\end{align*}
where the last inequality uses $a\geq2$. Moreover, $\eta_{t+1}^2L^2
= \frac{L^2}{H^2}s^{-3/2} 
\leq s^{-3/2}$ because $H\geq L$. Therefore,
\begin{align*}
\left(\frac{\eta_{t+1}}{\eta_t} -\beta_{t+1}\right)^2
+\eta_{t+1}^2L^2
\leq \left(1-\frac{11}{4s}\right)^2+s^{-3/2}
\leq \left(1-\frac{3}{2s}\right)^2.
\end{align*}
The last inequality follows from $\frac52-\frac{85}{16s}-s^{-1/2} \geq \frac{35}{48}-\frac{1}{\sqrt3} >0$ because $s\geq3$. This proves \eqref{eq: shj5}.

Similarly, using the definitions of $\beta_t$ and $\eta_t$, we have
\begin{align*}
\left| \beta_{t+1} -\frac{\eta_{t+1}}{\eta_t}\beta_t \right|
&= \left|\frac{a}{t+a+1} - \left(\frac{t+a}{t+a+1} \right)^{3/4} \frac{a}{t+a} \right|\\
&= \frac{a}{t+a+1} \left| 1-\left(\frac{t+a+1}{t+a}\right)^{1/4} \right|\\
&= \frac{a}{t+a+1} \left[ \left(1+\frac{1}{t+a}\right)^{1/4}-1 \right].
\end{align*}
The last equality follows because
$\left(1+\frac{1}{t+a}\right)^{1/4}\geq1$.
Then, concavity of $x^{1/4}$ yields $\left(1+\frac{1}{t+a}\right)^{1/4} \leq 1+\frac{1}{4(t+a)}$. Therefore, we have
\begin{align*}
\left|\beta_{t+1}-\frac{\eta_{t+1}}{\eta_t}\beta_t\right|
\leq \frac{a}{4(t+a)(t+a+1)},
\end{align*}
which proves \eqref{eq: shj6}.

Finally, for $t\geq1$,
\begin{align}\label{eq: numer_est1}
(t+a)\left[1-(1-\beta_t)^2-\eta_t^2(L^2+2B^2)\right]
&= 2a-\frac{a^2}{t+a} - \frac{L^2+2B^2}{H^2\sqrt{t+a}}\notag \\
&\geq 2a-\frac{a^2}{t+a}-\frac{1}{\sqrt{t+a}}\geq a.
\end{align}
Here, the first inequality uses $H^2= L^2+2B^2$, while the last
inequality follows from
\begin{align*}
a-\frac{a^2}{t+a}-\frac{1}{\sqrt{t+a}}
\geq \frac{a}{a+1}-\frac{1}{\sqrt{a+1}}
\geq 0
\end{align*}
for $t\geq1$ and $a\geq2$. Dividing \eqref{eq: numer_est1} by $t+a$
proves \eqref{eq: tracking-coefficient}.
\end{proof}

\begin{fact}\label{fact: stes}
Suppose $H=\max\left\{2L,\;4\sqrt{1024\Lambda^2+4+16B^2}\right\}$ and let $\Gamma = 1024\Lambda^2+4+16B^2$. with
\begin{align*}
\alpha_t=\beta_t=\frac{1}{t+1}, ~~~\text{and}~~~\eta_t=\frac{1}{H\sqrt{t+1}}.
\end{align*}
Then for every $t\geq1$, following five estimates hold:
\begin{enumerate}
\item 
\begin{align}
&\left(1- \frac{1}{t+1}\right)^2\frac{\Gamma C}{t} +\frac{512\Lambda^2C}{(t+1)^2} + \frac{2(C+4B^2C)}{(t+1)^2} = \frac{\Gamma C}{t+1} -\frac{\Gamma C}{2(t+1)^2}.
\label{eq: fc0}
\end{align}

\item 
\begin{align}\label{eq: fc1}
\left(1-\beta_t+\frac{\eta_t^2L^2}{1-\beta_t} \right)
\leq \left(1-\frac{\beta_t}{2}\right).
\end{align}

\item 
\begin{align}\label{eq: fc2}
&40(t+1) \left(\frac{\eta_t}{\eta_{t-1}}\beta_{t-1}-\beta_t\right)^2C + \frac{10(t+1) \eta_t^2\alpha_t^2\Gamma C}{t}\notag\\
&\quad+ \eta_t^2 \left(\frac{512\Lambda^2C}{(t+1)^2} + 2\alpha_t^2\big(\sigma^2+4B^2C\big) \right) \leq \left(40 +\frac{41\Gamma}{2H^2} \right)\frac{C}{(t+1)^3}.
\end{align}

\item 
\begin{align}\label{eq: fc3}
40 +\frac{41\Gamma}{2H^2} \leq
\frac{1321}{32} \leq 64.
\end{align}
\item 
\begin{align}\label{eq: fc4}
&\frac{256C}{(t+1)^2}
\left(1+\frac{1}{4(t+1)}\right)
\left(\left(1- \frac{3}{2(t+1)} \right)^2+ \frac{1}{4(t+1)} \right)
+\frac{64C}{(t+1)^3} \notag\\
& \leq\frac{256C}{(t+2)^2}.
\end{align}
\end{enumerate}

\end{fact}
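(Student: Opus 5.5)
All five items are direct computations from $\alpha_t=\beta_t=\frac{1}{t+1}$, $\eta_t=\frac{1}{H\sqrt{t+1}}$ and the two consequences of the choice of $H$, namely $L^2/H^2\leq 1/4$ and $\Gamma/H^2\leq 1/16$. I would treat them one at a time. The only structural observation needed is
\begin{align*}
512\Lambda^2+2+8B^2=\frac{\Gamma}{2}.
\end{align*}

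\textbf{Item 1.} This is an exact identity. Since $(1-\frac{1}{t+1})^2\frac{1}{t}=\frac{t}{(t+1)^2}$, and the last two terms combine to $\frac{(512\Lambda^2+2+8B^2)C}{(t+1)^2}=\frac{\Gamma C}{2(t+1)^2}$, the left-hand side equals $\frac{\Gamma C\,(t+\frac12)}{(t+1)^2}$. This is exactly $\frac{\Gamma C}{t+1}-\frac{\Gamma C}{2(t+1)^2}$.

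\textbf{Item 2.} The claim is equivalent to $\frac{\eta_t^2L^2}{1-\beta_t}\leq\frac{\beta_t}{2}$. I would use $\eta_t^2L^2\leq\frac{1}{4(t+1)}$, which follows from $L\leq H/2$, together with $1-\beta_t=\frac{t}{t+1}\geq\frac12$ for $t\geq1$. These give $\frac{\eta_t^2L^2}{1-\beta_t}\leq\frac{1}{2(t+1)}=\frac{\beta_t}{2}$.

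\textbf{Item 3.} I would bound the terms separately, each by a multiple of $\frac{C}{(t+1)^3}$.
\begin{itemize}
\item For the first term, $\frac{\eta_t}{\eta_{t-1}}\beta_{t-1}-\beta_t=\frac{1}{t+1}\bigl(\sqrt{1+1/t}-1\bigr)\in\bigl[0,\frac{1}{2t(t+1)}\bigr]$ by concavity of the square root. Using $t+1\leq 2t$, the first term is then at most $\frac{10C}{t^2(t+1)}\leq\frac{40C}{(t+1)^3}$.
\item The second term equals $\frac{10\Gamma C}{H^2t(t+1)^2}\leq\frac{20\Gamma C}{H^2(t+1)^3}$.
\item The third term is at most $\frac{(512\Lambda^2+2+8B^2)C}{H^2(t+1)^3}=\frac{\Gamma C}{2H^2(t+1)^3}$, using $\sigma^2\leq C$.
\end{itemize}
Summing gives $\bigl(40+\frac{41\Gamma}{2H^2}\bigr)\frac{C}{(t+1)^3}$.

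\textbf{Item 4.} Since $\Gamma/H^2\leq\frac1{16}$, we get $40+\frac{41}{32}=\frac{1321}{32}\leq 64$.

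\textbf{Item 5.} This is the main obstacle. The crude bound $\frac{s^2}{(s+1)^2}\geq 1-\frac2s$ is too weak for small $s$, so I would clear denominators exactly. Set $s=t+1\geq2$ and divide by $256C/s^2$. The claim becomes
\begin{align*}
\Bigl(1+\tfrac{1}{4s}\Bigr)\Bigl(1-\tfrac{11}{4s}+\tfrac{9}{4s^2}\Bigr)+\tfrac{1}{4s}\leq\frac{s^2}{(s+1)^2}.
\end{align*}
Expanding, the left-hand side equals $1-\frac{9}{4s}+\frac{25}{16s^2}+\frac{9}{16s^3}$. After multiplying by $16s^3(s+1)^2$, the inequality reduces to
\begin{align*}
(s+1)^2(16s^3-36s^2+25s+9)\leq16s^5.
\end{align*}
Expanding the left-hand side gives $16s^5-4s^4-31s^3+23s^2+43s+9$. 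So it suffices to show $31s^3\geq23s^2+43s+9$ for $s\geq2$. This holds because $31s^3\geq62s^2$ and $39s^2\geq 78s\geq43s+9$.
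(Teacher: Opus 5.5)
Your proposal is correct and matches the paper's proof item by item. You use the same grouping $512\Lambda^2+2+8B^2=\Gamma/2$ in items 1 and 3, the same bounds $L^2/H^2\le 1/4$, $\Gamma/H^2\le 1/16$ and $1/t\le 2/(t+1)$, and the same reduction of item 5 (with $s=t+1$) to $4s^4+31s^3-23s^2-43s-9\ge 0$. The differences are cosmetic: you bound $\sqrt{1+1/t}-1\le \frac{1}{2t}$ by concavity where the paper rationalizes, and you check the final polynomial by dropping $4s^4$ and using $s\ge 2$, where the paper writes it as $(s-2)(4s^3+39s^2+55s+67)+125$.
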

\begin{proof}
Throughout the proof, we use the notations
\begin{align*}
\Gamma = 1024\Lambda^2+4+16B^2.
\end{align*}
We start from the first statement, because $1-\frac{1}{t+1}=\frac{t}{t+1}$, the left-hand side satisfies
\begin{align*}
&\left(1- \frac{1}{t+1}\right)^2\frac{\Gamma C}{t} +\frac{512\Lambda^2C}{(t+1)^2} + \frac{2(C+4B^2C)}{(t+1)^2} \\
&=\frac{t\Gamma C}{(t+1)^2} +\frac{\Gamma C}{2(t+1)^2} \\
&=\frac{(t+1/2)\Gamma C}{(t+1)^2} \\
&= \frac{\Gamma C}{t+1} -\frac{\Gamma C}{2(t+1)^2},
\end{align*}
which proves \eqref{eq: fc0}.

Then we start the second statement. Using the schedule of $\beta_t$ and $\eta_t$, we have $\beta_t\leq1/2$ and $\eta_t^2L^2\leq\beta_t/4$. Using $\frac{1}{4(1-\beta_t)}\leq\frac12$, is suffices to show,
\begin{align*}
\left(1-\beta_t+\frac{\eta_t^2L^2}{1-\beta_t} \right)
\leq 1-\beta_t+\frac{\beta_t}{4(1-\beta_t)}
\leq \left(1-\frac{\beta_t}{2}\right),
\end{align*}
for every $t\geq1$. This finishes proof of \eqref{eq: fc1}.

We next move to the third statement. We estimate first term of the left-hand side, applying the definition of $\eta_t$ and $\beta_t$ gives,
\begin{align*}
40(t+1) \left(\frac{\eta_t}{\eta_{t-1}}\beta_{t-1}-\beta_t\right)^2C 
&= 40(t+1) \left(\frac{1}{\sqrt{t(t+1)}}-\frac{1}{t+1}\right)^2C \\
&= 40(t+1) \left(\frac{1}{\sqrt{t}(t+1)(\sqrt{t+1}+\sqrt{t})}\right)^2C
\end{align*}
Then, because $\sqrt{t}(\sqrt{t+1}+\sqrt{t})=\sqrt{t(t+1)}+t\geq1$, it implies
\begin{align*}
\frac{1}{\sqrt{t}(t+1)(\sqrt{t+1}+\sqrt{t})}
\leq \frac{1}{(t+1)^2}.
\end{align*}
Therefore, we have
\begin{align*}
40(t+1) \left(\frac{\eta_t}{\eta_{t-1}}\beta_{t-1}-\beta_t\right)^2C 
\leq \frac{40C}{(t+1)^3}.
\end{align*}

For the second term, using the definition of $\eta_t$ and $\alpha_t$,
\begin{align*}
\frac{10(t+1) \eta_t^2\alpha_t^2\Gamma C}{t} 
&= \frac{10\Gamma C}{H^2t(t+1)^2} \\
&\leq \frac{20\Gamma}{H^2}\frac{C}{(t+1)^3},
\end{align*}
where we used $1/t\leq2/(1+t)$ for the last inequality.

For the last term, substituting the $\eta_t$ and $\alpha_t$ yield
\begin{align*}
\eta_t^2 \left(\frac{512\Lambda^2C}{(t+1)^2} + 2\alpha_t^2\big(\sigma^2+4B^2C\big)\right)
&= \frac{1}{H^2(t+1)} \left(\frac{512\Lambda^2C}{(t+1)^2} + \frac{2(\sigma^2+4B^2C)}{(t+1)^2}\right) \\
&\leq \frac{\Gamma}{2H^2}\frac{C}{(t+1)^3},
\end{align*}
where the last inequality follows from $\sigma^2\leq C$.

Combining the three inequality give us
\begin{align*}
\frac{40C}{(t+1)^3} 
+ \frac{20\Gamma}{H^2}\frac{C}{(t+1)^3}
&+ \frac{\Gamma}{2H^2}\frac{C}{(t+1)^3}= \left(40 +\frac{41\Gamma}{2H^2} \right)\frac{C}{(t+1)^3}.
\end{align*}
Which proves the claim.

For the fourth statement, the definition of $H$ gives $H^2 \geq 16\Gamma$. Thus we have $\Gamma/H^2 \leq 1/16$, and consequently 
\begin{align*}
40+\frac{41\Gamma}{2H^2} 
\leq 40+\frac{41}{32}
= \frac{1321}{32}\leq 64.
\end{align*}
This proves the fourth statement.

It remains to prove last statement. Expanding the factor gives,
\begin{align}\label{eq: facexp}
\left(1+\frac{1}{4(t+1)}\right)
\left(\left(1- \frac{3}{2(t+1)} \right)^2+ \frac{1}{4(t+1)} \right) 
= \frac{16(t+1)^3-40(t+1)^2+25(t+1)+9}{16(t+1)^3}.
\end{align}

Subtracting the left-hand side of \eqref{eq: fc4} from its right-hand side yields
\begin{align*}
\frac{256C}{(t+2)^2}
&-\frac{256C}{(t+1)^2}
\left(1+\frac{1}{4(t+1)}\right)
\left(\left(1- \frac{3}{2(t+1)} \right)^2+ \frac{1}{4(t+1)} \right)
-\frac{64C}{(t+1)^3}\\
&= \frac{256}{(t+2)^2}
- \frac{16(16(t+1)^3-36(t+1)^2+25(t+1)+9)}{(t+1)^5} \\
&= \frac{16(4(t+1)^4+31(t+1)^3-23(t+1)^2-43(t+1)-9)}{(t+1)^5(t+2)^2},
\end{align*}
where the second equality uses \eqref{eq: facexp}, and the remaining equalities follow from direct calculation.

The numerator satisfies
\begin{align*}
4(t&+1)^4+31(t+1)^3-23(t+1)^2-43(t+1)-9 \\
&\quad =(t-1)(4(t+1)^3+39(t+1)^2+55(t+1)+67)+125>0 
\end{align*}
for every $t\geq1$. This completes the proof of the last statement.
\end{proof}

\end{document}